\newenvironment{proof}{\noindent\textbf{Proof:}}{\hspace{\stretch{1}}$\square$}
\newtheorem{theorem}{Theorem}[section]
\newtheorem{definition}{Definition}
\newtheorem{proposition}[theorem]{Proposition}
\newtheorem{lemma}[theorem]{Lemma}
\newcommand{\rF}{\mathcal{F}}
\newcommand{\rH}{\mathcal{H}}
\newcommand{\rM}{\mathcal{M}}
\newcommand{\rE}{\mathcal{E}}
\newcommand{\rS}{\mathcal{S}}
\newcommand{\rU}{\mathcal{U}}
\newcommand{\rD}{\mathcal{D}}
\newcommand{\rV}{\mathcal{V}}
\newcommand{\rW}{\mathcal{W}}
\newcommand{\rZ}{\mathcal{Z}}
\font\timesept=cmr7
\newcommand{\CC}{\mathbb{C}}
\newcommand{\EE}{\mathbb{E}}
\newcommand{\NN}{\mathbb{N}}
\newcommand{\PP}{\mathbb{P}}
\newcommand{\RR}{\mathbb{R}}
\def\Rp{\RR^+}
\def\O{\Omega}
\def\wt{\widetilde}
\def\indic{{\mathop{\rm 1\mkern-4mu l}}}
\def\qq{\qquad}
\def\wh{\widehat}
\def\wt{\widetilde}
\def\ol{\overline}
\def\indic{{\mathop{\rm 1\mkern-4mu l}}}
\def\Ran{\mathop{{\rm Ran}}\nolimits}
\def\lan{\langle}
\def\ran{\rangle}
\def\ecarte{\vphantom{\buildrel\bigtriangleup\over =}}
\def\Rp{{\RR^{+}\!}}
\def\ps#1#2{\left\langle #1\, ,\, #2\right\rangle}
\def\[{{\mathord{[\![}}}
\def\]{{\mathord{]\!]}}}
\def\norme#1{\left\| #1\right\|}
\def\normca#1{{\left\| #1\right\|}^2}
\def\ab#1{\left\vert #1\right\vert}
\def\L{\Lambda}
\def\O{\Omega}
\def\D{\Delta}
\def\a{\alpha}
\def\b{\beta}
\def\s{\sigma}
\def\m{\mu}
\def\d{\delta}
\def\e{\varepsilon}
\def\l{\lambda}
\def\o{\omega}
\def\wh{\widehat}
\def\td{\widetilde}
\def\ll{\left[}
\def\rr{\right]}
\def\br{\overline}
\def\to{\rightarrow}
\begin{document}

\title{Complex Obtuse Random Walks\\and their Continuous-Time Limits\footnote{Work supported by ANR project ``HAM-MARK" N${}^\circ$ ANR-09-BLAN-0098-01}}

\author{S. Attal, J. Deschamps and  C. Pellegrini}

\date{}

\maketitle

\begin{abstract}
We study a particular class of complex-valued random variables and their associated random walks: the complex obtuse random variables. They are the generalization to the complex case of the real-valued obtuse random variables which were introduced in \cite{A-E} in order to understand the structure of normal martingales in $\RR^n$.The extension to the complex case is mainly motivated by considerations from Quantum Statistical Mechanics, in particular for the seek of a characterization of those quantum baths acting as classical noises. The  extension of obtuse random variables to the complex case is far from obvious and hides very interesting algebraical structures. We show that complex obtuse random variables are characterized by a 3-tensor which admits certain symmetries which we show to be the exact 3-tensor analogue of the normal character for 2-tensors (i.e. matrices), that is, a necessary and sufficient condition for being diagonalizable in some orthonormal basis.  We discuss the passage to the continuous-time limit for these random walks and show that they converge in distribution to normal martingales in $\CC^N$. We show that the 3-tensor associated to these normal martingales encodes their behavior, in particular the diagonalization directions of the 3-tensor indicate the directions of the space where the martingale behaves like a diffusion and those where it behaves like a Poisson process. We finally prove the convergence, in the continuous-time limit, of the corresponding multiplication operators on the canonical Fock space, with an explicit expression in terms of the associated 3-tensor again.
\end{abstract}

\eject
\tableofcontents
\eject

\section{Introduction and Motivations}

\subsection{Generalities}
Real obtuse random variables are particular random variables which were defined in \cite{A-E} in order to understand the discrete-time analogue of normal martingales in $\RR^n$. They were shown to be deeply connected to the Predictable Representation Property and the Chaotic Representation Property for discrete-time martingales in $\RR^n$. They are kind of minimal, centered and normalized random variables in $\RR^n$ and they exhibit a very interesting underlying algebraical structure. This algebraical structure is carried by a certain natural 3-tensor associated to the random variable. This 3-tensor has exactly the necessary and sufficient symmetries for being diagonalizable in some orthonormal basis (that is, they satisfy the exact extension to 3-tensors of the condition for being real symmetric for 2-tensors). The corresponding orthonormal basis carries the behavior of the associated random walk and in particular of its continuous-time limit. It is shown in \cite{A-E} that, for the continuous-time limit, in the directions associated to the null eigenvalues, the limit process behaves like a diffusion process, while it behaves like a pure jump process in the directions associated to the non-null eigenvalues. In \cite{A-P2} it is concretely shown how the 3-tensors of the discrete-time obtuse random walks converge to the ones of normal martingales in $\RR^n$.

\smallskip
Since this initial work of \cite{A-E} was only motivated by Probability Theory and Stochastic Process considerations, there was no real need for an extension of this notion to the complex case. The need for such an extension has appeared naturally through considerations in Quantum Statistical Mechanics. More precisely, the underlying motivation is to characterize the onset of classical noises emerging from quantum baths, in the so-called model of Repeated Quantum Interactions. 

Repeated quantum interaction models are physical models, introduced and developed in \cite{A-P}, which consist in describing the Hamiltonian dynamics of a quantum system undergoing a sequence of interactions with an  environment made of a chain of identical systems. These
models were developed for they furnish toy models for quantum
dissipative systems, they are at the same time Hamiltonian and
Markovian, they spontaneously give rise to quantum stochastic
differential equations in the continuous time limit.  It has been proved in \cite{B-P} and \cite{BdP} that they constitute a good toy model for a quantum heat bath in some situations and that they can also give an account of the diffusive behavior of an electron in an electric field, when coupled to a heat bath. When adding to each step of the dynamics a measurement of the piece of the environment which has just interacted, we recover all the discrete-time quantum trajectories for quantum systems (\cite{Pel}, \cite{Pel2}, \cite{Pel3}). Physically, this model corresponds exactly to physical experiments such as the ones performed by S. Haroche \emph{et al} on the quantum trajectories of a photon in a cavity (\cite{Har}, \cite{Har2}, \cite{BBB}).

The discrete-time dynamics of these repeated interaction systems, as well as their continuous-time limits, give rise to time evolutions driven by quantum noises coming from the environment. These quantum noises emerging from the environment describe all the possible actions inside the environment (excitation, return to ground state, jumps in between two energy levels, ...). It is a remarkable fact that these quantum noises can also be combined together in order to give rise to classical noises. In discrete-time they give rise to any random walk, in continuous-time they give rise to many well-known stochastic processes among which are all the Levy processes. 

The point is that complex obtuse random variables and their continuous-time limits are the key for understanding what kind of classical noise is appearing at the end from the quantum bath. The 3-tensor helps to read directly from the Hamiltonian which kind of classical noise will be driving the evolution equation. This was our initial motivation for developing the complex theory of obtuse random variables and normal martingales in $\CC^N$.

\smallskip
Surprisingly, the extension of obtuse random variables, obtuse random walks and their continuous-time limits, to the complex case is far from obvious. The algebraical properties of the associated 3-tensors give rise to the same kind of behaviors as in the real case, but, as we shall see in this article, many aspects (such as the diagonalization theorem) become now really non-trivial.

\subsection{Examples}
Let us have here a more detailed discussion on these physical motivations underlying our study. These motivations do not appear anymore in the rest of the article which is devoted entirely to the probabilistic properties of complex obtuse random walks and their continuous-time limits, but we have felt that it could be of interest for the reader to have a clearer picture of the physical motivations which have brought us to consider the complex case extension of obtuse random walks. This part can be skipped by the reader, it has no influence whatsoever on the rest of the article, these physical applications are developed in detail in \cite{ADP}.

Let us illustrate this situation with concrete examples. Consider a quantum system, with state space $\rH_\rS$, in contact with a quantum bath of the form $\rH_\rE=\otimes_{\NN}\,\CC^2$, that is, a spin chain. Let us denote by $a^i_j$, $i,j=0,1$, the usual basis of elementary matrices on $\CC^2$ and by $a^i_j(n)$ the corresponding matrix but acting only on the $n$-th copy of $\CC^2$. The Hamiltonian for the interaction between $\rH_{\rS}$ and one copy of $\CC^2$ is of the typical form
$$
H_{\rm tot}=H_{\rS}\otimes I+I\otimes H_\rE+L\otimes a^0_1+L^*\otimes a^1_0\,.
$$
Assume that the interaction between these two parts lasts for a small amount of time $h$, then the associated unitary evolution operator is $U=e^{-i h H_{\rm tot}}$ which can be decomposed as $U=\sum_{i,j=0}^1 U^i_j\otimes a^i_j$ for some operators $U^i_j$ on $\rH_\rS$. 

The action of the environment (the spin chain) by acting repeatedly of the system $\rH_\rS$, spin by spin, each time for a time duration $h$, gives rises to a time evolution driven by a sequence of unitary operators $(V_n)$ which satisfies (cf \cite{A-P} for details)
$$
V_{n+1}=\sum_{i,j=0}^1 (U^i_j\otimes I)\, V_n\, (I\otimes a^i_j(n+1))\,.
$$
This describes a rather general discrete time evolution for a quantum system and the operators $a^i_j (n)$ here play the role of discrete time quantum noises, they describe all the possible innovations brought by the environment. 

In \cite {A-P} it is shown that if the total Hamiltonian $H_{\rm tot}$ is renormalized under the form
$$
H_{\rm tot}=H_{\rS}\otimes I+I\otimes H_\rE+\frac{1}{\sqrt{h}}\,\left(L\otimes a^0_1+L^*\otimes a^1_0\right)
$$
(this can be understood as follows: if the time duration of the interactions $h$ tends to 0, then then the interaction needs to be strengthen adequately if one wishes to obtain a non-trivial limit)
then the time evolution $(V_{nh})$ converges, when $h$ tends to 0, to a continuous-time unitary evolution $(V_t)$ satisfying an equation of the form
$$
dV_t=K\,V_t\, dt+L\, V_t\, da^*(t)-L^*\, V_t\, da(t)\,,
$$
which is a quantum stochastic differential equation driven by quantum noises $da(t)$ and $da^*(t)$ on some appropriate Fock space. In other words, we obtain a perturbation of a Schr\"odinger equation by some additional quantum noise terms. 

\smallskip
The point now is that in the special case where $L=L^*$ then the discrete-time evolution and its continuous-time limit are actually driven by classical noises, for some of the terms in the evolution equation factorize nicely and make appearing classical noises instead of quantum noises (the noises get grouped in order to furnish a family of commuting self-adjoint operators, that is, a classical stochastic process). Indeed, one can show (cf \cite{A-P2} and \cite{A-D}) that the discrete time evolution can be written under the form of classical random walk on the unitary group $\rU(\rH_{\rS})$:
$$
V_{n+1}=A\,V_n+B\,V_n\, X_{n+1}\,,
$$
where $(X_n)$ is a sequence of i.i.d. symmetric Bernoulli random variables. The continuous time limit, with the same renormalization as above, gives rise to a unitary evolution driven by a classical Brownian motion $(W_t)$:
$$
dV_t=\left(iH-\frac 12L^2\right)\,V_t\, dt+L\,V_t\, dW_t\,.
$$
The equation above is the typical one for the perturbation of a Schr\"odinger equation by means of a Brownian additional term, if one wants the evolution to keep unitary at all times.

This example is a very simple one and belongs to those which were already well-known (cf \cite{A-P2}); they involve real obtuse random variables and real normal martingales. 

\smallskip
The point now is that one can consider plenty of much more complicated examples of a choice for the Hamiltonian $H_{\rm tot}$ which would give rise to classical noises instead of quantum noises. Our motivation was to understand and characterize when such a situation appears and to read on the Hamiltonian which kind of noise is going to drive the dynamics. Let us illustrate this with a more complicated example. Assume now that the environment is made of a chain of 3-level quantum systems, that is, $\rH_\rE=\otimes_\NN\, \CC^3$. For the elementary interaction between the quantum system $\rH_\rS$ and one copy of $\CC^3$ we consider an Hamiltonian of the form
$$
H_{\rm tot}=H\otimes I+ A\otimes\left(
\begin{array}{ccc}
 0 & 5 & 0 \\
 -1+2 i & 0 & 4-2 i \\
 -2+4 i & 0 & 2i
\end{array}
\right)
+B\otimes\left(
\begin{array}{ccc}
 0 & 0 & 5 \\
 -2+4 i & 0 & 2+i\\
 1-2 i & -i & -1+2 i
\end{array}
\right)\,,
$$
which is self-adjoint under the condition $B=-(1/2)(A+(1+2i)A^*)$. 

In this case the quantum dynamics in discrete time happens to be driven by a classical noise too, but this does not appear obviously here! We will understand, with the tools developed in this article, that the resulting discrete time dynamics is of the form
$$
V_{n+1}=A_0\, V_n+A_1\, V_n\, X^1_{n+1}+A_2\, V_n\, X^2_{n+1}\,,
$$
where the random variables $(X^1_n\,,\, X^2_n)$ are i.i.d. in $\CC^2$ taking the values 
$$
v_1=\left(\begin{matrix}i\\1\end{matrix}\right)\ ,\qq 
v_2=\left(\begin{matrix}1\\-1+i\end{matrix}\right)\ ,\qq 
v_3=-\frac15\left(\begin{matrix}3+4i\\1+3i\end{matrix}\right)
$$
with probabilities $p_1=1/3$, $p_2=1/4$ and $p_3=5/{12}$ respectively.

\smallskip
Putting a $1/\sqrt{h}$ normalization factor in front of $A$ and $B$ and taking the limit $h$ goes to 0, we will show in this article, that this gives rise to a continuous time dynamics of the form
$$
dV_t=L_0\, V_t\, dt+ L_1\, V_t\, dZ^1_t+L_2\, V_t\, dZ^2_t\,,
$$
where $(Z^1\,,\, Z^2)$ is a normal martingale in $\CC^2$ given by
$$
\begin{cases}
Z^1_t&=\frac{2+i}{\sqrt{10}}\, W^1_t+\frac{i}{\sqrt{2}}\, W^2_t\\\\
Z^2_t&=\frac{-1+2i}{\sqrt{10}}\, W^1_t+\frac{1}{\sqrt{2}}\, W^2_t\,,
\end{cases}
$$
where $(W^1\,,\, W^2)$ is a 2-dimensional real Brownian motion.

\smallskip We will also see in this article how to produce any kind of example in $\CC^n$ which mixes Brownian parts and Poisson parts. 

The way these random walks and their characteristics are identified, the way the continuous-time limits and their characteristics are identified, are non-trivial and make use of all the tools we  develop along this article: associated doubly-symmetric 3-tensor, diagonalisation of the 3-tensor, probabilistic characteristics of the associated random walk, passage to the limit on the tensor, passage to the limit on the discrete-time martingale, identification of the law of the limit martingale, etc.

\subsection{Structure of the Article}
This article is structured as follows. In Section 2 we introduce the notions of \emph{obtuse systems}, \emph{obtuse random variables} and their associated 3-tensors. We show a kind of uniqueness result and we show that they generate all finitely supported random variables in $\CC^N$. 

In Section 3 we establish the important symmetries shared by the 3-tensors of obtuse random variables and we show one of our main results: these symmetries are the necessary and sufficient conditions for the 3-tensor to be diagonalizable in some orthonormal basis. We show how to recover the real case, which remarkably does not correspond to the real character of the 3-tensor but to a certain supplementary symmetry.

Section 4 is kind of preparatory to the continuous-time limit of complex obtuse random walks. In this section  we show an important connection between complex obtuse random variables and real ones. This connection will be the key for understanding the continuous-time limits. In Section 4 we gather all the results concerning this connection with the real obtuse random variables and its consequences. We recall basic results on real normal martingales and deduce the corresponding ones for the complex normal martingales. In particular we establish what is the complex extension of a \emph{structure equation}. We connect the behavior of the complex normal martingale to the diagonalization of its associated 3-tensor. 

In Section 5 we finally prove our continuous-time convergence theorems. First of all, via the convergence of the tensors, exploiting the results of \cite{Tav}, we prove a convergence in law for the processes. Secondly, in the framework of Fock space approximation by spin chains developed in \cite{Att}, we prove the convergence of the associated multiplication operators, with explicit formulas in terms of quantum noises.

We finally illustrate our results in Section 6 through 2 examples, showing up the different types of behavior.

\section{Complex Obtuse Random Variables}

\subsection{Obtuse Systems}

Let $N\in\NN^*$ be fixed. 
In $\CC^N$, an \emph{obtuse system} is a family of $N+1$ vectors $v_1,\ldots, v_{N+1}$ such that
$$
\ps{v_i}{v_j}=-1
$$
for all $i\not =j$. 
In that case we put
$$
\wh v_i=\left(
\begin{matrix}
 1 \\
 v_i   
\end{matrix}
\right)\in\CC^{N+1}\,,
$$
so that 
$$
\ps{\wh v_i}{\wh v_j}=0
$$
for all $i\not=j$. They then form an orthogonal basis of $\CC^{N+1}$.
We put
$$
p_i=\frac{1}{\normca{\wh v_i}}=\frac 1{1+\normca{v_i}}\,,
$$
for $i=1,\ldots N+1$. 

\begin{lemma}\label{L:proba}
We then have
\begin{equation}\label{E:proba}
\sum_{i=1}^{N+1} p_i=1
\end{equation}
and
\begin{equation}\label{E:centered}
\sum_{i=1}^{N+1} p_i\,v_i=0\,.
\end{equation}
\end{lemma}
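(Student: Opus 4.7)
My plan is to exploit the orthogonal basis $\wh v_1, \ldots, \wh v_{N+1}$ of $\CC^{N+1}$ that the preamble has already provided, and to extract both identities simultaneously by expanding a single well-chosen vector in this basis.

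Concretely, consider the vector $e = (1, 0, \ldots, 0)^T \in \CC^{N+1}$. Since $\{\wh v_i\}_{i=1}^{N+1}$ is an orthogonal basis, $e$ admits the unique expansion
$$
e = \sum_{i=1}^{N+1} \frac{\ps{\wh v_i}{e}}{\normca{\wh v_i}}\,\wh v_i.
$$
A direct computation gives $\ps{\wh v_i}{e} = 1$ for every $i$ (the inner product picks out only the first coordinate, which is $1$), and by definition $1/\normca{\wh v_i} = p_i$. Hence
$$
\left(\begin{matrix} 1 \\ 0 \end{matrix}\right) = e = \sum_{i=1}^{N+1} p_i\,\wh v_i = \sum_{i=1}^{N+1} p_i \left(\begin{matrix} 1 \\ v_i \end{matrix}\right).
$$

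Reading off the first coordinate of this identity yields $\sum_i p_i = 1$, which is (\ref{E:proba}). Reading off the remaining $N$ coordinates yields $\sum_i p_i v_i = 0$, which is (\ref{E:centered}). Both conclusions therefore fall out of a single orthogonal-basis expansion, with no further computation needed.

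I do not foresee any genuine obstacle here; the only point of minor care is the convention on the sesquilinear form $\ps{\cdot}{\cdot}$, but since $e$ has real entries and only the first coordinate of $\wh v_i$ matters for $\ps{\wh v_i}{e}$, the value $1$ is obtained regardless of which variable is conjugate-linear. The whole argument is essentially a reformulation of the fact that the constant-first-coordinate condition in the definition of $\wh v_i$ is exactly what forces $(p_i)$ to be a probability vector that centers the $v_i$'s.
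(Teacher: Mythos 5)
Your argument is correct and is essentially the paper's own proof, viewed in the opposite direction: you expand $e=(1,0,\ldots,0)^T$ in the orthogonal basis $\{\wh v_i\}$, while the paper shows that $\sum_i p_i\,\wh v_i$ and $e$ have the same inner products with all $\wh v_j$; both routes yield the identity $\sum_i p_i\,\wh v_i = e$ and then read off coordinates.
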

\begin{proof}
We have, for all $j$,
$$
\ps{\sum_{i=1}^{N+1} p_i\,\wh v_i}{\wh v_j}=p_j\normca{\wh v_j}=1=\ps{\left(
\begin{matrix}
 1 \\
 0   
\end{matrix}
\right)}{\wh v_j}\,.
$$
As the $\wh v_j$'s form a basis, this means that
$$
\sum_{i=1}^{N+1} p_i\,\wh v_i=\left(
\begin{matrix}
 1 \\
 0   
\end{matrix}
\right)\,.
$$
This implies the two announced equalities.
\end{proof}

\begin{lemma}\label{L:normalized}
We also have
\begin{equation}\label{E:normalized}
\sum_{i=1}^{N+1} p_i\,\vert v_i\rangle\langle v_i\vert=I_{\CC^N}\,.
\end{equation}
\end{lemma}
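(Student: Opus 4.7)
The natural approach is to upgrade the previous lemma by using the fact that $(\wh v_i)_{i=1}^{N+1}$ is an \emph{orthogonal basis} of $\CC^{N+1}$, not merely a spanning family. An orthogonal basis $(\wh v_i)$ with $p_i = 1/\normca{\wh v_i}$ yields the completeness (or Parseval) relation
$$
\sum_{i=1}^{N+1} p_i\,\vert\wh v_i\rangle\langle\wh v_i\vert = I_{\CC^{N+1}}\,,
$$
which I would justify in one line: for any $\wh w\in\CC^{N+1}$, expanding in the orthogonal basis gives $\wh w = \sum_i (\ps{\wh v_i}{\wh w}/\normca{\wh v_i})\,\wh v_i = \sum_i p_i\,\ps{\wh v_i}{\wh w}\,\wh v_i$, which is exactly the action of the operator above on $\wh w$.

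Then I would decompose the rank-one operator $\vert\wh v_i\rangle\langle\wh v_i\vert$ in the block form adapted to the splitting $\CC^{N+1}=\CC\oplus\CC^N$ used in the definition of $\wh v_i$. Writing $\wh v_i=\bigl(\begin{smallmatrix}1\\ v_i\end{smallmatrix}\bigr)$, one has
$$
\vert\wh v_i\rangle\langle\wh v_i\vert =
\begin{pmatrix}
1 & \langle v_i\vert \\
\vert v_i\rangle & \vert v_i\rangle\langle v_i\vert
\end{pmatrix}\,.
$$
Multiplying by $p_i$ and summing, the completeness relation becomes
$$
I_{\CC^{N+1}} =
\begin{pmatrix}
\sum_i p_i & \sum_i p_i\,\langle v_i\vert \\
\sum_i p_i\,\vert v_i\rangle & \sum_i p_i\,\vert v_i\rangle\langle v_i\vert
\end{pmatrix}\,.
$$

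The two identities \eqref{E:proba} and \eqref{E:centered} from Lemma \ref{L:proba} are exactly what is needed to identify the top-left scalar block as $1$ and the two off-diagonal blocks as $0$. Comparing the bottom-right block on each side yields $\sum_i p_i\,\vert v_i\rangle\langle v_i\vert = I_{\CC^N}$, which is the claim.

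There is essentially no obstacle here: the statement is the ``second moment'' part of a resolution of the identity whose ``zeroth and first moment'' parts were just established. The only thing to be careful about is the block decomposition of $\vert\wh v_i\rangle\langle\wh v_i\vert$ and the fact that the completeness relation for an orthogonal (not orthonormal) basis carries the normalization factors $1/\normca{\wh v_i} = p_i$.
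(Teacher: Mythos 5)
Your proof is correct and follows essentially the same route as the paper: both start from the completeness relation $\sum_i p_i\,\vert\wh v_i\rangle\langle\wh v_i\vert = I_{\CC^{N+1}}$ coming from the orthonormal basis $(\sqrt{p_i}\,\wh v_i)$, then peel off the $\CC^N$ block using the identities of Lemma~\ref{L:proba}. The paper writes the decomposition additively as $\wh v_i = u + \wt v_i$ and expands the resulting four sums, whereas you write the same thing as a $2\times 2$ block matrix, but this is only a cosmetic difference.
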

\begin{proof}
As the vectors $(\sqrt{p_i}\,\wh v_i)_{i\in\{1,\ldots,N+1\}}$ form an orthonormal basis of $\CC^{N+1}$ we have
$$
I_{\CC^{N+1}}=\sum_{i=1}^{N+1} p_i\,\vert\wh v_i\rangle\langle\wh v_i\vert\,.
$$
Now put 
$$
u=
\left(
\begin{matrix}
1\\
  0  
\end{matrix}
\right)\qquad\mbox{and}\qquad
\wt v_i=
\left(
\begin{matrix}  0  \\v_i
\end{matrix}
\right)\,,
$$
for all $i=1, \ldots, N+1$. 
We get
\begin{align*}
I_{\CC^{N+1}}&=\sum_{i=1}^{N+1} p_i\,\vert u+\wt v_i\rangle\langle u+\wt v_i\vert\\
&=\sum_{i=1}^{N+1} p_i\,\vert u\rangle\langle u\vert+\sum_{i=1}^{N+1} p_i\,\vert u\rangle\langle \wt v_i\vert+\sum_{i=1}^{N+1} p_i\,\vert\wt v_i\rangle\langle u\vert+\sum_{i=1}^{N+1} p_i\,\vert\wt v_i\rangle\langle\wt v_i\vert\,.
\end{align*}
Using \eqref{E:proba} and \eqref{E:centered}, we get
$$
I_{\CC^{N+1}}=\vert u\rangle\langle u\vert+\sum_{i=1}^{N+1} p_i\,\vert \wt v_i\rangle\langle \wt v_i\vert\,.
$$
In particular we have
$$
\sum_{i=1}^{N+1} p_i\,\vert v_i\rangle\langle v_i\vert=I_{\CC^{N}}\,,
$$
that is, the announced equality.
\end{proof}

\bigskip
Let us consider an example that we shall follow along the article. On $\CC^2$, the 3 vectors
$$
v_1=\left(\begin{matrix}i\\1\end{matrix}\right)\ ,\qq 
v_2=\left(\begin{matrix}1\\-1+i\end{matrix}\right)\ ,\qq 
v_3=-\frac15\left(\begin{matrix}3+4i\\1+3i\end{matrix}\right)
$$
form an obtuse system of $\CC^2$. The associated $p_i$'s are then respectively
$$
p_1=\frac13\ ,\qq p_2=\frac14\ ,\qq p_3=\frac5{12}\,.
$$ 

\subsection{Obtuse Random Variables}

Consider a random variable $X$, with values in $\CC^N$, which can take only $N+1$ different non-null values $v_1,\ldots,v_{N+1}$ with strictly positive probability $p_1,\ldots, p_{N+1}$ respectively. 

We shall denote by $X^1, \ldots, X^N$ the coordinates of $X$ in $\CC^N$. 
We say that $X$ is \emph{centered} if its expectation is $0$, that is, if $\EE[X^i]=0$ for all $i$. We say that $X$ is \emph{normalized} if its covariance matrix is $I$, that is, if
$$
\mbox{cov}(X^i,X^j)=\EE[\ol{X^i}\, X^j]-\EE[\ol{X^i}]\,\EE[X^j]=\delta_{i,j}\,,
$$
for all $i,j=1,\ldots N$. 

We consider the canonical version of $X$, that is, we consider the probability space $(\O, \rF, \PP)$ where $\O=\left\{1, \dots, N+1\right\}$, $\rF$ is the full $\s$-algebra of $\O$, the probability measure $\PP$ is given by $\PP \left(\left\{i\right\}\right)= p_i$ and the random variable $X$ is given by $X(i)=v_i$, for all $i\in\O$. The coordinates of $v_i$ are denoted by $v_i^k$, for $k=1,\ldots, N$, so that $X^k(i)=v^k_i$. 

In the same way as above we put
$$
\wh v_i=\left(
\begin{matrix}
 1 \\
 v_i   
\end{matrix}
\right)\in\CC^{N+1}\,,
$$
for all $i=1,\ldots,N+1$.

We shall also consider the deterministic random variable $X^0$ on $(\O, \rF, P)$, which is always equal to $1$. For $i=0,\ldots, N$ let $\td{X}^i$ be the random variable defined by 
$$
\td{X}^i (j) =  \sqrt{p_j}\, X^i (j)
$$ 
for all $i=0,\ldots, N$ and all $j=1,\ldots, N+1$.

\begin{proposition}\label{P:ORV}
The following assertions are equivalent.

\smallskip\noindent
1) $X$ is centered and normalized.

\smallskip\noindent
2) The $(N+1)\times(N+1)$-matrix $\left(\wt{X}^i(j)\right)_{i,j}$ is a unitary matrix.

\smallskip\noindent
3) The $(N+1)\times(N+1)$-matrix $\left(\sqrt{p_i}\,\wh{v}_i^j\right)_{i,j}$ is a unitary  matrix.

\smallskip\noindent
4) The family $\{v_1, \dots, v_{N+1}\}$ is an obtuse system with 
$$
p_i= \frac{1}{1+\| v_i \|^2}\,,
$$
for all $i=1,\ldots, N+1$.
\end{proposition}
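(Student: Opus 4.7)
The plan is to establish the chain (1) $\Leftrightarrow$ (2) $\Leftrightarrow$ (3) $\Leftrightarrow$ (4), each step being a direct unpacking of the unitarity condition of one of the two matrices, read either as a probabilistic statement about $X$ or as a geometric statement about the $v_i$'s.

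I would start with the central equivalence (2) $\Leftrightarrow$ (3) by showing that the two matrices are transposes of one another. Since $X^0 \equiv 1$ and $X^k(j) = v_j^k$ for $k \geq 1$, the $(i,j)$ entry of $M^{(2)} := (\widetilde{X}^i(j))$ equals $\sqrt{p_j}$ when $i=0$ and $\sqrt{p_j}\, v_j^i$ otherwise, while the $(i,j)$ entry of $M^{(3)} := (\sqrt{p_i}\,\widehat{v}_i^{\,j})$ equals $\sqrt{p_i}$ when $j=0$ and $\sqrt{p_i}\, v_i^j$ otherwise. Thus $M^{(3)}_{ij} = M^{(2)}_{ji}$, and since a matrix is unitary if and only if its transpose is, (2) $\Leftrightarrow$ (3) follows at once.

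For (1) $\Leftrightarrow$ (2), I would compute the Hermitian inner product of rows $i$ and $k$ of $M^{(2)}$:
$$
\sum_{j=1}^{N+1} \overline{\widetilde{X}^i(j)}\, \widetilde{X}^k(j) = \sum_{j=1}^{N+1} p_j\, \overline{X^i(j)}\, X^k(j) = \EE\bigl[\overline{X^i}\, X^k\bigr].
$$
Row-orthonormality then splits into three cases. The case $i=k=0$ gives $\sum_j p_j = 1$, which is automatic. The case $i=0$, $k \geq 1$ gives $\EE[X^k] = 0$, i.e.\ $X$ is centered. The case $i,k \geq 1$, combined with the previous one, yields $\mathrm{cov}(X^i, X^k) = \delta_{ik}$, i.e.\ $X$ is normalized. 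Hence $M^{(2)}$ is unitary if and only if (1) holds.

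For (3) $\Leftrightarrow$ (4), I would compute the Hermitian inner product of rows $i$ and $k$ of $M^{(3)}$, which equals $\sqrt{p_i p_k}\,\langle \widehat{v}_i,\widehat{v}_k\rangle = \sqrt{p_i p_k}\,(1 + \langle v_i, v_k\rangle)$. The off-diagonal vanishing gives $\langle v_i, v_k\rangle = -1$ for $i \neq k$, i.e.\ the obtuse-system condition, while the diagonal equation $p_i(1+\|v_i\|^2)=1$ is precisely the prescribed value of $p_i$. I do not expect any genuine obstacle here; the only points requiring care are respecting the conjugate-linear convention of the inner product when identifying row inner products with covariances, and keeping track of the different index ranges for $M^{(2)}$ (rows $0,\ldots,N$, columns $1,\ldots,N+1$) and $M^{(3)}$ (rows $1,\ldots,N+1$, columns $0,\ldots,N$) that make them transposes rather than equal.
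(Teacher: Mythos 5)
Your proof is correct and follows essentially the same route as the paper's: identify the two matrices in (2) and (3) as transposes of one another, then unpack unitarity of $M^{(2)}$ via row inner products $\EE[\ol{X^i}X^k]$ to get (1), and unitarity of $M^{(3)}$ via $\sqrt{p_ip_k}(1+\ps{v_i}{v_k})$ to get (4). The only cosmetic difference is ordering the equivalences with the transpose observation up front rather than in the middle.
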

\begin{proof}\ 

\smallskip\noindent 
1) $\Rightarrow$ 2): Since the random variable $X$ is centered and normalized, each component $X^i$ has a zero mean and the scalar product between two components $X^i$, $X^j$ is given by the matrix $I$. Hence, for all $i$ in $\left\{1, \dots, N \right\}$, we get
\begin{equation}\label{centered}
\EE \ll X^i \rr=0 \quad\Longleftrightarrow \quad \sum_{k=1}^{N+1}  p_k\, v^i_k=0\,,
\end{equation}
and for all $i,j=1,\ldots N$,
\begin{equation}\label{normalized}
\EE[\ol{X^i}\, X^j ] = \delta_{i,j} \quad\Longleftrightarrow \quad \sum_{k=1}^{N+1}  p_k\, \ol{v^i_k}\, v^j_k = \delta_{i,j}\,.
\end{equation}
Now, using Eqs. (\ref{centered}) and (\ref{normalized}), we get, for all $i,j=1,\ldots,N$
\begin{align*}
\ps{\wt{X}^0}{\wt{X}^0}&=\sum_{k=1}^{N+1}p_k=1\,,\\
\ps{\td{X}^0}{\td{X}^i}& =  \sum_{k=1}^{N+1} \ol{\sqrt{p_k}}\, \sqrt{p_k}\, v_k^i =0\,,\\
\ps{\td{X}^i}{\td{X}^j} &=  \sum_{k=1}^{N+1} \ol{\sqrt{p_k}\, v_k^j}\, \sqrt{p_k} v_k^i = \d_{i,j}\,.
\end{align*}
The unitarity follows immediately.

\smallskip\noindent
2) $\Rightarrow$ 1): Conversely, if the matrix $\left(\td{X}^i(j)\right)_{i,j}$ is unitary, the scalar products of column vectors give the mean $0$ and the covariance $I$ for the random variable $X$.

\smallskip\noindent
2) $\Leftrightarrow$ 3): The matrix $\left(\sqrt{p_j}\, \wh{v}_i^j\right)_{i,j}$ is the transpose matrix of $\left(\td{X}^i(j)\right)_{i,j}$. Therefore, if one of these two matrices is unitary, its transpose matrix is unitary too.

\smallskip\noindent
3) $\Leftrightarrow$ 4): The matrix $\left(\sqrt{p_j}\, \wh{v}^i_j\right)_{i,j}$ is unitary if and only if 
$$
\ps{\sqrt{p_i}\, \wh{v}_i}{\sqrt{p_j}\, \wh{v}_j} = \d_{i,j}\,,
$$
 for all $i,j=1,\ldots,N+1$. On the other hand, the condition  
$\ps{\sqrt{p_i}\,\wh{v}_i}{\sqrt{p_i}\, \wh{v}_i} = 1$ is equivalent to $p_i\, (1 + \| v_i \|^2) =1$, 
whereas the condition
$\ps{\sqrt{p_i}\, \wh{v}_i}{\sqrt{p_j}\, \wh{v}_j} = 0$ is equivalent to $\sqrt{p_i}\, \sqrt{p_j}\,( 1 + \ps{v_i}{ v_j})=0$, that is, $ \ps{v_i}{v_j} =-1 \,.$ This gives the result.
\end{proof}

\begin{definition}
Random variables in $\CC^N$ which take only $N+1$ different values with strictly positive probability, which are centered and normalized, are called \emph{obtuse random variables in $\CC^N$}.
\end{definition}

\subsection{Generic Character of Obtuse Random Variables}

We shall here present several results which show the particular character of obtuse random variables. The idea is that somehow they generate all the finitely supported probability distributions on $\CC^N$.

First of all, we show that obtuse random variables on $\CC^N$ with a prescribed probability distribution $\{p_1,\ldots, p_{N+1}\}$ are essentially unique.

\begin{theorem}\label{YUX}
Let $X$ be an obtuse random variable of $\CC^N$ having $\{p_1,\ldots, p_{N+1}\}$ as associated probabilities. Then the following assertions are equivalent.

\smallskip\noindent
i) The random variable $Y$ is an obtuse random variable on $\CC^N$ with same probabilities $\{p_1,\ldots, p_{N+1}\}$.

\smallskip\noindent 
ii) There exists a unitary operator $U$ on $\CC^N$ such that $Y=UX$.  
\end{theorem}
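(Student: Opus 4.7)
The implication ii)~$\Rightarrow$~i) is the easy direction: if $U$ is unitary on $\CC^N$ and $Y=UX$, then $Y$ takes the values $Uv_1,\ldots,Uv_{N+1}$ with the same probabilities $p_1,\ldots,p_{N+1}$. Since unitarity preserves inner products and norms, $\ps{Uv_i}{Uv_j}=\ps{v_i}{v_j}=-1$ for $i\neq j$ and $1+\|Uv_i\|^2=1+\|v_i\|^2=1/p_i$. By item~4) of Proposition~\ref{P:ORV}, the family $\{Uv_1,\ldots,Uv_{N+1}\}$ is an obtuse system with associated probabilities $p_i$, hence $Y$ is obtuse with probabilities $p_i$.

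For the substantial direction i)~$\Rightarrow$~ii), my plan is to lift the problem to $\CC^{N+1}$ via the $\wh{\cdot}$ construction, build a unitary there, and then descend back to $\CC^N$. Write $Y$ as taking values $w_1,\ldots,w_{N+1}$ with probabilities $p_1,\ldots,p_{N+1}$. Applying item~3) of Proposition~\ref{P:ORV} to both $X$ and $Y$, the two families $(\sqrt{p_i}\,\wh v_i)_i$ and $(\sqrt{p_i}\,\wh w_i)_i$ are orthonormal bases of $\CC^{N+1}$. Consequently, the linear map $\wh U$ on $\CC^{N+1}$ prescribed by $\wh U\,\wh v_i=\wh w_i$ for all $i$ sends one orthonormal basis to another (up to the common scalar factors $\sqrt{p_i}$), and is therefore unitary on $\CC^{N+1}$.

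The key step is then to show that $\wh U$ is block-diagonal and so descends. The proof of Lemma~\ref{L:proba} yields $\sum_i p_i\,\wh v_i=\binom{1}{0}$, and the same identity holds for the $\wh w_i$'s because it depends only on the common probabilities. Hence $\wh U$ fixes the vector $e_0=\binom{1}{0}$, and, being unitary, it also preserves the orthogonal complement $e_0^{\perp}=\{0\}\oplus\CC^N$. Thus $\wh U$ has the block form $\begin{pmatrix}1 & 0\\ 0 & U\end{pmatrix}$ for a unique unitary operator $U$ on $\CC^N$, and writing $\wh U\,\wh v_i=\wh w_i$ in blocks gives exactly $Uv_i=w_i$ for every $i$. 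On the canonical space this reads $(UX)(j)=Uv_j=w_j=Y(j)$, so $Y=UX$.

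The only real subtlety I anticipate is noticing that the vector $\binom{1}{0}$ preserved by $\wh U$ is intrinsic to the common probability weights; this is precisely what makes $\wh U$ decompose into a trivial action on the first coordinate and a genuine unitary $U$ on the remaining $\CC^N$. Everything else reduces to orthonormal-basis bookkeeping once the lift to $\CC^{N+1}$ is made.
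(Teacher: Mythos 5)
Your proof is correct, and for the substantial direction i)~$\Rightarrow$~ii) it follows a genuinely cleaner route than the paper's. Both you and the paper lift to $\CC^{N+1}$ and construct a unitary $\wh U$ (the paper calls it $V$) sending $\wh{v_i}$ to $\wh{w_i}$; the difference lies in how one shows $\wh U$ descends to $\CC^N$. The paper argues at the level of matrix entries: it writes out the relation $V^0_0+\sum_j V^0_j v^j_i=1$ for each $i$, subtracts to get a homogeneous linear system $\sum_j V^0_j(v^j_1-v^j_i)=0$, and then uses the linear independence of the differences $v_1-v_i$ (a consequence of the $\wh v_i$'s being a basis) to conclude $V^0_j=0$, repeating the argument with $V^*$ to kill the column entries. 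Your argument replaces all of that with a single invariant observation: Lemma~\ref{L:proba} gives $\sum_i p_i\,\wh v_i=\binom{1}{0}=\sum_i p_i\,\wh w_i$, so $\wh U$ fixes $e_0$ by linearity; being unitary, it preserves $e_0^\perp=\{0\}\oplus\CC^N$, hence is automatically block-diagonal. This avoids the coordinate bookkeeping and the separate argument for rows and columns, and it also makes transparent exactly which structural property (centeredness encoded in Lemma~\ref{L:proba}) forces the descent. Both are valid; yours is the more economical.
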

\begin{proof}
One direction is obvious. If $Y=UX$, then $\EE[Y]=U\EE[X]=U0=0$ and 
$$
\EE[YY^*]=\EE[UXX^*U^*]=U\EE[XX^*]U^*=UIU^*=I\,.
$$
Hence $Y$ is a centered and normalized random variable on $\CC^N$, taking $N+1$ different values, hence it is an obtuse random variable. The probabilities associated to $Y$ are clearly the same as for $X$.

\smallskip
In the converse direction, let $v_1,\ldots, v_{N+1}$ be the possible values of $X$, associated to the probabilities $p_1,\ldots, p_{N+1}$ respectively. Let $w_1,\ldots, w_{N+1}$ be the ones associated to $Y$. In particular, the vectors 
$$
\wh{v_i}={\sqrt{p_i}}\left(\begin{matrix} 1\\v_i\end{matrix}\right)\,,\ \ i=1,\ldots ,N+1,
$$
form an orthonormal basis of $\CC^{N+1}$. The same holds with the $\wh{w_i}$'s. Hence there exists a unitary operator $V$ on $\CC^{N+1}$ such that $\wh{w_i}=V\wh{v_i}$, for all $i=1,\ldots, N+1$. 

We shall index the coordinates of $\CC^{N+1}$, from 0 to $N$ in order to be compatible with the embedding of $\CC^N$ that we have chosen. In particular we have
$$
V^0_0+\sum_{j=1}^N V^0_j v^j_i=1
$$
for all $i=1,\ldots, N+1$. 
This gives in particular
\begin{equation}\label{V0i}
\sum_{j=1}^N V^0_j(v^j_1-v^j_{i})=0
\end{equation}
for all $i\in\{2,\ldots, N+1\}$. 

As the $\wh{v_i}$'s are linearly independent then so are the $\sqrt{p_1}\wh{v_1}-\sqrt{p_i}\wh{v_i}$, for $i=2, \ldots, N+1$. Furthermore, we have 
$$
\sqrt{p_1}\wh{v_1}-\sqrt{p_i}\wh{v_i}=\left(\begin{matrix} 0\\v_1-v_i\end{matrix}\right)
$$
this means that the $v_1-v_i$'s, for $i=2,\ldots, N+1$, are linearly independent. 

As a consequence the unique solution of the system (\ref{V0i}) is $V^0_j=0$ for all $j=1,\ldots, N$. This implies $V^0_0=1$ obviously. 

The same kind of reasoning applied to the relation $\wh{v_i}=V^*\wh{w_i}$ shows that the column coefficients $V^j_0$, $j=1,\ldots, N$ are also all vanishing. Finally the operator $V$ is of the form
$$
V=\left(\begin{matrix}1&\langle 0\vert\\\vert 0\rangle&U\end{matrix}\right)\,,
$$
for some unitary operator $U$ on $\CC^N$. This gives the result.
\end{proof}

\smallskip
Having proved that uniqueness, we shall now prove that obtuse random variables generate all the other random variables (at least with finite support). First of all, a rather simple remark which shows that the choice of taking $N+1$ different values is the minimal one for centered and normalized random variables in $\CC^N$.

\begin{proposition}
Let $X$ be a centered and normalized random variable in $\CC^d$, taking $n$ different values $v_1,\ldots, v_n$, with probability $p_1,\ldots,p_n$ respectively. Then we must have 
$$
n\geq d+1\,.
$$
\end{proposition}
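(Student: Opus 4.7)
The plan is to exploit exactly the same kind of augmentation $v_i \mapsto \wh v_i = \binom{1}{v_i} \in \CC^{d+1}$ that was used for obtuse systems, but now in a weaker setting where we do not assume orthogonality. The centering and normalization conditions should translate into a resolution of identity in $\CC^{d+1}$, from which a rank count immediately yields $n \geq d+1$.

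More precisely, I would first rewrite the three defining conditions in block form: $\sum_i p_i = 1$ (total mass), $\sum_i p_i v_i = 0$ (centering), and $\sum_i p_i \lvert v_i\rangle\langle v_i\rvert = I_{\CC^d}$ (normalization, exactly as in Lemma \ref{L:normalized}; this uses only that $X$ is centered and has covariance $I$). Then I would compute the $(d+1)\times(d+1)$ block matrix
\begin{equation*}
\sum_{i=1}^n p_i\, \lvert\wh v_i\rangle\langle\wh v_i\rvert = \sum_{i=1}^n p_i \begin{pmatrix} 1 & v_i^* \\ v_i & \lvert v_i\rangle\langle v_i\rvert \end{pmatrix}
\end{equation*}
and observe that the four blocks are respectively $1$, $0$, $0$ and $I_{\CC^d}$, so that the sum equals $I_{\CC^{d+1}}$.

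The conclusion is then a pure linear-algebra observation: the left-hand side is a sum of $n$ rank-one positive operators, hence has rank at most $n$, while the right-hand side has rank $d+1$. Therefore $n\geq d+1$. Equivalently, one can phrase it by saying that $\{\sqrt{p_i}\,\wh v_i\}_{i=1}^n$ is a Parseval frame of $\CC^{d+1}$, and any Parseval frame of $\CC^{d+1}$ has cardinality at least $d+1$.

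There is no real obstacle here — the only thing to be careful about is the convention on the covariance (matching $\mathrm{cov}(X^i,X^j) = \EE[\ol{X^i}X^j]$ with $\lvert v\rangle\langle v\rvert$) so that the bottom-right block is indeed $I_{\CC^d}$ rather than its transpose. The key conceptual point, which is worth emphasizing in the write-up, is that the augmentation trick $v_i \mapsto \wh v_i$ converts ``centered and normalized in $\CC^d$'' into ``tight frame in $\CC^{d+1}$'', and obtuseness is precisely the extremal case $n=d+1$ where this frame becomes an orthogonal basis.
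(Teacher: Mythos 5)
Your proof is correct and is the paper's argument read in transpose: the paper checks that the $d+1$ columns $\wt X^0,\ldots,\wt X^d$ of the $n\times(d+1)$ matrix $M_{ij}=\sqrt{p_i}\,\wh v_i^{\,j}$ are orthonormal in $\CC^n$, whereas you check that its rows yield a Parseval frame, i.e.\ that $\sum_{i=1}^n p_i\,\vert\wh v_i\rangle\langle\wh v_i\vert=I_{\CC^{d+1}}$ --- these are the same identity $M^*M=I_{d+1}$, obtained from the same centering and normalization computations. Both then conclude $n\geq d+1$ by the same dimension count, phrased once as ``an orthonormal $(d+1)$-family cannot live in $\CC^n$ with $n<d+1$'' and once as ``a sum of $n$ rank-one operators has rank at most $n$.''
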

\begin{proof}
Let $X$ be centered and normalized in $\CC^d$, taking the values $v_1,\ldots, v_n$ with probabilities $p_1,\ldots, p_n$ and with $n\leq d$, that is, $n<d+1$. Put 
$$
\wt{X}^0=\left(\begin{matrix} \sqrt{p_1}\\\vdots\\\sqrt{p_n}\end{matrix}\right), \wt{X}^1=\left(\begin{matrix} \sqrt{p_1}v^1_1\\\vdots\\\sqrt{p_n}v^1_n\end{matrix}\right),\ldots, \wt{X}^d=\left(\begin{matrix} \sqrt{p_1}v^d_1\\\vdots\\\sqrt{p_n}v^d_n\end{matrix}\right)\,.
$$
They are $d+1$ vectors of $\CC^n$. We have, for all $i,j=1, \ldots, d$
\begin{align*}
\ps{\wt{X}^0}{\wt{X}^0}&=\sum_{i=1}^n p_i =1\\
\ps{\wt{X}^0}{\wt{X}^i}&=\sum_{k=1}^n p_k v^i_k=\EE[X^i]=0\\
\ps{\wt{X}^i}{\wt{X}^j}&=\sum_{k=1}^n p_k \ol{v^i_k}v^j_k=\EE[\ol{X^i}X^j]=\delta_{i,j}\,.
\end{align*}
The family of $d+1$ vectors $\wt{X}^0, \ldots, \wt{X}^d$ is orthonormal in $\CC^n$. This is impossible if $n<d+1$. 
\end{proof}

We can now state the theorem which shows how general, finitely supported, random variables on $\CC^d$ are generated by the obtuse ones. We concentrate only on centered and normalized random variables, for they obviously generate all the others, up to an affine transform of $\CC^d$. 

\begin{theorem}
Let  $n\geq d+1$ and let $X$ be a centered and normalized random variable in $\CC^d$, taking $n$ different values $v_1,\ldots, v_n$, with probabilities $p_1,\ldots,p_n$ respectively. 

If $Y$ is any obtuse random variable on $\CC^{n-1}$ associated to the probabilities $p_1,\ldots,p_n$, then there exists a partial isometry $A$ from $\CC^{n-1}$ to $\CC^d$ with $\Ran A =\CC^d$ and such  that 
$$
X=AY\,.
$$
\end{theorem}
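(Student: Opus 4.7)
The plan is to build $A$ via the orthonormal expansion of the ``tilded'' vectors associated to $X$ inside the orthonormal basis provided by the obtuse random variable $Y$, using crucially Proposition~\ref{P:ORV} (which makes the $\tilde Y^k$'s a basis of $\CC^n$) and the previous proposition (which makes $\tilde X^0,\dots,\tilde X^d$ orthonormal in $\CC^n$).

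First I would introduce the auxiliary vectors of $\CC^n$
$$
\tilde X^i(j)=\sqrt{p_j}\,X^i(j),\qquad \tilde Y^k(j)=\sqrt{p_j}\,Y^k(j),
$$
with the convention $X^0\equiv Y^0\equiv 1$. By the previous proposition the family $\tilde X^0,\tilde X^1,\dots,\tilde X^d$ is orthonormal in $\CC^n$, and since $Y$ is obtuse in $\CC^{n-1}$ with the same probabilities $p_1,\dots,p_n$, Proposition~\ref{P:ORV} (applied with $N=n-1$) tells us that the family $\tilde Y^0,\tilde Y^1,\dots,\tilde Y^{n-1}$ is an orthonormal basis of $\CC^n$. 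The key observation is that $\tilde X^0$ and $\tilde Y^0$ are both equal to the vector $(\sqrt{p_1},\dots,\sqrt{p_n})^\top$, so $\tilde X^0=\tilde Y^0$ and, by orthonormality, each $\tilde X^i$ (for $i\geq 1$) lies in the orthogonal complement of $\tilde Y^0$, i.e.\ in $\mathrm{span}\{\tilde Y^1,\dots,\tilde Y^{n-1}\}$.

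I would then define the matrix $A=(A^i_k)_{1\le i\le d,\,1\le k\le n-1}$ by the Fourier coefficients
$$
A^i_k=\ps{\tilde Y^k}{\tilde X^i}=\sum_{j=1}^n p_j\,\overline{Y^k(j)}\,X^i(j),
$$
so that $\tilde X^i=\sum_{k=1}^{n-1}A^i_k\,\tilde Y^k$. Dividing componentwise by $\sqrt{p_j}$ gives $X^i(j)=\sum_k A^i_k\,Y^k(j)$ for every $j$, that is, $X=AY$. To check the partial isometry condition, I would compute $AA^*$ on $\CC^d$:
$$
(AA^*)^i_{i'}=\sum_{k=1}^{n-1}A^i_k\,\overline{A^{i'}_k}=\ps{\tilde X^{i'}}{\tilde X^i}=\delta_{i,i'},
$$
where the middle equality is just Parseval applied to the orthogonal expansion in the basis $(\tilde Y^k)$ (the $k=0$ term drops out because $\tilde X^i\perp\tilde Y^0$ for $i\geq 1$) and the last equality is the orthonormality of the $\tilde X^i$'s. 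Thus $AA^*=I_{\CC^d}$, which means $A$ is a coisometry, hence a partial isometry with $\Ran A=\CC^d$.

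There is no real obstacle in this argument; the only subtle point is the bookkeeping of the zeroth coordinate, where one must notice that the ``probability'' direction $\tilde X^0=\tilde Y^0$ is shared by the two orthonormal systems and therefore drops out cleanly, leaving exactly $d$ vectors $\tilde X^i$ to expand in the $n-1$ remaining orthonormal vectors $\tilde Y^k$. This is what forces the linear map between $X$ and $Y$ to exist without any constant term, and what makes the resulting operator a genuine partial isometry rather than just an affine contraction.
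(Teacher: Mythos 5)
Your proof is correct and constructs the same matrix $A$ as the paper, but by a cleaner route. The paper builds $A$ by writing out the $n$ coordinate equations $\sum_k A^i_k w^k_j = v^i_j$, solving the $(n-1)\times(n-1)$ subsystem from $j=1,\dots,n-1$ (which requires the linear independence of $w_1,\dots,w_{n-1}$ in $\CC^{n-1}$, a fact only established later as Proposition~\ref{free}), then verifying by hand that the omitted $j=n$ equation follows from $\EE[X]=\EE[Y]=0$, and finally deriving $AA^*=I_d$ from $\EE[XX^*]=I_d$. You instead lift everything to $\CC^n$ via the tilded vectors, notice that $\widetilde X^0=\widetilde Y^0$ is the shared probability direction, expand the orthonormal family $\widetilde X^1,\dots,\widetilde X^d$ in the orthonormal basis $\widetilde Y^0,\dots,\widetilde Y^{n-1}$ supplied by Proposition~\ref{P:ORV}, and observe that the $k=0$ Fourier coefficient vanishes. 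This yields all $n$ equations $X^i(j)=\sum_k A^i_k Y^k(j)$ at once, with no separate consistency check and no linear-independence lemma, and $AA^*=I_d$ is exactly Parseval. The two arguments rely on the same raw ingredients (centering and normalization of $X$, obtuseness of $Y$, both encoded as orthonormality of the tilded families), but your orthonormal-expansion framing absorbs the paper's three separate verifications into a single step.
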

\begin{proof}
Assume that the obtuse random variable $Y$ takes the values $w_1,\ldots, w_n$ in $\CC^{n-1}$.
We wish to find a $(n-1)\times d$ matrix $A$ such that
\begin{equation}\label{systemn}
A^i_1w^1_j+A^i_2w^2_j+\ldots+A^i_{n-1}w^{n-1}_j=v^i_j
\end{equation}
for all $i=1,\ldots, d$, all $j=1,\ldots, n$. 
In particular, for each fixed $i=1,\ldots, d$, we have the following subsystem of $n-1$ equations with $n-1$ variables $A^i_1, \ldots, A^i_{n-1}$:
\begin{equation}\label{system}
\begin{cases} \sum_{k=1}^{n-1} w^k_1A^i_k =v^i_1&\\
\sum_{k=1}^{n-1} w^k_2A^i_k =v^i_2&\\
\qq\vdots\\
\sum_{k=1}^{n-1} w^k_{n-1}A^i_k =v^i_{n-1}\,.&
\end{cases}
\end{equation}
The vectors 
$$
w_1=\left(\begin{matrix}w^1_1\\\vdots\\w^{n-1}_1\end{matrix}\right),\ldots, 
w_{n-1}=\left(\begin{matrix}w^1_{n-1}\\\vdots\\w^{n-1}_{n-1}\end{matrix}\right)
$$
are linearly independent. Thus so are the vectors
$$
\left(\begin{matrix}w^1_1\\\vdots\\w_{n-1}^1\end{matrix}\right),\ldots, 
\left(\begin{matrix}w_1^{n-1}\\\vdots\\w^{n-1}_{n-1}\end{matrix}\right)\,.
$$
Hence the system (\ref{system}) can be solved and furnishes the coefficients $A^i_k$, $k=1, \ldots, n-1$. 
We have to check that these coefficients are compatible with all the equations of (\ref{system}). Actually, the only  equation from (\ref{systemn}) that we have forgotten in (\ref{system}) is
$$
\sum_k A^i_kw^k_n=v^i_n\,.
$$
But this equation comes easily from the $n-1$ first equations if we sum them after multiplication by $p_j$:
$$
\sum_{j=1}^{n-1} \sum_k A^i_k p_jw^k_j=\sum_{j=1}^{n-1} p_j v^i_j\,.
$$
This gives, using $\EE[X^i]=\EE[Y^i]=0$,
$$
\sum_k A^i_k (-p_nw^k_n)=-p_nv^ i_n\,,
$$
which is the required relation.

\smallskip
We have proved the relation $X=AY$ with $A$ being a linear map from $\CC^{n-1}$ to $\CC^d$. 
The fact that $X$ is normalized can be written as $\EE[XX^*]=I_d$. But 
$$
\EE[XX^*]=\EE[AYY^*A^*]=A\EE[YY^*]A^*=A\,I_n\,A^*=AA^*\,.
$$
Hence $A$ must satisfy $AA^*=I_d$, which is exactly saying that $A$ is a partial isometry with range $\CC^d$. 
\end{proof}

\subsection{Associated 3-Tensors}

Obtuse random variables are naturally associated to some 3-tensors with particular symmetries. This is what we shall prove here.

In this article, a \textit{3-tensor on $\CC^n$} is an element of $(\CC^N)^*\otimes\CC^N\otimes\CC^N$, that is, a linear map from $\CC^N$ to $\CC^N\otimes \CC^N$. Coordinate-wise, it is represented by a collection of coefficients $(S^{ij}_k)_{i,j,k=1,  \dots, n}$. It acts  on $\CC^N$ as 
$$
(S(x))^{ij} = \sum_{k=1}^{n} S^{ij}_k x^k\,.
$$

We shall see below that obtuse random variables on $\CC^N$ have a naturally associated 3-tensor on $\CC^{N+1}$. Note that, because of our notation choice $X^0, X^1, \ldots, X^N$, the 3-tensor is indexed by $\{0,1,\ldots, N\}$ instead of $\{1,\ldots, N+1\}$. 

\begin{proposition}\label{P:STdiscret}
Let $X$ be an obtuse random variable in $\CC^N$. Then there exists a unique 3-tensor $S$ on $\CC^{N+1}$ such that
\begin{equation}\label{E:XX=TX}
X^i\, X^j=\sum_{k=0}^N S^{ij}_k\, X^k\,,
\end{equation}
for all $i,j=0,\ldots, N$. 
This $3$-tensor $S$ is given by
\begin{equation}\label{E:Tijk}
S^{ij}_k=\EE[{X^i}\, X^j\, \ol{X^k}]\,,
\end{equation}
for all $i,j,k=0,\ldots N$.

We also have the relation, for all $i,j=0,\ldots, N$
\begin{equation}\label{E:XbX}
\ol{X^i}\,X^j=\sum_{k=0}^N \ol{S^{ik}_j}\, X^k\,.
\end{equation}
\end{proposition}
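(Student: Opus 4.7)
The plan is to exploit the fact that, on the canonical probability space $(\Omega,\rF,\PP)$ with $\vert\Omega\vert=N+1$, the space of all complex valued functions on $\Omega$ is exactly $(N+1)$-dimensional, and that the family $X^0,X^1,\ldots,X^N$ forms an orthonormal basis of $L^2(\Omega,\rF,\PP)$. Once this is established, everything is just expansion in an orthonormal basis.

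First I would show the orthonormality. The inner product on $L^2(\Omega,\rF,\PP)$ is
$$
\langle f,g\rangle = \EE[\,\ol f\, g\,] = \sum_{j=1}^{N+1} p_j\, \ol{f(j)}\,g(j),
$$
so that $\langle X^i, X^k\rangle = \ps{\wt X^i}{\wt X^k}_{\CC^{N+1}}$ with the vectors $\wt X^i$ defined in Section 2.2. By Proposition \ref{P:ORV}, the matrix $(\wt X^i(j))_{i,j}$ is unitary; its columns (indexed by $j$) being orthonormal is equivalent (by transposition) to its rows (indexed by $i$) being orthonormal, and this is exactly the statement $\EE[\,\ol{X^i}\,X^k\,]=\delta_{i,k}$ for $i,k=0,\ldots,N$. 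Since $L^2(\Omega,\rF,\PP)$ has dimension $N+1$, the family $\{X^0,X^1,\ldots,X^N\}$ is then an orthonormal basis.

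Next, for each fixed pair $(i,j)$, the pointwise product $X^i X^j$ is again a function on $\Omega$, hence an element of $L^2(\Omega,\rF,\PP)$. Expanding in the orthonormal basis gives
$$
X^i X^j = \sum_{k=0}^{N} \langle X^k, X^i X^j\rangle\, X^k = \sum_{k=0}^{N} \EE\bigl[\ol{X^k}\, X^i X^j\bigr]\, X^k,
$$
which is exactly \eqref{E:XX=TX} with the coefficients \eqref{E:Tijk}. Uniqueness of the coefficients is immediate since the $X^k$'s are linearly independent.

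Finally, for \eqref{E:XbX}, I apply the same expansion to $\ol{X^i}\,X^j \in L^2(\Omega,\rF,\PP)$:
$$
\ol{X^i}\,X^j = \sum_{k=0}^N \EE\bigl[\ol{X^k}\,\ol{X^i}\,X^j\bigr]\,X^k = \sum_{k=0}^N \ol{\EE\bigl[X^i\,X^k\,\ol{X^j}\bigr]}\,X^k = \sum_{k=0}^N \ol{S^{ik}_j}\,X^k.
$$
There is no real obstacle here: the substantive input is the orthonormality of $\{X^0,\ldots,X^N\}$ in $L^2$, which is just a repackaging of Proposition \ref{P:ORV}; after that, everything reduces to decomposing a function on an $(N+1)$-point space in an orthonormal basis.
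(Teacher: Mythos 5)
Your proof is correct and takes essentially the same route as the paper's: both arguments rest on the observation that $\{X^0,\ldots,X^N\}$ is an orthonormal basis of the $(N+1)$-dimensional space $L^2(\Omega,\mathcal F,\mathbb P)$, and then simply expand $X^iX^j$ and $\ol{X^i}X^j$ in that basis. The only cosmetic difference is that you spell out the orthonormality by tracing it back to the unitarity in Proposition~\ref{P:ORV}, whereas the paper states it directly from the centered/normalized hypotheses; the content is identical.
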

\begin{proof}
As $X$ is an obtuse random variable, that is, a centered and normalized random variable in $\CC^N$ taking exactly $N+1$ different values, the random variables $\{X^0,X^1,\ldots,X^N\}$ are orthonormal in $L^2(\O,\rF,\PP)$, hence they form an orthonormal basis of $L^2(\O,\rF,\PP)$, for the latter space is $N+1$-dimensional. These random variables being bounded, the products ${X^i}\,X^j$ are still elements of $L^2(\O,\rF,\PP)$, hence they can be written, in a unique way, as linear combinations of the $X^k$'s. As a consequence, there exists a unique 3-tensor $S$ on $\CC^{N+1}$ such that
$$
{X^i}\, X^j=\sum_{k=0}^N S^{ij}_k\, X^k
$$
for all $i,j=0,\ldots,N$. In particular we have
$$
\EE[{X^i}\, X^j\, \ol{X^k}]=\sum_{l=0}^N S^{ij}_l\, \EE[X_l\,\ol{X_k}]=S^{ij}_k\,.
$$
This shows the identity (\ref{E:Tijk}).

Finally, we have, by the orthonormality of the $X^k$'s
$$
\ol{X^i}\, X^j=\sum_{k=0}^N \EE[\ol{X^i}\, X^j\,\ol{X^k}]\, X^k\,,
$$
that is,
$$
\ol{X^i}\, X^j=\sum_{k=0}^N \ol{S^{ik}_j}\, X^k\,,
$$
by (\ref{E:Tijk}). This gives the last identity.
\end{proof}

\bigskip
This 3-tensor $S$ has quite some symmetries, let us detail them.
\begin{proposition}\label{P:symmetries}
Let $S$ be the  3-tensor associated to an obtuse random variable $X$ on $\CC^N$. Then the 3-tensor $S$ satisfies the following relations, for all $i,j,k,l=0,\ldots N$
\begin{equation}\label{E:sym0}
S^{i0}_k=\d_{ik}\,,
\end{equation}
\begin{equation}\label{E:sym1}
S^{ij}_k \ \ \mbox{is symmetric in}\ \ (i,j)\,,
\end{equation}
\begin{equation}\label{E:sym2}
\sum_{m=0}^N {S^{im}_j}\,S^{kl}_m\ \ \mbox{is symmetric in}\ \ (i,k)\,,
\end{equation}
\begin{equation}\label{E:sym3}
\sum_{m=0}^N S^{im}_j\,\ol{S^{lm}_k}\ \ \mbox{is symmetric in}\ \ (i,k)\,.
\end{equation}
\end{proposition}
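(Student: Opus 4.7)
The plan is to establish the four identities largely by using the fact that $\{X^0,X^1,\dots,X^N\}$ is an orthonormal basis of $L^2(\Omega,\rF,\PP)$, together with the commutativity of pointwise multiplication.

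First, (\ref{E:sym0}) follows immediately from (\ref{E:Tijk}): since $X^0\equiv 1$, one has $S^{i0}_k = \EE[X^i \ol{X^k}] = \delta_{ik}$ by orthonormality. Likewise, (\ref{E:sym1}) is immediate from (\ref{E:Tijk}) because $X^i X^j = X^j X^i$, so the formula for $S^{ij}_k$ is visibly symmetric in $(i,j)$.

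For (\ref{E:sym2}), my approach is to exploit associativity and commutativity of the product $X^i X^k X^l$. Expanding $X^i(X^k X^l)$ using (\ref{E:XX=TX}) twice yields
\begin{equation*}
X^i(X^k X^l) = \sum_j \Bigl(\sum_m S^{im}_j\, S^{kl}_m\Bigr) X^j,
\end{equation*}
and an analogous expansion of $X^k(X^i X^l)$ yields $\sum_j (\sum_m S^{km}_j S^{il}_m) X^j$. Since $X^i(X^kX^l)=X^k(X^iX^l)$ and $\{X^j\}$ is linearly independent, the coefficients must coincide, giving exactly the claimed symmetry of $\sum_m S^{im}_j S^{kl}_m$ in $(i,k)$.

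For (\ref{E:sym3}), the cleanest route is via Parseval applied to the orthonormal basis $\{X^m\}_{m=0}^N$. With the convention $\langle f,g\rangle=\EE[\ol f\, g]$, observe that setting $f=X^j\ol{X^i}$ gives $\langle f, X^m\rangle = \EE[\ol{X^j}X^i X^m] = S^{im}_j$, while setting $g=X^k\ol{X^l}$ gives $\langle X^m, g\rangle = \EE[\ol{X^m}\ol{X^l}X^k] = \ol{S^{lm}_k}$ (using (\ref{E:Tijk}) and its complex conjugate). Parseval therefore identifies
\begin{equation*}
\sum_{m=0}^N S^{im}_j\,\ol{S^{lm}_k} \;=\; \langle f,g\rangle \;=\; \EE\bigl[X^i X^k\,\ol{X^j}\,\ol{X^l}\bigr],
\end{equation*}
which is manifestly symmetric in $(i,k)$ since pointwise multiplication commutes. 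The same Parseval trick (with $g=X^kX^l$ instead) would alternatively give $\sum_m S^{im}_jS^{kl}_m = \EE[X^iX^kX^l\ol{X^j}]$ and reproves (\ref{E:sym2}).

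The only real delicate step is (\ref{E:sym3}): one has to match conjugation patterns correctly so that the two Fourier coefficients produce $S^{im}_j$ and $\ol{S^{lm}_k}$ respectively. Once the right $f$ and $g$ are identified, the symmetry drops out for free from commutativity of numerical multiplication, with no further computation required.
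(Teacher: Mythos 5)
Your proof is correct and follows essentially the same approach as the paper: both derive (\ref{E:sym2}) and (\ref{E:sym3}) by identifying the sums with the fourth-order moments $\EE[X^i\ol{X^j}X^kX^l]$ and $\EE[X^iX^k\ol{X^j}\,\ol{X^l}]$, whose symmetry in $(i,k)$ is immediate from commutativity. Your primary argument for (\ref{E:sym2}) via linear independence of the $X^j$'s is a trivially equivalent variant of the paper's step of taking expectations against $\ol{X^j}$, and your Parseval formulation of (\ref{E:sym3}) is the same computation packaged slightly differently.
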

\begin{proof}\ 

\smallskip\noindent
The relation (\ref{E:sym0}) is immediate for
$$
S^{i0}_k=\EE[X^i\,\ol{X^k}]=\d_{ik}\,.
$$

\smallskip\noindent
Equation (\ref{E:sym1}) comes directly from Formula (\ref{E:Tijk}) which shows a clear symmetry in $(i,j)$.

\smallskip\noindent
By (\ref{E:XbX}) we have
$$
X^i\, \ol{X^j}=\sum_{m=0}^N S^{im}_j\, \ol{X^m}\,,
$$
whereas 
$$
X^k\,X^l=\sum_{n=0}^N S^{kl}_n\, X^n\,.
$$
Altogether this gives
$$
\EE\left[{X^i}\, \ol{X^j}\,{X^k}\, {X^l}\right]=\sum_{m=0}^NS^{im}_j\, {S^{kl}_m}\,.
$$
But the left hand side is clearly symmetric in $(i,k)$ and (\ref{E:sym2}) follows.

\smallskip\noindent
In order to prove (\ref{E:sym3}), we write, using (\ref{E:XbX})
$$
{X^i}\, \ol{X^j}=\sum_{m=0}^N {S^{im}_j}\, \ol{X^m}
$$
and
$$
\ol{X^l}X^k=\sum_{n=0}^N \ol{S^{lm}_k}\, X^n\,.
$$
Altogether we get
$$
\EE\left[{X^i}\, \ol{X^j}\,\ol{X^l}X^k\right]=\sum_{m=0}^N{S^{im}_j}\, \ol{S^{lm}_k}\,.
$$
But the left hand side is clearly symmetric in $(i,k)$ and (\ref{E:sym3}) is proved.
\end{proof}

\subsection{Representation of Multiplication Operators}\label{SS:mult}

Let $X$ be an obtuse random variable in $\CC^N$, with associated 3-tensor $S$ and let $(\O,\rF,\PP_S)$ be the canonical space of $X$. Note that we have added the dependency on $S$ for the probability measure $\PP_S$. The reason is that, when changing the obtuse random variable $X$ on $\CC^N$, the canonical space $\O$ and the canonical $\s$-field $\rF$ do not change, only the canonical measure $\PP$ does change.

We have seen that the space $L^2(\O,\rF,\PP_S)$ is a $N+1$-dimensional Hilbert space and that the family $\{{X^0}, {X^1},\ldots, {X^N}\}$ is an orthonormal basis of that space. Hence for every obtuse random variable $X$, with associated 3-tensor $S$, we have a natural unitary operator
$$
\begin{matrix} U_S&:&L^2(\O,\rF,\PP_S)&\longrightarrow&\CC^{N+1}\\
&&{X^i}&\longmapsto&e_i\,,
\end{matrix}
$$
where $\{e_0,\ldots, e_N\}$ is the canonical orthonormal basis of $\CC^{N+1}$. The operator $U_S$ is called the \emph{canonical isomorphism} associated to $X$. 

The interesting point with these isomorphisms $U_S$ is that they canonically transport all the obtuse random variables of $\CC^N$ onto a common canonical space. But the point is that the probabilistic informations concerning the random variable $X$ are not correctly transferred via this isomorphism: all the informations about the law, the independencies, ... are lost when identifying $X^i$ to $e_i$. The only way to recover the probabilistic informations about the $X^i$'s on $\CC^{N+1}$ is to consider the \emph{multiplication operator} by $X^i$, defined as follows.
On the space $L^2(\O,\rF,\PP_S)$, for each $i=0,\ldots, N$, we consider the multiplication operator
$$
\begin{matrix} \rM_{X^i}&:&L^2(\O,\rF,\PP_S)&\longrightarrow&L^2(\O,\rF,\PP_S)\\
&&{Y}&\longmapsto&X^i\,Y\,,
\end{matrix}
$$
These multiplication operators carry all the probabilistic informations on $X$, even through a unitary transform such as $U_S$, for we have, by the usual functional calculus for normal operators
\begin{align*}
\EE[f(X^1,\ldots, X^N)]&=\ps{X_0}{f(\rM_{X^1},\ldots, \rM_{X^N})\, X_0}_{L^2(\O,\rF,\PP_T)}\\
&=\ps{e_0}{U_S\,f(\rM_{X^1},\ldots, \rM_{X^N})\,U_S^*\, e_0}_{\CC^{N+1}}\\
&=\ps{e_0}{f(U_S\,\rM_{X^1}U_S^*,\ldots, U_S\,\rM_{X^N}\,U_S^*)\, e_0}_{\CC^{N+1}}\,.
\end{align*}
On the space $\CC^{N+1}$, with canonical basis $\{e_0,\ldots, e_N\}$ we consider the basic matrices $a^i_j$, for $i,j=0,\ldots, N$ defined by
$$
a^i_j\, e_k=\delta_{i,k}\, e_j\,.
$$
We shall see now that, when carried out on the same canonical space by $U_S$, the obtuse random variables of $\CC^N$ admit a simple and compact matrix representation in terms of their 3-tensor.

\begin{theorem}\label{T:multop}
Let $X$ be an obtuse random variable on $\CC^N$, with associated 3-tensor $S$ and canonical isomorphism $U_S$. Then we have, for all $i,j=0,\ldots, N$
\begin{equation}\label{UXU}
U_S\, \rM_{X^i}\, U_S^*=\sum_{j,k=0}^N S^{ij}_k\, a^j_k\,.
\end{equation}
for all $i=0,\ldots, N$. 

The operator of multiplication by $\ol{X^i}$ is given by
\begin{equation}\label{olXX}
U_S\,\rM_{\ol{X^i}}\, U_S^*=\sum_{j,k=0}^N \ol{S^{ik}_j}\, a^j_k\,.
\end{equation}
\end{theorem}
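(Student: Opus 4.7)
The plan is to verify both identities by evaluating each side on the canonical basis $\{e_0, e_1, \ldots, e_N\}$ of $\CC^{N+1}$, which corresponds under $U_S^*$ to the orthonormal basis $\{X^0, X^1, \ldots, X^N\}$ of $L^2(\O,\rF,\PP_S)$. Since two linear maps on a finite-dimensional space coincide as soon as they agree on a basis, it is enough to compare the images of $e_j$ for each $j=0,\ldots,N$.

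For the first identity, I would start from the fundamental defining relation of the 3-tensor proved in Proposition \ref{P:STdiscret}, namely equation (\ref{E:XX=TX}):
$$
X^i\,X^j=\sum_{k=0}^N S^{ij}_k\, X^k .
$$
Reading this as $\rM_{X^i} X^j = \sum_k S^{ij}_k X^k$ and applying $U_S$ to both sides gives $U_S\,\rM_{X^i}\,U_S^*\,e_j = \sum_k S^{ij}_k\, e_k$. On the other hand, using the defining action $a^{j'}_k\, e_j = \delta_{j',j}\, e_k$, one immediately computes
$$
\Bigl(\sum_{j',k=0}^N S^{ij'}_k\, a^{j'}_k\Bigr) e_j = \sum_{j',k=0}^N S^{ij'}_k\,\delta_{j',j}\,e_k = \sum_{k=0}^N S^{ij}_k\, e_k ,
$$
so the two operators agree on every $e_j$ and hence coincide, proving (\ref{UXU}).

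For the second identity (\ref{olXX}), the argument is identical except that instead of (\ref{E:XX=TX}) I use the companion identity (\ref{E:XbX}) already established in Proposition \ref{P:STdiscret}:
$$
\ol{X^i}\,X^j=\sum_{k=0}^N \ol{S^{ik}_j}\, X^k .
$$
Applying $\rM_{\ol{X^i}}$ to $X^j$ and transporting by $U_S$ yields $U_S\,\rM_{\ol{X^i}}\,U_S^*\,e_j = \sum_k \ol{S^{ik}_j}\, e_k$, while the right-hand side of (\ref{olXX}) evaluated on $e_j$ gives $\sum_{j',k} \ol{S^{ik}_{j'}}\,\delta_{j',j}\,e_k = \sum_k \ol{S^{ik}_j}\, e_k$, again matching term by term.

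There is essentially no obstacle in this proof: everything is a direct consequence of the two formulas (\ref{E:XX=TX}) and (\ref{E:XbX}) combined with the definition of the matrix units $a^j_k$. The only point requiring care is the index bookkeeping, in particular noticing that in the first formula the tensor is contracted on its upper index $j$ (which indexes the summation over matrix units $a^j_k$), whereas in the complex-conjugate formula the summation index is the lower index of $S$, which is why the dummy variable appears as $S^{ik}_{j'}$ rather than $S^{ij'}_k$ on the right-hand side of (\ref{olXX}); this is precisely dictated by relation (\ref{E:XbX}) and is the reason the two formulas look asymmetric.
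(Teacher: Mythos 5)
Your proof is correct and follows essentially the same route as the paper: evaluate both sides on the basis vectors $e_j$, transport via $U_S$, and invoke the defining relations (\ref{E:XX=TX}) and (\ref{E:XbX}) of Proposition~\ref{P:STdiscret} together with the action $a^i_j e_k=\delta_{i,k}e_j$ of the matrix units. You merely spell out the second identity (which the paper dispatches as ``an immediate translation of (\ref{E:XbX})''), and you keep the double sum over $j,k$ explicit where the paper's displayed formula momentarily drops the $j$-summation; both are fine.
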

\begin{proof}
 We have, for any fixed $i\in\{0,\ldots,N\}$, for all $j=0,\ldots, N$
\begin{align*}
U_S\,\rM_{X^i}\, U_S^*\, e_j&=U_S\,\rM_{X^i}\, X^j\\
&=U_S\, X^i\,X^j\\
&=U_S\sum_{k=0}^N S^{ij}_k X^k\\
&=\sum_{k=0}^N S^{ij}_k e_k\,.
\end{align*}
Hence the operator $U_S\,\rM_{X^i}\, U_S^*$ has the same action on the orthonormal basis $\{e_0,\ldots,e_N\}$ as the operator
$$
\sum_{k=0}^N S^{ij}_k \, a^j_k\,.
$$
This proves the representation (\ref{UXU}).

The last identity is just an immediate translation of the relation (\ref{E:XbX}).
\end{proof}

\subsection{Back to the Example}

Let us illustrate the previous subsections with our example. To the obtuse system 
$$
v_1=\left(\begin{matrix}i\\1\end{matrix}\right)\ ,\qq 
v_2=\left(\begin{matrix}1\\-1+i\end{matrix}\right)\ ,\qq 
v_3=-\frac15\left(\begin{matrix}3+4i\\1+3i\end{matrix}\right)
$$
of $\CC^2$ is associated the random variable $X$ on $\CC^2$ which takes the values $v_1,v_2,v_3$ with probability $p_1=1/3$, $p_2=1/4$ and $p_3=5/{12}$ respectively. Then the 3-tensor $S$ associated to $X$ is directly computable.  We present $S$ as a collection of matrices $S^{j}=\left(S^{i\, j}_k\right)_{i,k}$\,, which are then the matrices of multiplication by $X^j$:
\begin{align*}
S^0&=\left(\begin{matrix}1&0&0\\\ecarte 0&1&0\\\ecarte0&0&1\end{matrix}\right)\,,
\qq S^1=\left(\begin{matrix}0&1&0\\\ecarte-\frac15(1-2i)&0&-\frac25(2+i)\\\ecarte
-\frac25(1-2i)&0&\frac15(2+i)\end{matrix}\right)\\
S^2&=\left(\begin{matrix}0&0&1\\\ecarte-\frac25(1-2i)&0&\frac15(2+i)\\\ecarte
\frac15(1-2i)&-i&-\frac15(1-2i)\end{matrix}\right)\,.
\end{align*}
These matrices are not symmetric (we shall see in Subsection \ref{SS:real} what the symmetry of the matrices $S^j$ corresponds to). We recognize the particular form of $S^0$, for it corresponds to $\rM_{X^0}=I$.

\section{Complex Doubly-Symmetric 3-Tensors}

We are going to leave for a moment the obtuse random variables and concentrate on the symmetries we have obtained above. The relation (\ref{E:sym0}) is really specific to obtuse random variables, we shall leave it for a moment. We concentrate on the relation (\ref{E:sym1}), (\ref{E:sym2}) and (\ref{E:sym3}) which have important consequences for the 3-tensor.

\subsection{The Main Diagonalization Theorem}

\begin{definition}
A 3-tensor $S$ on $\CC^{N+1}$ which satisfies (\ref{E:sym1}), (\ref{E:sym2}) and (\ref{E:sym3}) is called a \emph{complex doubly-symmetric 3-tensor on $\CC^{N+1}$}.
\end{definition}

The main result concerning complex doubly-symmetric 3-tensors in $\CC^{N+1}$ is that they are the exact generalization for 3-tensors of normal matrices for 2-tensors: they are exactly those 3-tensors which can be diagonalized in some orthonormal basis of $\CC^{N+1}$.

\begin{definition}
A 3-tensor $S$ on $\CC^{N+1}$ is said to be \emph{diagonalizable in some orthonormal basis} ${(a_m)}_{m=0}^N$ of $\CC^{N+1}$ if there exists complex numbers ${(\l_m)}_{m=0}^N$ such that
\begin{equation}\label{E:eigen1}
S=\sum_{m=0}^N \l_m\,a_m^*\otimes a_m\otimes a_m\,.
\end{equation}
In other words
\begin{equation}\label{E:eigen}
S(x)=\sum_{m=0}^N \l_m \,\ps{a_m}x\, a_m\otimes a_m
\end{equation}
for all $x\in\CC^{N+1}$. 
\end{definition}

Note that, as opposed to the case of 2-tensors (that is, matrices), the ``eigenvalues" $\l_m$ are not completely determined by the representation (\ref{E:eigen}). Indeed, if we put $\wt{a}_m=e^{i\theta_m}\, a_m$ for all $m$, then the $\wt{a}_m$'s still form an orthonormal basis of $\CC^{N+1}$ and we have
$$
S(x)=\sum_{m=1}^N \l_m\, e^{i\theta_m} \,\ps{\wt{a}_m}x\,  \wt{a}_m\otimes\wt{a}_m\,.
$$
Hence the $\l_m$'s are only determined up to a phase; only their modulus is determined by the representation (\ref{E:eigen}).

Actually, there are more natural objects that can be associated to diagonalizable 3-tensors; they are the \emph{orthogonal families in $\CC^N$}. Indeed, if $S$ is diagonalizable as above, for all $m$ such that $\l_m\not =0$ put 
$v_m={\l_m}\, a_m\,.$
The family $\{v_m\,;\ m=1,\ldots, K\}$ is then an orthogonal family in $\CC^{N+1}$ and we have
$$
S(v_m)=\vert v_m\rangle\langle v_m\vert
$$
for all $m$.
In terms of the $v_m$'s, the decomposition (\ref{E:eigen}) of $S$ becomes
\begin{equation}\label{E:eigen2}
S(x)=\sum_{m=1}^K \frac{1}{\normca{v_m}}\, \ps{v_m}x\,  v_m\otimes v_m\,.
\end{equation}
This is the form of diagonalization we shall retain for 3-tensors. Be aware that in the above representation the vectors are orthogonal, but not normalized anymore. Also note that they represent the eigenvectors of $S$ associated only to the non-vanishing eigenvalues of $S$.

We can now state the main theorem.

\begin{theorem}\label{T:diag}
A 3-tensor $S$ on $\CC^{N+1}$ is diagonalizable in some orthonormal basis if and only if it is doubly-symmetric. 

More precisely, the formulas
$$
\rV=\left\{v \in \CC^{N+1}\setminus\{0\}\,;\ S(v)= v\otimes v \right\}\,,
$$
and
$$
S(x)= \sum_{v \in \rV} \frac{1}{\normca v}\,  \ps{v}{x} \, v\otimes v\,,
$$
establish a bijection between the set of complex doubly-symmetric 3-tensors $S$ and the set of orthogonal systems $\rV$ in $\CC^{N+1}$.
\end{theorem}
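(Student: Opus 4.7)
The proof naturally splits into the easy direction (diagonal implies doubly-symmetric), the hard direction (the converse), and a formal verification of the bijection.

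For the easy direction, I would substitute $S^{ij}_k = \sum_m \lambda_m \overline{(a_m)_k}(a_m)_i(a_m)_j$ into each symmetry condition and collapse the resulting double sums using the orthonormality relation $\sum_m (a_m)_\alpha \overline{(a_m)_\beta} = \delta_{\alpha\beta}$. One obtains
\[
\sum_\alpha S^{i\alpha}_j S^{kl}_\alpha \;=\; \sum_m \lambda_m^2\,\overline{(a_m)_j}(a_m)_i(a_m)_k(a_m)_l,
\]
and the analogous expression with $|\lambda_m|^2$ in place of $\lambda_m^2$ for $\sum_\alpha S^{i\alpha}_j \overline{S^{l\alpha}_k}$, both manifestly symmetric in $(i,k)$; symmetry in $(i,j)$ is immediate. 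The same computation with $\lambda_m = \|v_m\|$ and $a_m = v_m/\|v_m\|$ shows that the formula in the theorem produces a doubly-symmetric tensor from any orthogonal system $\rV$, and a direct argument (along the lines already used in Section 2) shows that $\rV_S = \rV$, so the map $\rV \mapsto S$ is injective.

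For the hard direction, the plan is to reconstruct $\rV$ from the algebraic structure on $\CC^{N+1}$ induced by $S$. Using (sym1) and (sym2), the formula $e_i \cdot e_j := \sum_k S^{ij}_k e_k$ defines a commutative, associative algebra structure on $\CC^{N+1}$: commutativity is immediate from (sym1), and associativity reduces, after one application of (sym1), to (sym2). The crucial observation is that a nonzero vector $v$ satisfies $S(v) = v \otimes v$ if and only if the linear functional $\phi_v(e_k) := v^k$ is a character (algebra homomorphism $\CC^{N+1} \to \CC$) of this algebra, so $\rV$ is exactly the set of nonzero characters. The role of (sym3) is to provide a Frobenius-type compatibility between the product and the Hermitian inner product: introducing the left-multiplication operators $L_a y := a \cdot y$, one checks that (sym3) is equivalent, after direct translation, to the identity
\[
a \cdot (L_c^* b) \;=\; b \cdot (L_c^* a) \qquad \text{for all } a,b,c \in \CC^{N+1}.
\]
Combining this identity with the multiplicativity of the $\phi_v$'s, I would then prove (i) that distinct vectors of $\rV$ are pairwise orthogonal in $\CC^{N+1}$, and (ii) that the residual $S' := S - \sum_{v \in \rV} \|v\|^{-2}\,\ps{v}{\cdot}\,v \otimes v$ is still a doubly-symmetric 3-tensor with $\rV_{S'} = \emptyset$.

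The main obstacle is the remaining step: showing that every nonzero doubly-symmetric 3-tensor admits at least one character, so that $\rV_{S'} = \emptyset$ forces $S' = 0$. I expect to establish this by a variational argument: maximize a quartic functional such as $v \mapsto |\ps{v \otimes v}{S(v)}|^2/\|v\|^6$ on the unit sphere of $\CC^{N+1}$ (the maximum is attained by compactness) and use the Lagrange-multiplier condition at a maximizer, together with (sym1)--(sym3), to extract a nonzero vector $v$ whose critical-point equation reduces, after rescaling, to $S(v) = v \otimes v$. Once existence of a single character has been secured, an induction on $N$ applied to the residual $S'$ completes the proof and simultaneously establishes the bijection.
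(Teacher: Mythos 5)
Your easy direction---that the formula produces a doubly-symmetric tensor and that $\rV$ is then recovered as the set of nonzero $v$ with $S(v)=v\otimes v$---is correct and essentially coincides with Steps 1 and 2 of the paper's proof. For the hard direction you take a genuinely different route. The paper argues by pure linear algebra: it uses the symmetries to show that the matrices $\ol{S_i}\,S_j$ pairwise commute, invokes the simultaneous Takagi factorization theorem of Hong and Horn to write $S_k=U\,D_k\,\ol U^{-1}$, and then uses (\ref{E:sym3}) together with a Cauchy--Schwarz equality case to force each diagonal vector $\l^m$ to be proportional to $\ol{a_m}$. Your reformulation---a commutative associative algebra on $\CC^{N+1}$ (commutativity from (\ref{E:sym1}), associativity from (\ref{E:sym1})+(\ref{E:sym2})), with (\ref{E:sym3}) encoding a Frobenius-type compatibility with the Hermitian form and $\rV$ read off as the set of characters---is a correct and rather elegant translation, recasting the theorem as a spectral theorem for finite-dimensional commutative algebras with a compatible Hermitian product.

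The argument nonetheless has a real gap, precisely where you flag it. The existence of at least one character for a nonzero doubly-symmetric $S$ is the crux (in the paper this is exactly what simultaneous Takagi factorization delivers), and the variational sketch does not clearly yield it. Already on $\CC^1$ with $S^{11}_1=s$, the functional $v\mapsto\ab{\ps{v\otimes v}{S(v)}}^2/\norme v^6$ is identically $\ab s^2$ on the unit circle, so the Lagrange condition at a maximizer is vacuous and does not, by itself, force the rescaled maximizer to be the character $v=s$; the first-order conditions therefore need supplementary input that the sketch does not supply. Moreover, the two auxiliary claims needed for the induction---pairwise orthogonality of distinct characters, and double-symmetry of the residual $S'$---are asserted but not established, and neither is routine: checking (\ref{E:sym2}) for $S'$ leaves an uncancelled cross-term $v^k\sum_m S^{im}_j\,\ol{v^m}$, whose symmetry in $(i,k)$ amounts to showing that the left-multiplication operator by $\ol v$ has one-dimensional range spanned by $v$. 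This does not follow from the character equation alone (which constrains a contraction over the \emph{lower} index of $S$, whereas $L_{\ol v}$ contracts an \emph{upper} one) and would need its own derivation from the Frobenius identity. As written, the proposal is a promising programme rather than a complete proof.
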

\begin{proof}
Firste step: let $\rV=\{v_m\,;\ m=1,\ldots, K\}$ be an orthogonal familly in $\CC^{N+1}\setminus\{0\}$. Put
$$
S^{ij}_k = \sum_{m=1}^{K} \frac{1}{\normca{v_m}}\,{v_m^i}\, v_m^j\, \ol{v_m^k}\,,
$$
for all $i,j,k=0,\ldots, N$. We shall check that $S$ is a complex doubly-symmetric 3-tensor in $\CC^N$. 
The symmetry of $S^{ij}_k$ in $(i,j)$ is obvious from the definition. This gives (\ref{E:sym1}).

We have
\begin{align*}
\sum_{m=0}^N {S^{im}_j}\,{S^{kl}_m}&=\sum_{m=0}^N\sum_{n,p=1}^{K} \frac{1}{\normca{v_n}}\,\frac{1}{\normca{v_p}}\,{v_n^i}\, \ol{v_n^j}\, {v_n^m}\,\ol{v_p^m}\, v_p^k\, {v_p^l}\\
&=\sum_{n,p=1}^{K} \frac{1}{\normca{v_n}}\,\frac{1}{\normca{v_p}}\,{v_n^i}\, \ol{v_n^j}\, \ps{v_p}{v_n}\, v_p^k\, {v_p^l}\\
&=\sum_{n=1}^{K} \frac{1}{\normca{v_n}}\,{v_n^i}\, \ol{v_n^j}\, v_n^k\, {v_n^l}
\end{align*}
and the symmetry in $(i,k)$ is obvious. This gives (\ref{E:sym2}).

\smallskip\noindent
We have
\begin{align*}
\sum_{m=0}^N S^{im}_j\,\ol{S^{lm}_k}&=\sum_{m=0}^N\sum_{n,p=1}^{K}\frac{1}{\normca{v_n}}\,\frac{1}{\normca{v_p}}\, {v_n^i}\, v_n^m\, \ol{v_n^j}\,\ol{v_p^l}\, \ol{v_p^m}\, {v_p^k}\\
&=\sum_{n,p=1}^{K} \frac{1}{\normca{v_n}}\,\frac{1}{\normca{v_p}}\,{v_n^i}\, \ol{v_n^j}\, \ps{v_n}{v_m}\,{v_p^k}\, \ol{v_p^l}\\
&=\sum_{n=1}^{K} \frac{1}{\normca{v_n}}\,{v_n^i}\, \ol{v_n^j}\,v_n^k\, \ol{v_n^l}
\end{align*}
and the symmetry in $(i,k)$ is obvious. This gives (\ref{E:sym3}).

\smallskip\noindent 
We have proved that the formula
\begin{equation}\label{E:Tyyy}
S(x)= \sum_{v \in \rV}\frac 1{\normca v}\, \ps v x\,  v\otimes v
\end{equation}
defines a complex doubly-symmetric 3-tensor if $\rV$ is any family of (non-vanishing) orthogonal vectors. 

\smallskip
Second step: now given a complex doubly-symmetric 3-tensor $S$ of the form (\ref{E:Tyyy}), we shall prove that the set $\rV$ coincides with the set
$$
\wh\rV=\{v\in\CC^N\setminus\{0\}\,;\ S(v)= v\otimes v\}\,.
$$
Clearly, if $y\in \rV$ we have by (\ref{E:Tyyy})
$$
S(y)=y\otimes y\,.
$$
This proves that $\rV\subset \wh\rV$. Now, let $v\in \wh\rV$. On one side we have
$$
S(v)=v\otimes v\,,
$$
on the other side we have
$$
S(v)=\sum_{y\in \rV} \frac 1{\normca y}\, \ps y v\, y\otimes y\,.
$$
In particular, applying $\langle y\vert \in\rS^*$ to both sides, we get
$$
\ps yv\, v=\ps yv\, y
$$
and thus either $v$ is orthogonal to $y$ or $v=y$. This proves that $v$ is one of the elements $y$ of $\rV$, for it were orthogonal to all the $y\in\rS$ we would get $v\otimes v=S(v)=0$ and $v$ would be the null vector. 

We have proved that $\rV$ coincides with the set
$$
\{v\in\CC^N\setminus\{0\}\,;\ S(v)=\vert v\rangle\langle v\vert\}\,.
$$

Third step: now we shall prove that all complex doubly-symmetric 3-tensors $S$ on $\CC^{N+1}$ are diagonalizable in some orthonormal basis. 
The property (\ref{E:sym1}) indicates that the matrices
$$
S_k=(S^{ij}_k)_{i,j=1,\ldots, N}
$$
are symmetric. But, as they are complex-valued matrices, this does not imply any property of diagonalization. Rather we have the following theorem (\cite{H-H}).

\begin{theorem}[Takagi Factorization]\label{T:takagi}
Let M be a complex symmetric matrix, there exist a unitary $U$ matrix and a diagonal matrix $D$ such that
\begin{equation}
M=UDU^T=UD(\br U)^{-1}.
\end{equation}
\end{theorem}

\smallskip
Secondly, we shall need to simultaneously ``factorize'' the $S_k$'s as above. We shall make use of the following criteria (same reference).

\begin{theorem}[Simultaneous Takagi factorization]\label{thm:STF}
Let $\mathcal F = \left\{A_i\,;\ i \in \mathcal J \right\}$ be a family of complex symmetric matrices in $\mathcal M_n(\CC)$. Let $\mathcal G = \left\{\ol{A_i}\, A_j\,;\  i,j \in \mathcal J \right\}$. Then there exists a unitary matrix $U$ such that, for all $i$ in $\mathcal J$, the matrix $UA_i U^T$ is diagonal if and only if the family $\mathcal G$ is commuting.
\end{theorem}

This is the first part of Step three: proving that in our case the matrices $\ol{S_i}\, S_j$ commute. 
Using the 3 symmetry properties of $S$ we get
\begin{align*}
\left(\ol{S_i}\, S_j\, \ol{S_k}\,S_l\right)_{m,n}
&=\sum_{x,y,z=0}^N \ol{S^{mx}_i}\,S^{xy}_j\, \ol{S^{yz}_k}\, S^{zn}_l\\
&=\sum_{x,y,z=0}^N \ol{S^{mx}_y}\,S^{xi}_j\, \ol{S^{yz}_k}\, S^{zn}_l\\
&=\sum_{x,y,z=0}^N \ol{S^{zx}_y}\,S^{xi}_j\, \ol{S^{ym}_k}\, S^{zn}_l\\
&=\sum_{x,y,z=0}^N \ol{S^{zx}_n}\,S^{xi}_j\, \ol{S^{ym}_k}\, S^{zy}_l\\
&=\sum_{x,y,z=0}^N \ol{S^{zx}_i}\,S^{xn}_j\, \ol{S^{ym}_k}\, S^{zy}_l\\
&=\sum_{x,y,z=0}^N \ol{S^{my}_k}\, S^{yz}_l\,\ol{S^{zx}_i}\,S^{xn}_j \\
&=\left(\ol{S_k}\, S_l\, \ol{S_i}\,S_j\right)_{m,n}\,.
\end{align*}
This proves that $\ol{S_i}\, S_j\, \ol{S_k}\,S_l=\ol{S_k}\, S_l\, \ol{S_i}\, S_j$.
The family $\big\{\ol{S_i} S_j, i,j = 1, \cdots, N \big\}$ is commuting. Thus, by Theorem \ref{thm:STF}, the matrices $S_k$ can be simultaneously Takagi-factorized. There exists then a unitary matrix $U=(u^{ij})_{i,j = 0, \cdots, N}$ such that, for all $k$ in $\left\{0, \cdots, N \right\}$, 
\begin{equation}\label{diago}
S_k = U \,D_k\, \br U ^{-1}\,,
\end{equation}
where the matrix $D_k$ is a diagonal matrix, $D_k= diag(\l_k^1, \cdots, \l_k^N)$.
Thus, the coefficient $S^{ij}_k$ can be written as
$$
S^{ij}_k = \sum_{m=0}^N \l_k^m\, u^{im}\, u^{jm}\,.
$$
Let us denote by $a_m$ the $m$th column vector of $U$, that is, $a_m=({u^{lm}})_{l=0,\cdots, N}$. Moreover, we denote by $\l^m$ the vector of $\l^m_k$, for $k=0, \cdots, N$. Since the matrix $U$ is unitary, the vectors $a_m$ form an orthonormal basis of $\CC^{N+1}$. We have 
$$
S^{ij}_k = \sum_{m=0}^N {a^i_m}\, \l_k^m\, {a^k_m}\,.
$$
Our aim now is to prove that $\l_m$ is proportional to $\ol{a_m}$. To this end, we shall use the symmetry properties of $S$. From the simultaneous reduction (\ref{diago}), we get
$$
\ol{S_j}\, S_q= \ol U\, \ol{D_j}\, D_q\, {}^tU\,,
$$
where ${}^tU$ is the transpose matrix of $U$.
Thus, we have
$$
(\ol{S_j}\, S_q)_{i,r}=\sum_{m=0}^N \ol{S^{im}_j}\, S^{mr}_q=\sum_{m=0}^N \ol{a^i_m}\, \ol{\lambda^m_j}\, \lambda^m_q \,{a^r_m}\,.
$$
In particular we have, for all $p\in\{0,\ldots, N\}$
$$
\sum_{i,j,q,r=0}^N(\ol{S_j}\, S_q)_{i,r}\, {a^i_p}\, {\l_j^p}\, \ol{\l_q^p}\, \ol{a^r_p}= \sum_{m=0}^N \ps{{a_m}}{a_p}\, \ps{\l^m}{\l^p}\,\ps{\l^p}{\l^m}\, \ps{a_p}{{a_m}}=\norme{\l^p}^4\,.
$$
But applying the symmetry (\ref{E:sym3}) this is also equal to
\begin{align*}
\sum_{i,j,q,r=0}^N\sum_{m=0}^N \ol{a^q_m}\, \ol{\lambda^m_j}\, \lambda^m_i \,{a^r_m}\, {a^i_p}\, {\l_j^p}\, \ol{\l_q^p}\, \ol{a^r_p}&= \sum_{m=0}^N \ps{\ol{a_p}}{\l^m}\, \ps{\l^m}{\l^p}\,\ps{a_m}{\ol{\l^p}}\, \ps{a_p}{a_m}\\
&=\ab{\ps{\ol{a_p}}{\l^p}}^2\,\normca{\l^p}\,.
\end{align*}
This gives
$$
\ab{\ps{\ol{a_p}}{\l_p}}=\norme{\l_p}=\norme{\ol{a_p}}\,\norme{\l_p}\,.
$$
This is a case of equality in Cauchy-Schwartz inequality, hence there exists $\m_p\in\CC$ such that $\l^p=\m_p\, \ol{a_p}$, for all $p=0,\ldots,N$. This way, the 3-tensor $T$ can be written as
\begin{equation}
S^{ij}_k = \sum_{m=0}^N \mu_m \,{a^i_m}\, a^j_m\, \ol{a^k_m}\,.
\end{equation}
In other words
$$
S(x)=\sum_{m=0}^N \m_m\, \ps{a_m}x\, a_m\otimes a_m\,.
$$
We have obtained the orthonormal diagonalization of $S$. The proof is complete.
\end{proof}

\subsection{Back to Obtuse Random Variables}

The theorem above is a general diagonalization theorem for 3-tensors. For the moment it does not take into account the relation (\ref{E:sym0}). When we make it enter into the game, we see the obtuse systems appearing.

\begin{theorem}
Let $S$ be a doubly-symmetric 3-tensor on $\CC^{N+1}$ satisfying also the relation
$$
S^{i0}_k=\delta_{ik}
$$
for all $i,k=0,\ldots, N$.
Then the orthogonal system $\rV$ such that
\begin{equation}\label{E:SxObtuse}
S(x)=\sum_{v\in \rV} \frac 1{\norme{v}^2} \ps{v}{x}\,v\otimes v
\end{equation}
is made of exactly $N+1$ vectors $v_1, \ldots, v_{N+1}$, all of them satisfying $v^0_i=1$. In particular the family of $N+1$ vectors of $\CC^{N}$, obtained by restricting the $v_i$'s to their $N$ last coordinates, forms an obtuse system in $\CC^{N}$.
\end{theorem}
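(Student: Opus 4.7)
The plan is to start from the diagonalization supplied by Theorem \ref{T:diag}, which already gives us an orthogonal family $\rV=\{v_1,\ldots,v_K\}$ in $\CC^{N+1}\setminus\{0\}$ such that
$$
S^{ij}_k=\sum_{m=1}^{K}\frac{1}{\normca{v_m}}\,v_m^i\,v_m^j\,\ol{v_m^k}.
$$
The only extra ingredient is the relation $S^{i0}_k=\delta_{ik}$, and the whole proof consists in feeding this relation into the formula above.

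First, I would specialize the identity by setting $j=0$: this yields
$$
\delta_{ik}=\sum_{m=1}^{K}\frac{v_m^0}{\normca{v_m}}\,v_m^i\,\ol{v_m^k},
$$
which I read as the operator identity $\sum_{m=1}^K \beta_m\,|u_m\rangle\langle u_m|=I_{\CC^{N+1}}$, where $u_m=v_m/\norme{v_m}$ and $\beta_m=v_m^0$. Since the $u_m$ form an orthonormal family in $\CC^{N+1}$, evaluating both sides on $u_\ell$ gives $\beta_\ell\,u_\ell=u_\ell$, hence $\beta_\ell=v_\ell^0=1$ for every $\ell$. Moreover, the rank of the left-hand side is at most $K$, so the equality with the full identity forces $K\geq N+1$; combined with the fact that $\{u_1,\ldots,u_K\}$ is an orthonormal set in $\CC^{N+1}$, this yields $K=N+1$ exactly.

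At this point the $v_m$ are $N+1$ orthogonal vectors of $\CC^{N+1}$, all of the form $\wh{v}_m=\binom{1}{v_m}$ where I now (re)use the same symbol $v_m$ for the restriction to coordinates $1,\ldots,N$. Expanding the orthogonality relations in $\CC^{N+1}$ then gives, for $i\neq j$,
$$
0=\ps{\wh{v}_i}{\wh{v}_j}=1+\ps{v_i}{v_j},
$$
so $\ps{v_i}{v_j}=-1$. This is exactly the defining property of an obtuse system in $\CC^N$, which concludes the proof.

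The only part that required any real thought is the use of $S^{i0}_k=\delta_{ik}$ to simultaneously force the cardinality to be $N+1$ and each $v_m^0$ to equal $1$; the rest is bookkeeping from Theorem \ref{T:diag} and the definition of obtuse systems. I do not anticipate any obstacle beyond correctly rewriting the partial-identity equation as a spectral decomposition of $I_{\CC^{N+1}}$.
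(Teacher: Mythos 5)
Your proposal is correct and follows essentially the same route as the paper: specialize $j=0$ in the diagonalization $S^{ij}_k=\sum_m\|v_m\|^{-2}v_m^i v_m^j\ol{v_m^k}$, read the resulting identity $\delta_{ik}=\sum_m v_m^0\,(u_m)^i\ol{(u_m)^k}$ as a spectral decomposition $\sum_m v_m^0\,\vert u_m\rangle\langle u_m\vert=I_{\CC^{N+1}}$ with $u_m=v_m/\|v_m\|$ orthonormal, and deduce simultaneously that $K=N+1$ and $v_m^0=1$. The only difference is that you spell out the evaluation on $u_\ell$ and the rank bound explicitly where the paper simply asserts the conclusion from ``spectral decomposition of the identity,'' and you write out the passage to the obtuse system rather than referring back to the discussion at the start of Section 2; this is good practice but not a different argument.
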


\begin{proof}
First assume that $\rV=\{v_1,\ldots, v_K\}$. By hypothesis, we have
$$
S^{ij}_k=\sum_{m=1}^K \frac 1{\normca{v_m}}\, {v^i_m}\, v^j_m\, \ol{v^k_m}\,,
$$
for all $i,j,k=0,\ldots, N$.
With hypothesis (\ref{E:sym0}) we have in particular
$$
S^{i0}_k=\sum_{m=1}^K \frac 1{\normca{v_m}}\, {v^i_m}\, v^0_m\, \ol{v^k_m}=\delta_{ik}
$$
for all $i,k=0,\ldots, N$.

Consider the orthonormal family of $\CC^{N+1}$ made of the vectors $e_m=v_m/\norme{v_m}$. We have obtained above the relation
$$
\sum_{m=0}^K {v^0_m}\, \vert e_m\rangle\langle e_m\vert =I
$$
as matrices acting on $\CC^{N+1}$. The above is thus a spectral decomposition of the identity matrix, this implies that the $e_m$'s are exactly $N+1$ vectors and that all the ${v^0_m}$ are equal to 1.

This proves the first part of the theorem. The last part concerning obtuse systems is now obvious and was already noticed when we have introduced obtuse systems.
\end{proof}

\bigskip
In particular we have proved the following theorem. 

\begin{theorem}
The set of doubly-symmetric 3-tensors $S$ on $\CC^{N+1}$ which satisfy also the relation
$$
S^{i0}_k=\delta_{ik}
$$
for all $i,k=0,\ldots, N$, is in bijection with the set of obtuse random variables $X$ on $\CC^N$. 
The bijection is described by the following, with the convention $X^0=\indic$:

\smallskip\noindent
-- The random variable $X$ is the only random variable satisfying
$$
{X^i}X^j=\sum_{k=0}^N S^{ij}_k X^k\,,
$$
for all $i,j=1,\ldots, N$.

\smallskip\noindent
-- The 3-tensor $S$ is obtained by
$$
S^{ij}_k=\EE[{X^i}\, X^j\, \ol{X^k}]\,,
$$
for all $i,j,k=0,\ldots,N$. 

\smallskip In particular the different possible values taken by $X$ in $\CC^N$ coincide with the vectors $w_n\in\CC^N$, made of the last $N$ coordinates of the eigenvectors $v_n$ associated to $S$ in the representation (\ref{E:SxObtuse}). The associated probabilities are then $p_n=1/(1+\normca{w_n})=1/\normca{v_n}$. 
\end{theorem}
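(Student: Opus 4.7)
The plan is to use the previous theorem as the engine for the direction $S \leadsto X$, and Propositions \ref{P:STdiscret} and \ref{P:symmetries} for the direction $X \leadsto S$, and then to check that the two constructions are mutual inverses.

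\smallskip\noindent
\textbf{From $S$ to $X$.} Let $S$ be a doubly-symmetric 3-tensor on $\CC^{N+1}$ with $S^{i0}_k=\delta_{ik}$. By the previous theorem, the orthogonal system $\rV = \{v_1, \ldots, v_{N+1}\}$ of $\CC^{N+1}$ diagonalizing $S$ in the sense of (\ref{E:SxObtuse}) has cardinality $N+1$ and satisfies $v^0_n = 1$ for each $n$, so that $v_n = \bigl({}^1_{w_n}\bigr)$ for vectors $w_1, \ldots, w_{N+1} \in \CC^N$ forming an obtuse system. I then define $X$ to be the canonical random variable associated to this obtuse system via Proposition \ref{P:ORV}, i.e.\ $X$ takes the value $w_n$ with probability $p_n = 1/(1+\normca{w_n}) = 1/\normca{v_n}$. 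By construction, $X$ is obtuse, and with the convention $X^0 = \indic$, a direct computation gives, for $i,j,k = 0, \ldots, N$,
$$
\EE[X^i X^j \ol{X^k}] \,=\, \sum_{n=1}^{N+1} p_n\, v_n^i\, v_n^j\, \ol{v_n^k} \,=\, \sum_{n=1}^{N+1} \frac{1}{\normca{v_n}}\, v_n^i\, v_n^j\, \ol{v_n^k} \,=\, S^{ij}_k,
$$
using the diagonal formula (\ref{E:SxObtuse}) for $S$. The multiplication identity ${X^i}X^j = \sum_k S^{ij}_k X^k$ then follows immediately from Proposition \ref{P:STdiscret} applied to $X$.

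\smallskip\noindent
\textbf{From $X$ to $S$.} Conversely, given an obtuse random variable $X$ on $\CC^N$, define the 3-tensor $S$ on $\CC^{N+1}$ by $S^{ij}_k = \EE[{X^i}\, X^j\, \ol{X^k}]$. Proposition \ref{P:STdiscret} guarantees the representation relation, and Proposition \ref{P:symmetries} tells us that $S$ satisfies (\ref{E:sym0}), (\ref{E:sym1}), (\ref{E:sym2}), (\ref{E:sym3}), i.e.\ $S$ is doubly-symmetric and verifies $S^{i0}_k = \delta_{ik}$.

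\smallskip\noindent
\textbf{Mutual inverse.} Starting from $S$ and producing $X$ as above, we have already shown that $\EE[X^i X^j \ol{X^k}] = S^{ij}_k$, so one composition gives back $S$. Starting from an obtuse $X$ and producing $S^{ij}_k = \EE[X^i X^j \ol{X^k}]$, the bijection of Theorem \ref{T:diag} forces $S$ to be diagonalized by a unique orthogonal family $\rV$, which by the previous theorem is made of the vectors $(1, w_n)$ with $w_n \in \CC^N$ forming an obtuse system; since $S^{ij}_k = \sum_n p_n v_n^i v_n^j \ol{v_n^k}$ and the $p_n$'s are recovered as $1/\normca{v_n}$, the random variable reconstructed from this obtuse system is the one taking value $w_n$ with probability $p_n$, which is precisely $X$.

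The main potential obstacle was the verification that the two formulas genuinely define inverse maps, but once the diagonalization theorem and the symmetry proposition have been established, this amounts to a matter of recognizing the same sum in two guises. In particular, the values $w_n$ taken by $X$ are read off as the last $N$ coordinates of the diagonalization eigenvectors $v_n$, and the probabilities as $p_n = 1/\normca{v_n}$, exactly as stated.
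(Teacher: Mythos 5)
Your proof is correct and follows the same route the paper takes implicitly: the paper dismisses the statement with ``in particular we have proved the following theorem,'' treating it as a direct corollary of Theorem \ref{T:diag}, the preceding theorem on obtuse systems, and Propositions \ref{P:STdiscret}--\ref{P:symmetries}, which is exactly the combination you spell out. The one place both you and the paper gloss over is the phrase ``the only random variable'': as stated it would include, say, the deterministic random variable constantly equal to one eigenvector $w_n$, which also satisfies the multiplication identities, so the uniqueness is really only among obtuse random variables---but your argument does establish precisely that, so the content of the bijection is correctly proved.
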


\subsection{Recovering the Real Case}\label{SS:real}

In \cite{A-E} have been introduced the notions of \emph{real} obtuse random variables and their associated \emph{real} doubly-symmetric 3-tensors. In the same way they obtained certain symmetries on the tensor which corresponded exactly to the condition for being diagonalizable in some real orthonormal basis. Note that in \cite{A-E} the situation for the diagonalization theorem was much easier, for the symmetries associated to the 3-tensor came down to simultaneous diagonalization of commuting symmetric real matrices.

The question we want to answer here is: How do we recover the real case from the complex case? By this we mean: On what condition a complex doubly-symmetric 3-tensor correspond to a real one, that is, corresponds to real-valued random variables?
 Surprisingly enough, the answer is not: When the coefficients $S^{ij}_k$ are all real! Let us see that with a counter-example.
 
Let us consider the one dimensional random variable $X$ which takes values $i$, $-i$ with probability $1/2$. As usual denote by $X^0$ the constant random variable equal to 1 and by $X^1$ the random variable $X$. We have the relations
\begin{align*}
{X^0}X^0 &= X^0\,\\
{X^0}X^1 &= X^1\,\\
{X^1}X^0 &= X^1\,\\
{X^1}X^1 &= -X^0\,
\end{align*}
which give us the following matrices for the associated 3-tensor $S$:
\begin{align*}
S^0&=(S^{i0}_k)_{i,k}=\rM_{X^0}=\left( \begin{array}{cc}
1&0\\
0&1
\end{array} \right)\\
 S^{1}&=(S^{i1}_k)_{i,k}=\rM_{X^1}=\left( \begin{array}{cc}
0&-1\\
1&0
\end{array} \right)\,.
\end{align*}
They are real-valued matrices, but they are associated to a complex (non real) random variable.
 
\smallskip
In fact, the major difference between a complex (non real) doubly-symmetric 3-tensor and a real doubly-symmetric 3-tensor is the commutation property of indices $i$ and $k$ in the coefficients $S^{ij}_k$. Let us make this more precise. 

\begin{definition}
A doubly symmetric 3-tensor $S$ on $\CC^N$ which also satisfies (\ref{E:sym0}) is said to be \emph{real} if the associated obtuse random variable $X$ on $\CC^N$ is only real-valued.
\end{definition}

\begin{proposition}\label{P:criterionR}
Let $S$ be a complex doubly symmetric tensor on $\CC^N$ which satisfies (\ref{E:sym0}). The following assertions are equivalent.

\smallskip\noindent
1) For all $i,j,k$  we have
$$
S^{ij}_k=S^{kj}_i\,.
$$

\smallskip\noindent
2) The 3-tensor $S$ is real.
\end{proposition}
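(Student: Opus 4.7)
The implication $(2) \Rightarrow (1)$ is immediate: if all $X^i$ are real-valued, then formula (\ref{E:Tijk}) gives $S^{ij}_k = \EE[X^i X^j X^k]$, a quantity manifestly invariant under any permutation of the three indices; in particular $S^{ij}_k = S^{kj}_i$.

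For $(1) \Rightarrow (2)$, my plan is to exploit the companion identity (\ref{E:XbX}) rather than (\ref{E:XX=TX}). The key observation is that the given hypothesis (swap of the first upper index and the lower one) together with the already known symmetry (\ref{E:sym1}) (swap of the two upper indices) are two transpositions of the positions occupied by $(i,j,k)$ that generate the whole symmetric group $\mathfrak{S}_3$; hence $S^{ij}_k$ is in fact fully symmetric under all permutations of $(i,j,k)$. Combined with the normalisation (\ref{E:sym0}), $S^{i0}_k = \delta_{ik}$, this full symmetry yields in particular
$$ S^{ik}_0 \;=\; S^{i0}_k \;=\; \delta_{ik} $$
for every $i,k \in \{0,\ldots,N\}$.

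I would then specialise the relation (\ref{E:XbX}) to $j = 0$. Since $X^0 \equiv 1$, the left-hand side is simply $\ol{X^i}$, while the right-hand side becomes $\sum_{k=0}^N \ol{S^{ik}_0}\, X^k = \sum_{k=0}^N \delta_{ik}\, X^k = X^i$. Thus $\ol{X^i} = X^i$ for every $i = 0,\ldots,N$, so $X$ takes its values in $\RR^N$, which by definition means that $S$ is real. I do not foresee any real obstacle here: once one has the idea of evaluating the conjugate formula (\ref{E:XbX}) at the trivial coordinate $j=0$, the result follows from a one-line computation. The only point requiring a bit of care is the symmetry juggling that reduces $S^{ik}_0$ to $\delta_{ik}$, which genuinely uses the combination of the new hypothesis with (\ref{E:sym1}) and not either of them in isolation.
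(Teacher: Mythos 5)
Your proof is correct, and the route is genuinely (if mildly) different from the paper's. For $(1)\Rightarrow(2)$, the paper works directly at the level of random variables: it rewrites the hypothesis as $\EE[X^i X^j \ol{X^k}] = \EE[X^k X^j \ol{X^i}]$ for all $j$, uses the orthonormality of $\{X^0,\ldots,X^N\}$ in $L^2(\O,\rF,\PP_S)$ to deduce the a.s.\ identity $X^i\,\ol{X^k}=\ol{X^i}\,X^k$, observes this means $X^i\,\ol{X^k}$ is real, and then sets $k=0$. You instead stay at the tensor level: you observe that the new transposition $(i\leftrightarrow k)$ together with the old one from (\ref{E:sym1}) $(i\leftrightarrow j)$ generate the whole symmetric group $\mathfrak{S}_3$, so $S$ is fully symmetric; this gives $S^{ik}_0=S^{i0}_k=\d_{ik}$ (in fact you only need to compose the two given transpositions once, e.g.\ $S^{ik}_0=S^{0k}_i=S^{k0}_i=\d_{ik}$, so the full $\mathfrak{S}_3$ remark is slightly more than needed); plugging this into the conjugate relation (\ref{E:XbX}) at $j=0$ yields $\ol{X^i}=X^i$. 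The two arguments are of comparable length; yours makes the underlying algebraic symmetry explicit and is arguably cleaner, while the paper's makes the probabilistic mechanism (orthonormality in $L^2$) visible, though in the end both rely on the same input, since (\ref{E:XbX}) is itself a consequence of that orthonormality. Your treatment of the easy direction $(2)\Rightarrow(1)$ via formula (\ref{E:Tijk}) with the conjugate dropped is fine.
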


\begin{proof}
The commutation relation implies that
$$
\mathbb{E}[X^i\,{X^j}\,\ol{X^k}]= \mathbb{E}[{X^k}\,X^j\,\ol{X^i}], 
$$
for all $i,j,k=0, 1,\ldots, N$. Since $\{X_j\,;\ j=0,1,\ldots,N\}$ is an orthonormal basis of the canonical space $L^2(\O,\rF,\PP_S)$, we get 
$$
X^i\,\ol{X^k}=\ol{X^i}\,X^k\,,\ \ a.s
$$
for all $i,k$. Then $\ol{X^k}\,X^i$ is almost surely real for all $i,k$. Considering the case $k=0$ implies that $X^i$ is almost surely real and the result follows.
\end{proof}

\medskip
In the counter-example above, one can check that $S^{01}_1= 1$ and $S^{11}_0=-1$.  The commutation condition is not satisfied.

\subsection{From Complex to Real Obtuse Random Variables}

The aim of this subsection is to prove that every complex obtuse random variable is obtained by a unitary transform of $\CC^N$ applied to some real obtuse random variable. This will be obtained in several steps, here is the first one.

\begin{proposition}\ 

\smallskip\noindent
1) Let $X$ be an obtuse random variable in $\CC^N$ and let $X^1, \ldots, X^N$ be its coordinate random variables.
If $Y^1,\ldots Y^n$ are {\bf real} random variables of the form $Y=U\,X$, with $U$ being a unitary operator of $\CC^N$, then  $Y$ is a real obtuse random variable of $\RR^N$. 

\smallskip\noindent
2) Conversely, if $Y$ is a real obtuse random variable on $\RR^N$ and if $X=UY$ with $U$ being a unitary operator on $\CC^N$, then $X$ is an obtuse random variable in $\CC^N$.
\end{proposition}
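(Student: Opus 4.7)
The plan is to reduce both statements to Theorem \ref{YUX}, which already establishes that unitary images of complex obtuse random variables are themselves complex obtuse with identical probabilities. The real content of the present proposition is just to reconcile the complex and real definitions of ``obtuse'' when the random variable happens to take real values.

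For part 1), I would start from the fact that by hypothesis $Y=UX$ with $U$ unitary on $\CC^N$. Theorem \ref{YUX} (or rather the straightforward ``easy direction'' in its proof) immediately gives that $Y$ is a centered, normalized random variable in $\CC^N$ taking exactly $N+1$ distinct values (the unitary $U$ being injective, it preserves the number of distinct values taken). Thus $Y$ is a complex obtuse random variable of $\CC^N$. The extra hypothesis that the coordinates $Y^1,\ldots,Y^N$ are real means that $Y$ actually takes its values in $\RR^N$, so the conditions of being centered ($\EE[Y]=0$) and normalized ($\EE[Y\,Y^*]=I$, which for a real vector reduces to $\EE[Y\,Y^t]=I$) coincide with the real definitions of \cite{A-E}. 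Hence $Y$ is a real obtuse random variable of $\RR^N$.

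For part 2), the key observation is that a real obtuse random variable on $\RR^N$, viewed in $\CC^N$, is automatically a complex obtuse random variable. Indeed, if $Y$ takes $N+1$ values in $\RR^N\subset\CC^N$, is centered, and satisfies $\EE[Y^i Y^j]=\delta_{ij}$ (its real normalization condition), then because each $Y^i$ is real one has
$$
\EE[\ol{Y^i}\,Y^j]=\EE[Y^i Y^j]=\delta_{ij},
$$
so $Y$ is centered and normalized in the complex sense as well, and obviously still takes $N+1$ distinct values. Hence $Y$ is a complex obtuse random variable of $\CC^N$. Applying Theorem \ref{YUX} to $X=UY$ with $U$ unitary on $\CC^N$ then yields that $X$ is a complex obtuse random variable of $\CC^N$, as claimed.

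There is essentially no obstacle here, since everything reduces to Theorem \ref{YUX} combined with the trivial remark that for a real random variable the complex normalization $\EE[\ol{Y^i}Y^j]=\delta_{ij}$ and the real normalization $\EE[Y^iY^j]=\delta_{ij}$ are the same condition. The only point requiring mild care is to check that the set of values of $UX$ (or $UY$) still has cardinality $N+1$; this is immediate from the invertibility of the unitary $U$.
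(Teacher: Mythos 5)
Your proof is correct and follows essentially the same route as the paper: both parts are reduced to Theorem \ref{YUX}, with the observation that for a real-valued random variable the complex normalization condition $\EE[\ol{Y^i}Y^j]=\delta_{ij}$ coincides with the real one $\EE[Y^iY^j]=\delta_{ij}$ (equivalently, $Y^*=Y^t$). The paper's own proof makes exactly this identification and likewise invokes the real analogue of Proposition \ref{P:ORV} from \cite{A-E}.
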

\begin{proof}
This is essentially the same argument as in Theorem \ref{YUX}, at least for the second property. For the first property one has to write that, if $Y$ is real-valued and $Y=UX$ then 
$$
Y^t=Y^*
$$
so that 
$$
\EE[Y\,Y^t]=\EE[Y\, Y^*]=U\,\EE[X\,X^*]\,U^*=I
$$
in the same way as in the proof of Theorem \ref{YUX}. Hence $Y$ is a centered and normalized real random variable in $\RR^N$, taking $N+1$ different values, hence it is a real obtuse random variable of $\RR^N$, as is proved in \cite{A-E} in a theorem similar to Proposition \ref{P:ORV}.
\end{proof}

\bigskip
Now recall the following classical result.

\begin{proposition}\label{P:schur}
Let $v_1,\ldots, v_{N}$ be any linearly free family of $N$ vectors of $\CC^N$. Then there exists a unitary operator $U$ on $\CC^N$ such that the vectors $w_i=Uv_i$ are of the form
$$
w_1=\left(\begin{matrix} z_1^1\\0\\\vdots\\0\\0\end{matrix}\right),\ \ 
w_2=\left(\begin{matrix} z_2^1\\z_2^2\\\vdots\\0\\0\end{matrix}\right),\ \ \ldots,
\ \ w_{N-1}=\left(\begin{matrix} z_{N-1}^1\\z_{N-1}^2\\\vdots\\z_{N-1}^{n-1}\\0\end{matrix}\right),\ \ 
w_{N}=\left(\begin{matrix} z_{N}^1\\z_{N}^2\\\vdots\\z_{N}^{N-1}\\z_{N}^N\end{matrix}\right)\,.
$$
\end{proposition}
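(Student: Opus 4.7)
The plan is to exhibit $U$ as the unitary that sends an appropriate Gram--Schmidt orthonormal basis of $\CC^N$ to the canonical basis. The whole statement is essentially a reformulation of the QR-decomposition, so the argument is short; there is no genuine obstacle, only a matter of organizing the indices correctly.

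First, I would apply the Gram--Schmidt procedure to the free family $v_1, \ldots, v_N$ in order to produce an orthonormal basis $e_1, \ldots, e_N$ of $\CC^N$ with the key nesting property
$$
\mathrm{span}(e_1, \ldots, e_k) = \mathrm{span}(v_1, \ldots, v_k), \qquad k=1, \ldots, N.
$$
This is possible exactly because the $v_i$'s are linearly independent, so at each step $v_k$ does not lie in the span of $e_1, \ldots, e_{k-1}$ and the Gram--Schmidt step is well-defined.

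Next, I would decompose each $v_k$ along this basis. By the nesting property there exist scalars $z_k^1, \ldots, z_k^k \in \CC$ such that
$$
v_k = \sum_{j=1}^{k} z_k^j \, e_j, \qquad k=1, \ldots, N.
$$
I would then define $U$ as the unique linear operator on $\CC^N$ determined by $U e_j = f_j$ for $j=1, \ldots, N$, where $f_1, \ldots, f_N$ denotes the canonical orthonormal basis of $\CC^N$. Since it maps an orthonormal basis to an orthonormal basis, $U$ is unitary.

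Finally, a direct computation gives
$$
U v_k = \sum_{j=1}^{k} z_k^j \, f_j,
$$
which is precisely the vector $w_k$ described in the statement, with its last $N-k$ coordinates equal to zero. This yields the required form for $w_1, \ldots, w_N$ simultaneously and concludes the proof.
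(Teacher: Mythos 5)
Your proof is correct. The paper takes a somewhat different route: it builds $U$ inductively as a composition of block unitaries. It first chooses a unitary sending $v_1$ into $\CC e_1$, then proves by a direct computation (the auxiliary free-family lemma inside the proof) that the truncations $y_2,\ldots,y_N$ of the images $w_i$ to $\CC^{N-1}$ remain linearly independent, then applies a unitary of the form $\left(\begin{smallmatrix}1&0\\0&V\end{smallmatrix}\right)$ acting on $\CC^{N-1}$, and recurses until the coordinates are exhausted. Your approach instead invokes Gram--Schmidt once to produce an orthonormal basis $e_1,\ldots,e_N$ satisfying the nesting property $\mathrm{span}(e_1,\ldots,e_k)=\mathrm{span}(v_1,\ldots,v_k)$, and defines $U$ in one stroke as the change-of-basis unitary $e_j\mapsto f_j$; the triangular form of $Uv_k$ is then immediate. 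Both arguments are really the QR factorization of the matrix with columns $v_1,\ldots,v_N$, but yours is more compact: the explicit linear-independence check at each stage of the paper's recursion is absorbed into the standard hypothesis for Gram--Schmidt, and there is no iterated composition of block unitaries to unwind.
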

\begin{proof}
It is clear that with a well chosen unitary operator $U$ one can map $v_1$ onto $\CC e_1$. The family $w_1,\ldots,w_N$ of images by $U$ of $v_1, \ldots, v_N$ is free. Furthermore, if we put 
$$
w_i=\left(\begin{matrix} a_i\\y_i\end{matrix}\right)
$$
with $a_i\in\CC$ and $y_i\in\CC^{N-1}$, then we claim that the family $\{y_2,\ldots,y_N\}$ is free in $\CC^{N-1}$. Indeed, we must have $a_1\not=0$ and if
$$
\sum_{i=2}^{N} \l_i\,y_i=0
$$
then
$$
\sum_{i=2}^N\l_i\, w_i-\frac{\sum_{i=2}^N \l_i\,a_i}{a_1}w_1=0
$$
and all the $\l_i$ vanish.

Once this has been noticed, we consider the unitary operator $V$ on $\CC^{N-1}$ which maps $y_2$ onto $(1,0,\ldots,0)$ and the unitary operator $U'$ on $\CC^N$ given by
$$
U'=\left(\begin{matrix}1&0&\ldots&0\\0&&&\\0&&V&\\0&&&&\end{matrix}\right)\,.
$$
We repeat the procedure until all the coordinates are exhausted.
\end{proof}

\bigskip 
Now, here is an independence property specific shared by the obtuse systems.
\begin{proposition}\label{free}
Every strict sub-family of an obtuse family is linearly free.
\end{proposition}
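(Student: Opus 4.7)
The plan is to lift the question from $\CC^N$ up to $\CC^{N+1}$ by using the augmented vectors $\wh v_i=(1,v_i)^T$, which by construction form an orthogonal (hence linearly free) basis of $\CC^{N+1}$, and then exploit the uniqueness of expansion in that basis together with the identity $\sum_{i=1}^{N+1} p_i\,\wh v_i = (1,0)^T$ proved in Lemma~\ref{L:proba}.

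Concretely, let $\{v_1,\ldots,v_{N+1}\}$ be an obtuse family in $\CC^N$, and consider a strict sub-family, which after relabeling we may assume to be $\{v_1,\ldots,v_k\}$ with $k\le N$. Suppose for contradiction that there exist scalars $\lambda_1,\ldots,\lambda_k$, not all zero, with $\sum_{i=1}^k \lambda_i\,v_i=0$. Setting $c=\sum_{i=1}^k \lambda_i$, the vector $\wh w=\sum_{i=1}^k \lambda_i\,\wh v_i\in\CC^{N+1}$ then has all its last $N$ coordinates equal to $0$ and first coordinate equal to $c$, i.e.\ $\wh w = c\,(1,0,\ldots,0)^T$.

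Now apply Lemma~\ref{L:proba}, which gives $\sum_{i=1}^{N+1} p_i\,\wh v_i = (1,0,\ldots,0)^T$; multiplying by $c$ and equating with the expression for $\wh w$ yields
\[
\sum_{i=1}^k \lambda_i\,\wh v_i = \sum_{i=1}^{N+1} c\,p_i\,\wh v_i.
\]
Since $\{\wh v_1,\ldots,\wh v_{N+1}\}$ is an orthogonal basis of $\CC^{N+1}$, the expansion of any vector in this basis is unique, so $\lambda_i = c\,p_i$ for $i=1,\ldots,k$ and $0=c\,p_i$ for $i=k+1,\ldots,N+1$. Because $k<N+1$, there is at least one index $i>k$, and $p_i = 1/(1+\|v_i\|^2)>0$ by definition; hence $c=0$, and then $\lambda_i = c\,p_i = 0$ for every $i\le k$, contradicting the assumption that the $\lambda_i$ are not all zero.

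There is essentially no obstacle here: the only subtlety is recognizing that passing from $\CC^N$ to the orthogonal basis $\{\wh v_i\}$ of $\CC^{N+1}$ is the right move, after which the argument reduces to the uniqueness of coordinates in an orthogonal basis combined with the strict positivity of the $p_i$'s. Note that the argument also shows that the full family $\{v_1,\ldots,v_{N+1}\}$ itself is affinely free (only the linear freeness fails, precisely because of the relation $\sum_i p_i\,v_i=0$), which is consistent with the impossibility of reducing the support of an obtuse random variable below $N+1$ points (Proposition~3.1.3 above).
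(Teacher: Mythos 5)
Your proof is correct, and it takes a genuinely different route from the paper's. The paper argues entirely inside $\CC^N$: assuming $v_N=\sum_{i<N}\lambda_i v_i$, it takes the scalar product of both sides first with $v_N$ and then with $v_{N+1}$, using $\ps{v_i}{v_j}=-1$ for $i\neq j$, and obtains $\normca{v_N}=-1$, a contradiction. You instead lift to $\CC^{N+1}$ via the augmented vectors $\wh v_i$, invoke their orthogonality together with the identity $\sum_i p_i\,\wh v_i=(1,0,\ldots,0)^T$ from Lemma~\ref{L:proba}, and compare coefficients by uniqueness of expansion in an orthogonal basis; the strict positivity of the $p_i$'s then forces everything to zero. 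Both arguments are short and elementary. The paper's is more self-contained (it needs nothing beyond the defining relation $\ps{v_i}{v_j}=-1$), while yours is slightly more structural: it makes visible that the \emph{only} linear relation among the $v_i$'s is, up to a scalar, $\sum_i p_i\,v_i=0$, gives affine independence of the full family for free, and it also sidesteps the implicit relabeling in the paper's ``write $v_N$ as a combination of the others'' step (which tacitly assumes the coefficient of the last vector in a putative dependence relation is nonzero). One cosmetic point: your parenthetical reference to ``Proposition~3.1.3'' does not match the paper's numbering and should be dropped or replaced with the correct cross-reference.
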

\begin{proof}
Let $\{v_1,\ldots,v_{N+1}\}$ be an obtuse family of $\CC^N$. 
Let us show that $\{v_1,\ldots,v_N\}$ is free, which would be enough for our claim. If we had
$$
v_N=\sum_{i=1}^{N-1} \l_i \,v_i
$$
then, taking the scalar product with $v_N$ we would get 
$$
\normca{v_N}=\sum_{i=1}^{N-1} -\l_i\,,
$$
whereas, taking the scalar product with $v_{N+1}$ would give
$$
-1=\sum_{i=1}^{N-1} -\l_i\,.
$$
This would imply $\normca{v_N}=-1$, which is impossible.
\end{proof}

\bigskip
Finally, using Proposition \ref{P:schur} we make an important step towards the main result.
\begin{proposition}
Let $\{w_1,\ldots,w_{N+1}\}$ be an obtuse system of $\CC^N$ such that
$$
w_1=\left(\begin{matrix} z_1^1\\0\\\vdots\\0\\0\end{matrix}\right),
w_2=\left(\begin{matrix} z_2^1\\z_2^2\\\vdots\\0\\0\end{matrix}\right),\ldots,
w_{N-1}=\left(\begin{matrix} z_{N-1}^1\\z_{N-1}^2\\\vdots\\z_{N-1}^{N-1}\\0\end{matrix}\right).
$$
Then there exist $\phi_1, \ldots, \phi_N$,  modulus 1 complex numbers, such that for every $i=1,\ldots N+1$ we have
$$
w_i=\left(\begin{matrix}\phi_1\,a_i^1\\\vdots\\\phi_N\,a_i^N\end{matrix}\right)
$$
with the $a_i^j$'s being reals.
\end{proposition}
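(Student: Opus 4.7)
The plan is to prove, by induction on the column index $j\in\{1,\ldots,N\}$, that every entry $z_i^j$ has the form $\phi_j\,a_i^j$ with $a_i^j\in\RR$, where $\phi_j$ is a suitable unit modulus complex number. First I fix the phases. For $j=1,\ldots,N-1$, the triangular form together with the construction in Proposition \ref{P:schur} ensure that the diagonal entry $z_j^j$ is nonzero, and I set $\phi_j = z_j^j/|z_j^j|$. For $j=N$, the vectors $w_1,\ldots,w_{N-1}$ all lie in the hyperplane $\CC^{N-1}\times\{0\}$ by assumption; if $z_N^N$ were zero, then $w_N$ would also lie in this hyperplane, so the family $\{w_1,\ldots,w_N\}$ would sit in an $(N-1)$-dimensional subspace, contradicting the linear freedom guaranteed by Proposition \ref{free}. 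Hence $z_N^N\neq 0$ and I set $\phi_N = z_N^N/|z_N^N|$.

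The base case $j=1$ is immediate: $z_1^1 = \phi_1\,|z_1^1|$ by construction, and for every $i\geq 2$ the obtuse relation $\ps{w_1}{w_i}=-1$ collapses, thanks to $z_1^k=0$ for $k\geq 2$, to $\ol{z_1^1}\,z_i^1=-1$, giving $z_i^1 = -\phi_1/|z_1^1|$. Both are $\phi_1$ times a real number. For the inductive step, fix $2\leq j\leq N-1$ and assume the statement holds for every index $j'<j$. Entries $z_i^j$ with $i<j$ vanish by the triangular hypothesis, and $z_j^j=\phi_j\,|z_j^j|$ is of the desired form. For $i>j$, the relation $\ps{w_j}{w_i}=-1$ combined with $z_j^k=0$ for $k>j$ gives
$$
\ol{z_j^j}\,z_i^j \;=\; -1\;-\;\sum_{k=1}^{j-1}\ol{z_j^k}\,z_i^k.
$$
By the induction hypothesis each product $\ol{z_j^k}\,z_i^k$ equals $|\phi_k|^2$ times a real number, hence is real, so the right-hand side is real. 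Since $\ol{z_j^j}=\ol{\phi_j}\,|z_j^j|$, dividing yields $z_i^j=\phi_j\cdot(\text{real})$, as required.

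For the terminal index $j=N$, the $N$-th coordinate of $w_i$ vanishes for $i\leq N-1$ by the triangular hypothesis and $z_N^N=\phi_N\,|z_N^N|$ by construction, so only $z_{N+1}^N$ remains. Expanding $\ps{w_N}{w_{N+1}}=-1$ and using the already established form of the first $N-1$ columns to conclude that $\sum_{k=1}^{N-1}\ol{z_N^k}\,z_{N+1}^k$ is real, I deduce that $\ol{z_N^N}\,z_{N+1}^N$ is real and hence $z_{N+1}^N=\phi_N\cdot(\text{real})$, completing the proof. The most delicate point is ensuring that every diagonal entry $z_j^j$ is nonzero so that the phase $\phi_j$ is well-defined; for $j\leq N-1$ this is built into the construction of Proposition \ref{P:schur}, while for $j=N$ it requires the separate linear independence argument via Proposition \ref{free}.
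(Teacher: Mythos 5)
Your proof is correct and follows essentially the same line-by-line argument as the paper (expand the obtuse scalar products $\ps{w_j}{w_i}=-1$ in increasing $j$ and use the triangular vanishing to isolate a single unknown column entry at each step), though you organize it more carefully as a formal induction with explicit phases $\phi_j=z_j^j/\vert z_j^j\vert$. One small correction: for $j\leq N-1$, appealing to the construction in Proposition \ref{P:schur} to get $z_j^j\neq 0$ is not quite legitimate, since the present proposition only assumes the triangular shape and does not assume it was produced by that construction; instead, apply verbatim the same linear-independence argument you already give for $j=N$ (if $z_j^j=0$ then $w_1,\ldots,w_j$ lie in $\mathrm{span}(e_1,\ldots,e_{j-1})$ and are dependent, contradicting Proposition \ref{free}), which is exactly what the paper does uniformly for all $j=1,\ldots,N$.
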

\begin{proof}
First note that the $z^i_i$'s, $i=1,\ldots ,N$, cannot vanish, for otherwise, the family $\{w_1,\ldots, w_N\}$ would not be linearly free, contradicting Proposition \ref{free}.

Secondly, the scalar product conditions $\ps{w_1}{w_j}=-1$, for $j=2,\ldots, N+1$, imply 
$$
z^1_2=\ldots=z^1_{N+1}=-\left(\ol{z^1_1}\right)^{-1}\,.
$$
In particular, all the $z^1_i$'s, $i=1,\ldots, N+1$, have the same argument.

With the conditions $\ps{w_2}{w_j}=-1$, for $j=3,\ldots, N+1$, we get
$$
\ol{z^2_2}z^2_3=\ldots=\ol{z^2_2}z^2_{N+1}=-1-\ab{z^1_2}^2.
$$
Hence all the $z^2_i$'s are equal for $i=3, \ldots, n+1$ and all $z^2_i$'s have same argument ($i=2, \ldots, N+1$).

One easily obtains the result in the same way, line by line.
\end{proof}

\bigskip
Altogether we have proved the following theorem.

\begin{theorem}\label{T:CtoR}
For every obtuse family  $v_1,\ldots, v_{N+1}$ of $\CC^N$ there exists a unitary operator $U$ of $\CC^N$ such that the vectors 
$w_i=Uv_i$ all have real coordinates. 
This family of vectors of $\RR^N$ form a real obtuse system of $\RR^N$, with same probabilities as the initial family $v$.

In other words, every complex obtuse random variable $X$ in $\CC^N$ is of the form $X=UY$ for some unitary operator $U$ on $\CC^N$ and some real obtuse random variable $Y$ on $\RR^N$.
\end{theorem}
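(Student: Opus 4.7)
The plan is to assemble the theorem from the three preparatory propositions that immediately precede it. First, start with an arbitrary obtuse family $\{v_1,\ldots,v_{N+1}\}$ in $\CC^N$. By Proposition \ref{free}, the sub-family $\{v_1,\ldots,v_N\}$ is linearly free, so Proposition \ref{P:schur} provides a unitary operator $U_1$ on $\CC^N$ such that the vectors $U_1 v_1,\ldots,U_1 v_N$ are in the triangular form displayed there (with $U_1 v_{N+1}$ simply carried along). The family $\{U_1 v_1,\ldots,U_1 v_{N+1}\}$ is still an obtuse system, because $U_1$ being unitary preserves all the inner products $\langle v_i, v_j\rangle = -1$ for $i\neq j$.

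Next, apply the last proposition (the one just before the theorem) to this triangularized obtuse system: it yields modulus-one complex numbers $\phi_1,\ldots,\phi_N$ such that, in each coordinate line $j$, every vector $U_1 v_i$ has $j$-th entry of the form $\phi_j\, a_i^j$ with $a_i^j\in\RR$. Let $U_2=\mathrm{diag}(\ol{\phi_1},\ldots,\ol{\phi_N})$, which is unitary on $\CC^N$. Setting $U=U_2 U_1$, the vectors $w_i=U v_i$ have all real coordinates, namely $w_i=(a_i^1,\ldots,a_i^N)^t$.

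It remains to check that $\{w_1,\ldots,w_{N+1}\}$ is a \emph{real} obtuse system and that the associated probabilities coincide with those of the original family. Since $U$ is unitary we have $\langle w_i, w_j\rangle = \langle v_i, v_j\rangle = -1$ for $i\neq j$ and $\|w_i\|=\|v_i\|$, so $p_i = 1/(1+\|v_i\|^2) = 1/(1+\|w_i\|^2)$. By Proposition \ref{P:ORV}, the $w_i$'s therefore define a real obtuse system of $\RR^N$ with the same probabilities. The statement on random variables then follows immediately: if $X$ takes values $v_i$ with probabilities $p_i$, set $Y=U X$; then $Y$ takes the real values $w_i=Uv_i$ with the same probabilities, hence is a real obtuse random variable, and $X=U^*Y$.

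The only genuinely non-trivial input in this argument is the previous proposition that produces the common phases $\phi_j$ on each coordinate line; this is where the obtuseness conditions $\langle w_i, w_j\rangle = -1$ are genuinely used to force all entries of a given row to share the same argument. Everything else amounts to a unitary change of basis followed by a diagonal phase correction.
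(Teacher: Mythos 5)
Your proof is correct and follows essentially the same route the paper intends: the paper itself introduces the theorem with ``Altogether we have proved the following theorem,'' and your argument assembles the preceding propositions (Proposition \ref{free}, Proposition \ref{P:schur}, and the phase-extraction proposition) in exactly that way, with the diagonal phase correction $U_2=\mathrm{diag}(\ol{\phi_1},\ldots,\ol{\phi_N})$ making the final step explicit. The verification that $\{w_1,\ldots,w_{N+1}\}$ is a real obtuse system with the same probabilities, and the translation to random variables, are both handled as the paper envisions.
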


\subsection{Unitary Transforms of Obtuse Random Variables}

As every complex obtuse random variable $X$ can be obtained as $UY$ for some unitary operator $U$ and some real obtuse random variable $Y$, we shall concentrate for a while on the unitary transformations of obtuse random variables and their consequences on the associated 3-tensors, on the multiplication operators, etc.

As a first step, let us see how is transformed the associated 3-tensor under a unitary map of the random variable.

\begin{lemma}\label{L:StoT}
Let $X$ and $Y$ be two obtuse random variables on $\CC^N$ such that there exist a unitary operator $U$ on $\CC^N$ satisfying
$$
X=UY\,.
$$
We extend $\CC^N$ to $\CC^{N+1}$ by adding some $e_0$ vector to the orthonormal basis; we extend $U$ to a unitary operator on $\CC^{N+1}$ by imposing $Ue_0=e_0$. Let $(u_{ij})_{i,j=0,\ldots, N}$ be the coefficients of $U$ on $\CC^{N+1}$. 

If $S$ and $T$ are the 3-tensors of $X$ and $Y$ respectively, we then have
\begin{equation}\label{E:SUoT}
S^{ij}_k=\sum_{m,n,p=0}^N u_{im}\,u_{jn}\,\ol{u_{kp}}\,T^{mn}_p\,,
\end{equation}
for all $i,j,k=0,\ldots,N$.

Conversely, the tensor $T$ can be deduced from the tensor $S$ by
$$
T^{ij}_k=\sum_{m,n,p=0}^N \ol{u_{mi}}\,\ol{u_{nj}}\,u_{pk}\,S^{mn}_p\,.
$$
\end{lemma}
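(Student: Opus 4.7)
The plan is to exploit directly the expectation formula \eqref{E:Tijk}, namely $S^{ij}_k = \EE[X^i\, X^j\, \ol{X^k}]$ and the analogous identity $T^{mn}_p = \EE[Y^m\, Y^n\, \ol{Y^p}]$, combined with the linear relation between the coordinates of $X$ and $Y$ once the extension convention is in place.

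First, I would spell out the extension convention: since $U e_0 = e_0$, one has $u_{00}=1$, $u_{0j}=0$ for $j\neq 0$ and $u_{i0}=0$ for $i\neq 0$. This guarantees that $X^0 = Y^0 = \indic$ and that the relation
\[
X^i \;=\; \sum_{m=0}^N u_{im}\, Y^m
\]
holds uniformly for all $i=0,\ldots,N$ (for $i=0$ it reduces to $X^0=Y^0$, and for $i\ge 1$ it is just the usual action of $U$ on the last $N$ coordinates, the $m=0$ term being zero). This uniform formulation is what makes the calculation clean.

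Next, I would plug the expansion into \eqref{E:Tijk}:
\[
S^{ij}_k \;=\; \EE\!\left[\Bigl(\sum_{m} u_{im} Y^m\Bigr)\Bigl(\sum_{n} u_{jn} Y^n\Bigr)\Bigl(\sum_{p} \ol{u_{kp}}\, \ol{Y^p}\Bigr)\right],
\]
and push the (finite) sums through the expectation by linearity. The coefficients $u_{im}, u_{jn}, \ol{u_{kp}}$ are deterministic, so they come out, leaving $\EE[Y^m Y^n \ol{Y^p}] = T^{mn}_p$, which yields \eqref{E:SUoT}.

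For the converse relation, the quickest route is to observe that $Y=U^*X$, and that extending $U^*$ by $U^* e_0 = e_0$ gives precisely the extension of the adjoint of the extended $U$; the coefficients of $U^*$ in the extended basis are $(U^*)_{ij}=\ol{u_{ji}}$. Applying the formula just proved, but with the roles of $S$ and $T$ swapped and with $u_{ab}$ replaced by $\ol{u_{ba}}$, gives the stated inversion. There is no genuine obstacle here; the only thing to check carefully is that the extension by $e_0$ is compatible with the convention $X^0=Y^0=\indic$, so that the formula can be written with sums running from $0$ to $N$ rather than requiring a separate treatment of the zeroth coordinate.
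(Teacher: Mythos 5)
Your proof is correct and follows essentially the same route as the paper's: express $X^i=\sum_m u_{im}Y^m$ for all $i=0,\ldots,N$ using the extension $Ue_0=e_0$, plug into $S^{ij}_k=\EE[X^iX^j\ol{X^k}]$ and use linearity, then replace $U$ by $U^*$ for the converse. Your explicit verification that the extension convention makes the expansion uniform in the index $i$, including $i=0$, is a small but welcome clarification that the paper leaves implicit.
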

\begin{proof}
With the extension of $U$ to $\CC^{N+1}$ and the coordinates $X^0=Y^0=\indic$ associated to $X$ and $Y$ as previously, we have
$$
X^i=\sum_{m=0}^N u_{im} Y^m
$$
for all $i=0,\ldots, N$. 

The 3-tensor $S$ is given by
\begin{align*}
S^{ij}_k&=\EE[X^i\,X^j\,\ol{X^k}]\\
&=\sum_{m,n,p=0}^N u_{im}\, u_{jn}\, \ol{u_{kp}}\, \EE[Y^m\, Y^n\, \ol{Y^p}]\\
&=\sum_{m,n,p=0}^N u_{im}\, u_{jn}\, \ol{u_{kp}}\, T^{mn}_p\,.
\end{align*}

The converse formula is obvious, replacing $U$ by $U^*$. 
\end{proof}

\begin{definition}
In the following if two 3-tensors $S$ and $T$ are connected by a formula of the form (\ref{E:SUoT}) we shall denote it by
$$
S=U\circ T\,.
$$
\end{definition}

\smallskip
Let us see now what are the consequences on the representation of multiplication operators, such as given by Theorem \ref{T:multop}. First of all notice that if $X=UY$ then the underlying probability measures $\PP_S$ and $\PP_T$ of their canonical spaces are the same; the unitary transform does not change the probabilities, only the values. Hence $X$ and $Y$ are defined on the same probability space $(\O,\rF,\PP)$. Though, the canonical isomorphisms $U_T$ and $U_S$ are different, they differ by a change of basis actually.

\begin{proposition}\label{P:change_opmult}
Under the conditions and notations above, we have
$$
U_T\,\rM_{Y^i}\, U_T^*=\sum_{j,k=0}^N T^{ij}_k\, a^j_k
$$
and 
\begin{align}
U_S\, \rM_{X^i}\, U_S^*&=\sum_{j,k=0}^N\sum_{m,n,p=0}^N u_{im}\,u_{jn}\,\ol{u_{kp}}\, T^{mn}_p\, a^j_k\label{rMUY1}\\
&=\sum_{j,k=0}^N S^{ij}_k\, a^j_k\,.\label{rMUY2}
\end{align}
\end{proposition}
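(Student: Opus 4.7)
The proposition really asks to combine two tools already in hand, applied to the two random variables $X$ and $Y$ which share the same canonical probability space $(\Omega,\rF,\PP)$ (as pointed out just before the statement, a unitary transform of the values does not change the probabilities).

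The first identity is nothing but Theorem \ref{T:multop} applied directly to the obtuse random variable $Y$, whose associated 3-tensor is $T$ and whose canonical isomorphism is $U_T$. There is nothing extra to check.

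For the second equality, the cleanest route is to prove the last line (\ref{rMUY2}) first and then obtain the middle line (\ref{rMUY1}) by substitution. Indeed, Theorem \ref{T:multop} applied now to $X$, with its 3-tensor $S$ and canonical isomorphism $U_S$, immediately yields
$$U_S\,\rM_{X^i}\,U_S^*=\sum_{j,k=0}^{N} S^{ij}_k\, a^j_k,$$
which is (\ref{rMUY2}). Then Lemma \ref{L:StoT} gives $S^{ij}_k=\sum_{m,n,p=0}^{N} u_{im}\,u_{jn}\,\ol{u_{kp}}\,T^{mn}_p$, and substituting this expression into (\ref{rMUY2}) produces exactly (\ref{rMUY1}).

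As a consistency check, one can also verify (\ref{rMUY1}) directly without invoking Lemma \ref{L:StoT}: from $X=UY$ (extended so that $X^0=Y^0=\indic$) one has $\rM_{X^i}=\sum_{m} u_{im}\,\rM_{Y^m}$, then, expanding $\rM_{Y^m}X^l$ via $X^l=\sum_n u_{ln}Y^n$, using $Y^mY^n=\sum_p T^{mn}_p Y^p$, and finally re-expressing $Y^p=\sum_q \ol{u_{qp}}X^q$ before applying $U_S$, one recovers the triple sum in (\ref{rMUY1}). This direct computation is essentially the proof of Lemma \ref{L:StoT} read at the operator level, so in the write-up it is simpler to just quote Theorem \ref{T:multop} and Lemma \ref{L:StoT}. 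There is no real obstacle here; the only small point of care is to keep track of the fact that $U_S$ and $U_T$ are different unitaries on the common $L^2$-space, so that $U_S\rM_{Y^m}U_S^*\ne U_T\rM_{Y^m}U_T^*$ in general, which is precisely why the coefficients in (\ref{rMUY1}) are not the $T^{mn}_p$ alone but are conjugated by the matrix $(u_{ij})$.
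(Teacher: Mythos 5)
Your proof is correct, and it is a bit more economical than the one in the paper. The paper's argument starts from $\rM_{X^i}=\sum_m u_{im}\rM_{Y^m}$, inserts $U_{T}^{*}U_{T}$, and then explicitly computes the conjugated operator $U_S U_T^{*}\,a^n_p\,U_T U_S^{*}$ by letting it act on the basis $\{e_j\}$, obtaining (\ref{rMUY1}) first and then deducing (\ref{rMUY2}). You instead go in the opposite order: you apply Theorem \ref{T:multop} directly to $X$ (whose 3-tensor is $S$) to get (\ref{rMUY2}) with no computation, and then substitute the identity $S^{ij}_k=\sum_{m,n,p}u_{im}u_{jn}\ol{u_{kp}}\,T^{mn}_p$ from Lemma \ref{L:StoT} to recover (\ref{rMUY1}). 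This is logically equivalent --- as you observe, the paper's explicit computation is essentially the operator-level rewrite of the proof of Lemma \ref{L:StoT} --- but your version avoids redoing that calculation and makes clearer that the proposition is a corollary of two previously established facts. The remark you add about $U_S\rM_{Y^m}U_S^{*}\neq U_T\rM_{Y^m}U_T^{*}$ correctly identifies the subtlety that the paper's proof is navigating by inserting $U_T^{*}U_T$.
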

\begin{proof}
We have
\begin{align*}
U_S\, \rM_{X^i}\, U_{S^*}&=\sum_{m=0}^N u_{im}\, U_S\, \rM_{Y^m}\, U_{S^*}\\
&=\sum_{m=0}^N u_{im}\, U_S\,U_{T^*}\,U_T\, \rM_{Y^m}\, U_{T^*}\,U_T\,U_{S^*}\\
&=\sum_{m=0}^N\sum_{n,p=0}^N u_{im}\, T^{mn}_p\, U_S\,U_{T^*}\, a^n_p\,U_T\,U_{S^*}\,.
\end{align*}
An explicit formula for the operator $U_S\,U_{T^*}\, a^n_p\,U_T\,U_{S^*}$ is obtained easily by acting on the basis:
\begin{align*}
U_S\,U_{T^*}\, a^n_p\,U_T\,U_{S^*}\, e_j&=U_S\,U_{T^*}\, a^n_p\,U_T\,X^j\\
&=\sum_{k=0}^N u_{jk}\,U_S\,U_{T^*}\, a^n_p\,U_T\,Y^k\\
&=\sum_{k=0}^N u_{jk}\,U_S\,U_{T^*}\, a^n_p\,e_k\\
&= u_{jn}\,U_S\,U_{T^*}\,e_p\\
&= u_{jn}\,U_S\,Y^p\\
&= \sum_{k=0}^N u_{jn}\,\ol{u_{kp}}\, U_S\,X^k\\
&= \sum_{k=0}^N u_{jn}\,\ol{u_{kp}}\, e_k\,.
\end{align*}
This proves that
$$
U_S\,U_{T^*}\, a^n_p\,U_T\,U_{S^*}= \sum_{j,k=0}^N u_{jn}\,\ol{u_{kp}}\, a^j_k\,.
$$
Injecting this in the previous identity, we get
$$
U_S\, \rM_{X^i}\, U_{S^*}=\sum_{m=0}^N\sum_{n,p=0}^N \sum_{j,k=0}^Nu_{im}\, u_{jn}\,\ol{u_{kp}}\,T^{mn}_p \, a^j_k\,.
$$
That is, we get (\ref{rMUY1}) and (\ref{rMUY2}) immediately.
\end{proof}

\bigskip
The point is that this unitary operator has not been yet obtained very constructively. The following theorem gives it a little more explicitly, from the associated 3-tensor.

\begin{theorem}\label{T:UUt}
Let $X$ be a normal martingale in $\CC^N$ with associated 3-tensor $S$. Then the matrix $S_0=(S^{ij}_0)_{i,j=0,\ldots, N}$ is symmetric and unitary, it can be decomposed as $V\, V^t$ for some unitary matrix $V$ of $\CC^N$. For any such unitary operator $V$ the random variable $R=V^*\, X$ is a real obtuse random variable of $\CC^N$. 
\end{theorem}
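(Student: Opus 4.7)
The strategy is to reduce everything to the real case via Theorem \ref{T:CtoR}, which writes $X=UY$ for some unitary operator $U$ on $\CC^N$ and some real obtuse random variable $Y$ on $\RR^N$. Extending $U$ to a unitary on $\CC^{N+1}$ by $Ue_0=e_0$, as in Lemma \ref{L:StoT}, the first step is to identify $S_0$ explicitly. Since $Y$ is real, centered, normalized, and $Y^0=1$ almost surely, its 3-tensor $T$ satisfies
$$
T^{mn}_0=\EE[Y^m\,Y^n\,\ol{Y^0}]=\EE[Y^m\,Y^n]=\d_{mn}.
$$
Lemma \ref{L:StoT}, together with $u_{0p}=\d_{0p}$, then yields
$$
S^{ij}_0=\sum_{m,n,p=0}^N u_{im}\,u_{jn}\,\ol{u_{0p}}\,T^{mn}_p=\sum_{m=0}^N u_{im}\,u_{jm}=(UU^t)_{ij},
$$
so $S_0=UU^t$.

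From this identification, the first two assertions follow by routine matrix manipulation. Symmetry of $S_0$ is obvious (or can be read off $S^{ij}_0=\EE[X^iX^j]$). Unitarity rests on the elementary identity $W^t\ol{W}=\ol{W^*W}=I$ for any unitary $W$, applied to $U$:
$$
S_0\,S_0^*=UU^t\,\ol{U}\,U^*=U(U^t\ol{U})U^*=UU^*=I.
$$
The equality $S_0=UU^t$ also displays an admissible factorization $S_0=V V^t$ with $V=U$ unitary; the theorem asks us to show that \emph{any} such $V$ works.

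The final ingredient is the key identity $\ol{X}=S_0^*\,X$, which can be proved in two equivalent ways. Either directly from $X=UY$ (with $Y$ real): $\ol{X}=\ol{U}\,Y=\ol{U}\,U^*\,X=(UU^t)^*X$; or by expanding $\ol{X^i}$ in the orthonormal basis $\{X^0,\dots,X^N\}$ of $L^2(\O,\rF,\PP_S)$, where the coefficient of $X^j$ is $\ps{X^j}{\ol{X^i}}=\ol{\EE[X^jX^i]}=\ol{S^{ij}_0}$. Now let $V$ be any unitary operator with $VV^t=S_0$, and set $R=V^*X$. By Theorem \ref{YUX}, $R$ is an obtuse random variable in $\CC^N$. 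Reality follows from
$$
\ol{R}=V^t\ol{X}=V^t\,S_0^*\,X=V^t\,\ol{V}\,V^*\,X=V^*X=R,
$$
again using $V^t\ol{V}=I$. Hence $R$ is a real obtuse random variable, as required.

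The whole argument is essentially algebraic once the identification $S_0=UU^t$ is in place; the only nontrivial input is Theorem \ref{T:CtoR}. The main conceptual obstacle is noticing that the statement is really about the unique symmetric square-root factorization of $S_0$, which the Takagi-type argument of Theorem \ref{T:diag} would provide abstractly, but which Theorem \ref{T:CtoR} hands us directly in a probabilistically meaningful way.
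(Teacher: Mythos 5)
Your proof is correct, but it follows a genuinely different route from the paper's. The paper works entirely at the level of the 3-tensor $S$: it proves unitarity of $S_0$ directly from orthonormality of $\{X^0,\dots,X^N\}$ in $L^2$, produces a unitary $V$ with $VV^t=S_0$ by invoking Takagi's factorization (Theorem \ref{T:takagi}) and arguing the diagonal factor has unimodular entries, and establishes reality of $R$ by substituting the identity $S^{ij}_k=\sum_m S^{im}_0\,\ol{S^{km}_j}$ into $R^{ij}_k=(V^*\circ S)^{ij}_k$, simplifying, and checking the index-symmetry criterion of Proposition \ref{P:criterionR}. You instead reduce to the real case first via Theorem \ref{T:CtoR}, read off $S_0=UU^t$ from Lemma \ref{L:StoT} (so unitarity and the existence of a factorization come for free), and then prove reality of $R$ directly at the level of random variables through the conjugation identity $\ol{X}=S_0^*X$ together with the algebraic fact $V^t\ol{V}=I$. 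Both routes are sound and both correctly verify that \emph{any} unitary $V$ with $VV^t=S_0$ works, not merely the one you constructed. Your approach is shorter and more transparent, and the identity $\ol{X}=S_0^*X$ is a nice conceptual crystallization of what $S_0$ does — indeed your second, intrinsic derivation of it (by expanding $\ol{X^i}$ in the orthonormal basis, which is essentially relation \eqref{E:XbX} at $j=0$) avoids Theorem \ref{T:CtoR} altogether, and combined with Takagi would give a fully self-contained variant. The paper's approach has the advantage of staying inside the 3-tensor calculus that the rest of Section 3 is built on, and reuses the Takagi machinery already introduced for the diagonalization theorem rather than appealing to the heavier reduction theorem \ref{T:CtoR}.
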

\begin{proof}
By definition we have $S^{ij}_0=\EE[X^i\, X^j]$ and hence is symmetric in $(i,j)$. Now let us check it is a unitary matrix. We have
\begin{align*}
\sum_{m=0}^N S^{im}_0\,\ol{S^{jm}}_0&=\sum_{m=0}^N \EE[X^i\, X^m]\, \EE[\ol{X^m}\, \ol{X^j}]\\
&=\sum_{m=0}^N \ps{\ol{X^i}}{X^m}\,\ps{X^m}{\ol{X^j}}&\mbox{for the scalar product of }L^2(\O,\rF,\PP)\\
&=\ps{\ol{X^i}}{\ol{X^j}}&\mbox{for the }X^m\mbox{'s form an o.n.b. of }L^2(\O,\rF,\PP)\\
&=\EE[X^i\, \ol{X^j}]=\d_{ij}\,.
\end{align*}
We have proved the unitarity.

By Takagi Theorem \ref{T:takagi}, this matrix $S_0$ can be decomposed as $U\,D\,U^t$ for some unitary $U$ and some diagonal matrix $D$. But as $S_0$ is unitary we have
$$
I=S^*\, S=\ol{U}\, \ol{D} \,U^*\, U\, D\, U^t\,,
$$
hence
$$
\ab{D}^2=U^t\, \ol{U}=I
$$
and the matrix $D$ is unitary too. In particular its entries are complex numbers of modulus 1. Let $L$ be the diagonal matrix whose entries are the square root of the entries of $D$, they are also of modulus 1, so that $L\,\ol{L}=I$

Put $V=U\,L$, then 
$$
V\, V^t=U\, L \,{L}\, U^t=U\, D\, U^t=S_0\,,
$$
but also 
$$
V\, V^*=U\, L\,\ol{L}\, U^*=U\,U^*=I\,.
$$
We have proved the announced decomposition of $S_0$. 

\smallskip
We now check the last assertion. Let $v_{ij}$ be the coefficients of $V$. Define the 3-tensor $R=V^*\circ S$, that is,
$$
R^{ij}_k=\sum_{m,n,p=0}^N \ol{v_{mi}}\,\ol{v_{nj}}\, v_{pk}\, S^{mn}_p\,.
$$
Computing $S^{ij}_k=\EE[X^i\, X^j\, \ol{X^k}]$ in another way, we get
\begin{align*}
S^{ij}_k&=\EE[X^i\, X^j\, \ol{X^k}]\\
&=\sum_{m=0}^N \ol{S^{km}_j}\, \EE[X^i\, X^m]\\
&=\sum_{m=0}^N S^{im}_0\, \ol{S^{km}_j}\,.
\end{align*}
Injecting this relation in the expression of $R^{ij}_k$ above, we get
\begin{align*}
R^{ij}_k&=\sum_{m,n,p,\a,\b=0}^N \ol{v_{mi}}\,\ol{v_{nj}}\, v_{pk}\, v_{m\b}\, v_{\a\b}\,\ol{S^{p\a}_n}\\
&=\sum_{n,p,\a,\b=0}^N \d_{i\b}\,\ol{v_{nj}}\, v_{pk}\, v_{\a\b}\,\ol{S^{p\a}_n}\\
&=\sum_{n,p,\a=0}^N \,\ol{v_{nj}}\, v_{pk}\, v_{\a i}\,\ol{S^{p\a}_n}\,.
\end{align*}
But the above expression is clearly symmetric in $(i,j)$, for $S^{p\a}_n$ is symmetric in $(p,\a)$. By Proposition \ref{P:criterionR} this means that the 3-tensor $R$ is real. The theorem is proved.
\end{proof}

\section{Complex Normal Martingales}

The aim of next section is to give explicit results concerning the continuous-time limit of random walks made of sums of obtuse random variables. The continuous time limits will give rise to particular martingales on $\CC^{N}$. In the real case, the limiting martingales are well-understood, they are the so-called \emph{normal martingales} of $\RR^N$ satisfying a \emph{structure equation} (cf \cite{A-E}). In the real case the stochastic behavior of these martingales is intimately related to a certain doubly symmetric 3-tensor and to its diagonalization. To make it short, the directions corresponding to the null eigenvalues  of the limiting 3-tensor are those where the limit process behaves like a Brownian motion; the other directions (non-vanishing eigenvalues) correspond to a Poisson process behavior. This was developed in details in \cite{A-E} and we shall recall their main results below. 

\smallskip
In the next section of this article we wish to obtain two types of time-continuous results: 

\smallskip\noindent
-- a limit in distribution for the processes, for which we would like to rely on the results of \cite{Tav} where is proved that the convergence of the 3-tensors associated to the discrete time obtuse random walks implies the convergence in law of the processes;

\smallskip\noindent
-- a limit theorem for the multiplication operators, for which we would like to rely on the approximation procedure developed in \cite{Att}, where is constructed an approximation of the Fock space by means of spin chains and where is proved the convergence of the basic operators $a^i_j(n)$ to the increments of quantum noises.

\smallskip
When considering the complex case we had two choices: either develop a complex theory of normal martingales and structure equations, extend all the results of \cite{A-E}, of \cite{Tav} and of \cite{Att} to the complex case and prove the limit theorems we wished to obtain; or find a way to connect the complex obtuse random walks to the real ones and rely on the results of the real case, in order to derive the corresponding one for the complex case. We have chosen the second scenario, for we have indeed the same connection between the complex obtuse random variables and the complex ones as we have obtained in the discrete time case. In this section we shall present, complex normal martingales and their structure equations,  the connection between the complex and the real case, together with their consequences. Only in next section we shall apply these results in order to derive the  continuous-time limit theorems.

\subsection{A Reminder of Normal Martingales in $\RR^N$}

We now recall the main results of \cite{A-E} concerning the behavior of normal martingales in $\RR^N$ and their associated 3-tensor.

\smallskip
\begin{definition}
A martingale $X=(X^1,\ldots,X^N)$ with values in $\RR^N$ is a \emph{normal martingale} if  $X_0=0$ a.s. and if its angle brackets satisfy
\begin{equation}\label{E:angle}
\lan X^i\,,\, X^j\ran_t=\d_{ij}\, t\,,
\end{equation}
for all $t\in\Rp$ and all $i,j=1,\ldots, N$.  

This is equivalent to saying that the process $(X^i_tX^j_t-\d_{ij}\,t)_{t\in\Rp}$ is a martingale, or else that the process $([X^i\,,\,X^j]_t-\d_{ij}\, t)_{t\in\Rp}$ is a martingale (where $[\,\cdot\,,\,\cdot\,]$ here denotes the square bracket). 
\end{definition}

\begin{definition}
A normal martingale $X$ in $\RR^N$ is said to satisfy a \emph{structure equation} if there exists a family $\{\Phi^{ij}_k\,;\ i,\,j,\,k =1,\ldots,N\}$ of predictable processes such that
\begin{equation}\label{ES}
[X^i\,,\,X^j]_t=\d_{ij}\, t+\sum_{k=1}^N\int_0^t \Phi^{ij}_k(s)\, dX^k_s\,.
\end{equation}
\end{definition}

Note that if $X$ has the predictable representation property (i.e. every square integrable martingale is a stochastic integral with respect to $X$) and if $X$ is $L^4$, then $X$ satisfies a structure equation, for (\ref{ES}) is just the integral representation of the square integrable martingale $([X^i\,,\,X^j]_t-\d_{ij}\, t)_{t\in\Rp}$.

\smallskip
The following theorem is proved in \cite{A-E}. It establishes the fundamental link between the 3-tensors  $\Phi(s)$ associated to the martingale $X$ and the behavior of $X$. 

\begin{theorem}\label{T:AE}
Let $X$ be a normal martingale in $\RR^N$ satisfying the structure equation
$$
[X^i\,,\,X^j]_t=\d_{ij}\, t+\sum_{k=1}^N\int_0^t \Phi^{ij}_k(s)\, dX^k_s\,,
$$
for all $i,j$. Then for almost all $(s,\o)$  the quantities $\Phi(s,\o)$ are all valued in doubly symmetric 3-tensors in $\RR^N$. 

If one denotes by $\rV_s(\o)$ the orthogonal family associated to the non-vanishing eigenvalues of $\Phi(s,\o)$ and by $\Pi_s(\o)$ the orthogonal projector onto $\rV_s(\o)^\perp$, that is, on the null-egeinvalue subspace of $\Phi(s,\o)$, then the continuous part of $X$ is
$$
X^c_t=\int_0^t \Pi_s(dX_s)\,,
$$
the jumps of $X$ only happen at totally inaccessible times and they satisfy
$$
\D X_t(\o)\in\rV_t(\o)\,.
$$ 
\end{theorem}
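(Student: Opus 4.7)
The statement splits into four claims: (a) $\Phi(s,\o)$ is doubly symmetric at a.e.\ $(s,\o)$, (b) the jumps of $X$ lie in $\rV_s(\o)$, (c) $X^c_t=\int_0^t \Pi_s(dX_s)$, and (d) the jumps occur at totally inaccessible times. I would first read off the jump part of (\ref{ES}): since $[X^i,X^j]$ has jump component $\sum_{s\le t}\D X^i_s\,\D X^j_s$ and $\int_0^\cdot \Phi^{ij}_k\,dX^k$ has jump component $\sum_{s\le t}\Phi^{ij}_k(s)\,\D X^k_s$, matching jumps at each jump time $t$ yields $\Phi(t)(\D X_t)=\D X_t\otimes \D X_t$. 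Once $\Phi(t)$ is known to be doubly symmetric, the characterization of $\rV_t$ in Theorem \ref{T:diag} (real analog) gives $\D X_t\in\rV_t$, establishing (b).

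For (a), the $(i,j)$-symmetry of $\Phi^{ij}_k$ is immediate from $[X^i,X^j]=[X^j,X^i]$ together with uniqueness of the predictable integrand (the implicit PRP that underlies the very notion of a structure equation). For the remaining symmetry I would substitute (\ref{ES}) inside itself to obtain
$$
[[X^i,X^j],X^k]_t=\int_0^t \Phi^{ij}_k(s)\,ds+\sum_{m,l}\int_0^t \Phi^{ij}_m(s)\,\Phi^{mk}_l(s)\,dX^l_s\,,
$$
and compare the absolutely continuous drifts of two such iterations differing by a permutation of the indices. Equivalently, differentiating $\EE[\,[X^i,X^j]_t\,[X^k,X^l]_t\,]$ in $t$ and isolating the $ds$-coefficient forces the symmetry of $\sum_m \Phi^{im}_j\,\Phi^{kl}_m$ in $(i,k)$, which is exactly (\ref{E:sym2}) --- all that Proposition \ref{P:symmetries} demands in the real setting.

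For (c), I would decompose $X=X^c+X^d$ into its continuous-martingale and compensated-jump parts and split (\ref{ES}) into its continuous and its purely-discontinuous components. The discontinuous component reproduces the identity already used for (b). The continuous component reads
$$
\lan X^{i,c},X^{j,c}\ran_t-\d_{ij}\,t=\sum_k\int_0^t \Phi^{ij}_k(s)\,dX^{k,c}_s\,,
$$
with a continuous FV process on the left and a continuous local martingale on the right, so both must vanish. Hence $\sum_k \Phi^{ij}_k(s)\,dX^{k,c}_s=0$ for every $(i,j)$, i.e.\ $dX^c_s\in\ker \Phi(s)$. By the diagonalization in Theorem \ref{T:diag}, $\ker \Phi(s)=\rV_s^\orth$, so $\Pi_s\,dX^c_s=dX^c_s$; combined with $\Pi_s\D X_s=0$ this yields $X^c_t=\int_0^t \Pi_s(dX_s)$. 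Finally, (d) follows because $\lan X^i,X^j\ran_t=\d_{ij}\,t$ is continuous, hence $X$ is quasi-left-continuous and its jumps avoid every accessible stopping time.

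The main obstacle is the extraction of (\ref{E:sym2}): the iteration of (\ref{ES}) has to be performed while carefully separating drift from martingale content, and enough integrability (typically $X\in L^4$) is needed to differentiate the resulting integrated identity in $t$ and to convert it into a pointwise statement on the 3-tensor $\Phi(s,\o)$.
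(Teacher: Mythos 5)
The paper does not prove Theorem \ref{T:AE}: it introduces it with the sentence ``The following theorem is proved in \cite{A-E}'' and merely recalls it as a result of Attal and \'Emery, so there is no in-paper argument against which to match your proposal. Since you cannot have been expected to find a proof that is not there, I will simply comment on the soundness of the route you propose.

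Your sketch captures the right ideas for all four claims and is a plausible reconstruction of the Attal--\'Emery argument: matching jumps in the structure equation to get $\Phi(t)(\D X_t)=\D X_t\otimes\D X_t$ (hence $\D X_t\in\rV_t$); iterating the structure equation inside $[[X^i,X^j],X^l]_t$ and separating the $ds$-drift from the $dX$-martingale content to obtain the algebraic identities on $\Phi$; observing that the continuous local-martingale component of the finite-variation process $[X^i,X^j]_t-\d_{ij}t$ must vanish, so $\Phi(s)(dX^c_s)=0$ and $dX^c_s\in\ker\Phi(s)=\rV_s^\orth$; and quasi-left-continuity from the continuous compensator $\lan X^i,X^j\ran_t=\d_{ij}t$ for total inaccessibility. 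Two precisions would strengthen the write-up. First, in the real setting of \cite{A-E} the 3-tensor is \emph{fully} symmetric in $(i,j,k)$, not merely symmetric in $(i,j)$; this extra symmetry $\Phi^{ij}_l=\Phi^{lj}_i$ falls out precisely of the $ds$-coefficient of the iterated bracket, and your argument produces it, so you should name it. Second, identifying both the drift and the martingale integrands in the iterated equation relies on the uniqueness of the predictable integrand given by PRP together with the uniqueness of the canonical decomposition of the special semimartingale $[X^i,X^j]$; your alternative suggestion of differentiating $\EE\bigl[[X^i,X^j]_t\,[X^k,X^l]_t\bigr]$ only recovers the drift identity under extra integrability, so the direct term-by-term identification in the iterated structure equation is the cleaner path.
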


\smallskip
The case we are concerned with is a simple case where the process $\Phi$ is actually constant. In that case, things can be made much more explicit, as is proved in \cite{A-E} again.

\begin{theorem}\label{T:AE2}
Let $\Phi$ be a doubly-symmetric 3-tensor in $\RR^N$, with associated orthogonal family $\rV$. Let $W$ be a Brownian motion with values in the space $\rV^\perp$. For every $v\in\rV$, let $N^v$ be a Poisson process with intensity $\norme{v}^{-2}$. We suppose $W$ and all the $N^v$ to be independent processes. 

Then the martingale
\begin{equation}\label{E:explicit}
X_t=W_t+\sum_{v\in\rV} \left(N_t^v-\frac{1}{\normca{v}}\, t\right)\, v
\end{equation}
satisfies the structure equation
\begin{equation}\label{E:ESconst}
[X^i\,,\,X^j]_t-\d_{ij}\, t=\sum_{k=1}^N\Phi^{ij}_k(t)\, X^k_t\,,
\end{equation}
Conversely, any solution of (\ref{E:ESconst}) has the same law as $X$.

The martingale $X$ solution of (\ref{E:ESconst}) possesses the chaotic representation property.
\end{theorem}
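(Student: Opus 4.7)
The plan is to handle the three assertions separately. For the first, I would verify directly that the process $X$ defined by (\ref{E:explicit}) satisfies the structure equation (\ref{E:ESconst}), using the real analogue of Theorem \ref{T:diag}, which decomposes $\Phi$ as $\Phi^{ij}_k = \sum_{v\in\rV} \normca{v}^{-1}\, v^i v^j v^k$. Since $W$ is a Brownian motion in $\rV^\perp$, its bracket is $[W^i,W^j]_t = \bigl(\delta_{ij} - \sum_v \normca{v}^{-1} v^i v^j\bigr)\, t$; the Poisson processes $N^v$ are independent and their jumps occur at disjoint times, so they contribute $\sum_v v^i v^j\, N^v_t$ to $[X^i, X^j]_t$. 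Combining,
$$
[X^i, X^j]_t - \delta_{ij}\, t = \sum_{v\in\rV} v^i v^j \left(N^v_t - \frac{t}{\normca v}\right)\,.
$$
On the other hand, since $\ps{v}{W_t}=0$ and the $v$'s are mutually orthogonal, $\ps{v}{X_t} = \normca v\,\bigl(N^v_t - t/\normca v\bigr)$, so
$$
\sum_k \Phi^{ij}_k X^k_t = \sum_{v\in\rV} \frac{v^i v^j}{\normca v}\, \ps{v}{X_t} = \sum_{v\in\rV} v^i v^j \left(N^v_t - \frac{t}{\normca v}\right)\,,
$$
matching the left-hand side.

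For uniqueness in law, I would start from any solution $Y$ of (\ref{E:ESconst}) and apply Theorem \ref{T:AE}, specialized to the constant tensor $\Phi$: the jumps $\Delta Y_t$ lie in the deterministic orthogonal family $\rV$ at totally inaccessible times, and $Y^c_t = \int_0^t \Pi\, dY_s$, where $\Pi$ is the fixed orthogonal projection onto $\rV^\perp$. The bracket of $Y^c$ is $\Pi\, t$, so L\'evy's characterization identifies $Y^c$ with a Brownian motion on $\rV^\perp$. For each $v\in\rV$, set $N^v_t = \#\{s\le t\,;\ \Delta Y_s = v\}$, a counting process. The structure equation, combined with normality $\langle Y^i, Y^j\rangle_t = \delta_{ij}t$, lets one compute the predictable compensator of each $N^v$ and identify it as $t/\normca v$. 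Since jumps of $N^v$ and $N^{v'}$ cannot coincide when $v\neq v'$, and the continuous martingale $Y^c$ shares no jumps with any $N^v$, all pairwise brackets vanish. Watanabe's characterization then yields that the $(N^v)_{v\in\rV}$ are independent Poisson processes with the prescribed intensities, jointly independent of $Y^c$. Reconstructing $Y = Y^c + \sum_v (N^v_t - t/\normca v)\,v$ gives it the same law as (\ref{E:explicit}).

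The chaotic representation property for $X$ is then a consequence of the explicit representation together with the independence of the components: each summand ($W$ on $\rV^\perp$ and each Poisson process $N^v$) individually enjoys CRP on its natural Fock space (Wiener and Poisson chaos, respectively), and mutual independence tensorizes this into CRP for the joint filtration generated by $X$.

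The main obstacle is the uniqueness step. Identifying the intensity of each $N^v$ as exactly $1/\normca v$, and then confirming joint independence, requires a careful compensator calculation passing from the purely algebraic bracket identity (\ref{E:ESconst}) to the law of a multivariate point process through the Watanabe--L\'evy characterization. What makes this workable here, as opposed to the general predictable setting of Theorem \ref{T:AE}, is that $\Phi$ is constant, so $\rV$, the admissible jump sizes, and their weights are all deterministic; this is precisely the feature that turns the structure equation into a rigid constraint pinning down the joint law.
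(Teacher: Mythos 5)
The paper does not prove Theorem \ref{T:AE2} at all: it is recalled from \cite{A-E}, the preceding sentence stating ``as is proved in \cite{A-E} again,'' so there is no in-paper proof to compare against. Your reconstruction is sound and follows the expected route. The existence verification is complete and correct as written, and your compensator identification in the uniqueness step can be made entirely algebraic: contracting the structure equation (\ref{E:ESconst}) with $\normca{v}^{-1}v^i v^j$ and using orthogonality of $\rV$ yields $\normca{v}\,N^v_t - t = \ps{v}{Y_t}$, a martingale, so the compensator $t/\normca{v}$ falls out with no extra estimate; the same identity is what one needs to confirm $\langle Y^{c,i},Y^{c,j}\rangle_t=\Pi_{ij}\,t$ before invoking L\'evy's characterization, so it does double duty.

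Two points deserve more care than the sketch gives them. First, the joint independence of the $(N^v)_{v\in\rV}$ with each other and with $Y^c$ does not follow from a bare appeal to ``Watanabe's characterization'': the scalar Watanabe theorem gives each $N^v$ Poisson individually, and joint independence requires either the multivariate L\'evy--Watanabe theorem, or (cleaner, and closer to the spirit of \cite{A-E} and \cite{Tav}) the observation that the semimartingale characteristics of $Y$ — continuous bracket $\Pi\,t$, L\'evy measure $\sum_{v\in\rV}\normca{v}^{-1}\delta_v$, zero drift — are all deterministic, so $Y$ is a process with independent increments and its L\'evy--It\^o decomposition delivers the independence of the continuous and jump pieces for free. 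Second, the CRP tensorization (``each summand has CRP on its natural Fock space, and independence tensorizes this'') is correct but hides real content: one must check that the interlaced iterated integrals with respect to the components of $X$ span the tensor product of the two $L^2$ spaces, which is precisely the Fock-space factorization for independent normal martingales with CRP. It is true, but it should be cited rather than asserted.
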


\subsection{Normal Martingales in $\CC^N$}

We consider a martingale $X$, defined on its canonical space $(\O,\rF,\PP)$, with values in $\CC^N$, satisfying the following properties (similar to the corresponding definition in $\RR^N$):

\smallskip\noindent
-- the angle bracket $\langle \ol{X^i}\,,\,X^j\rangle_t$ is equal to $\d_{ij}\, t$,

\smallskip\noindent
-- the martingale $X$ has the Predictable Representation Property.

\smallskip
To these conditions we add the following simplifying condition: 

\smallskip\noindent
-- the functions $t\mapsto \EE\left[[X^i\,,\,X^j]_t\right]$ are absolutely continuous with respect to the Lebesgue measure.

Applying all these conditions, we know that, for all $i,j,k=1,\ldots, N$,  there exist predictable processes $(\L^{ij}_t)_{t\in\Rp}$, $(S^{ij}_k(t))_{t\in\Rp}$ and $(T^{ij}_k(t))_{t\in\Rp}$, such that
\begin{align}
[X^i\,,\,X^j]_t&=\int_0^t \L^{ij}_s\, ds+\sum_{k=1}^N \int_0^t S^{ij}_k(s)\, dX^k_s\,,\label{ESCgen}\\
[\ol{X^i}\,,\,X^j]_t&=\d_{ij}\, t+\sum_{k=1}^N \int_0^t T^{ij}_k(s)\, dX^k_s\,.\label{ESCgen2}
\end{align}

Before proving the main properties and symmetries of the coefficients $(\L^{ij}_t)_{t\in\Rp}$, $(S^{ij}_k(t))_{t\in\Rp}$ and $(T^{ij}_k(t))_{t\in\Rp}$, we shall prove a uniqueness result.

\begin{lemma}\label{L:unique}
If $A$ and $B^k$, $k=1,\ldots, N$ are predictable processes on $(\O,\rF,\PP)$ such that
\begin{equation}\label{E:unique}
\int_0^t A_s\, ds+\sum_{k=1}^N \int_0^t B^k_s\, dX^k_s=0
\end{equation}
for all $t\in\Rp$, then $A_t=B^k_t=0$ almost surely, for almost all $t\in\Rp$, for all $k=1,\ldots, N$.
\end{lemma}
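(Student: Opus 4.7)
The plan is to decompose identity (\ref{E:unique}) into its finite variation part and its local martingale part and exploit the classical uniqueness of such a decomposition. Set $V_t := \int_0^t A_s\, ds$ and $M_t := \sum_{k=1}^N \int_0^t B^k_s\, dX^k_s$. The process $V$ is continuous and of finite variation (it is even absolutely continuous), while $M$ is a local martingale null at zero, being a stochastic integral with respect to the martingale $X$. The hypothesis $V_t + M_t = 0$ forces $M = -V$, so $M$ is itself continuous and of finite variation. By the classical result that a continuous local martingale of finite variation starting at $0$ is identically zero, I would conclude $M \equiv 0$, and hence $V \equiv 0$. The vanishing of $V$ combined with Lebesgue differentiation then yields $A_s(\o) = 0$ for $ds\otimes d\PP$-almost every $(s,\o)$.

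Once $M \equiv 0$ is established, the individual processes $B^k$ would be extracted by pairing $M$ against $\ol{X^j}$ in the angle bracket. For each fixed $j \in \{1,\ldots,N\}$, using bilinearity of the predictable covariation and the fact that predictable integrands pull through the bracket, one gets
$$
\lan \ol{X^j}\,,\, M\ran_t = \sum_{k=1}^N \int_0^t B^k_s\, d\lan \ol{X^j}\,,\, X^k\ran_s = \int_0^t B^j_s\, ds\,,
$$
where the second equality uses the normalization hypothesis $\lan \ol{X^i}\,,\, X^j\ran_t = \d_{ij}\, t$. Since $M = 0$, the left-hand side vanishes for every $t$, so $\int_0^t B^j_s\, ds = 0$, which forces $B^j_s = 0$ for almost every $s$ almost surely. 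As $j$ is arbitrary, all the $B^k$ vanish.

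The only delicate step is the continuity argument for $M$: since $X$ may have jumps, the stochastic integral $M$ will in general have jumps of the form $\sum_k B^k_s\, \D X^k_s$, and the cleanest way to kill them is precisely the identity $M = -V$ with $V$ continuous, which reduces everything to the classical continuous case. The remaining ingredients (bilinearity of $\lan\cdot,\cdot\ran$, passage of predictable integrands through brackets, and handling of complex-valued processes by decomposition into real and imaginary parts) are standard and should not pose any substantial difficulty.
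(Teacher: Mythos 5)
Your proof is correct and takes a genuinely different route from the paper. The authors avoid the semimartingale decomposition entirely: they set $Y_t = \int_0^t A_s\, ds + \sum_k \int_0^t B^k_s\, dX^k_s$, apply integration by parts to $\vert Y_t\vert^2 = \ol{Y_t}\,Y_t$, and take expectations to obtain
$$
\EE\bigl[\vert Y_t\vert^2\bigr] = \int_0^t \EE\bigl[\ol{Y_s}A_s + Y_s\ol{A_s}\bigr]\,ds + \sum_{k=1}^N\int_0^t \EE\bigl[\vert B^k_s\vert^2\bigr]\,ds\,,
$$
using $\EE\bigl[[\ol{X^k},X^l]_t\bigr]=\d_{kl}\,t$. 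Setting $Y\equiv 0$ forces every $\int_0^t\EE[\vert B^k_s\vert^2]\,ds$ to vanish, which kills the $B^k$, and then $\int_0^t A_s\,ds=0$ kills $A$. Your argument instead separates the finite-variation part $V$ from the local-martingale part $M$, invokes the classical theorem that a continuous local martingale of finite variation starting at $0$ is identically zero (the continuity of $M=-V$ being exactly the observation needed to suppress the jumps of $X$), and then recovers each $B^j$ individually by pairing $M$ against $\ol{X^j}$ in the angle bracket. What you gain is conceptual transparency and a clean separation of concerns; what the paper gains is self-containment, since its $L^2$ computation avoids any appeal to the structure theory of continuous finite-variation local martingales and works by elementary non-negativity. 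Both routes silently assume the local integrability of the $B^k$ needed for the stochastic integrals to define genuine local martingales, so you are not introducing any hypothesis the authors did not already need.
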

\begin{proof}
Let us write 
$$
Y_t=\int_0^t A_s\, ds+\sum_{k=1}^N \int_0^t B^k_s\, dX^k_s\,.
$$
Then
\begin{align*}
\ab{Y_t}^2&=\ol{Y_t}\,Y_t=\int_0^t \ol{Y_s}\, dY_s+\int_0^tY_s\, d\ol{Y_s}+[\ol{Y}\,,\,Y]_t\\
&=\int_0^t \ol{Y_s}\, dY_s+\int_0^tY_s\, d\ol{Y_s}+\sum_{k,l=1}^N\int_0^t\ol{B^k_s}\,B^l_s\, d[\ol{X^k}\,,\,X^l]_s\,.
\end{align*}
In particular,
$$
\EE[\ab{Y_t}^2]=\int_0^s \EE\left[\ol{Y_s}\, A_s+Y_s\,\ol{A_s}\right]\, ds+\sum_{k=1}^N\int_0^t \EE\left[\ab{B^k_s}^2\right]\, ds\,.
$$
If $Y$ is the null process then $B^k_s=0$ almost surely, for a.a. $s$ and for all $k$. This means that 
$$
\int_0^t A_s\, ds=0 
$$
for all $t$ and thus $A$ vanishes too.
\end{proof}

\medskip
We now detail the symmetry properties of $S$, $T$ and $\L$, together with some intertwining relations between $S$ and $\L$. 

\begin{theorem}\label{T:reduc_ESC}\ 

\smallskip\noindent
1) The processes $(S(t))_{t\in\Rp}$ and $(T(t))_{t\in\Rp}$ are connected by the relation
\begin{equation}\label{E:TS}
T^{ij}_k(s)=\ol{S^{ik}_j(s)}\,,
\end{equation}
almost surely, for a.a. $s\in\Rp$ and for all $i,j,k=1,\ldots, N$. 

\smallskip\noindent
2) The process $(S(t))_{t\in\Rp}$ takes its values in the set of doubly-symmetric 3 tensors of $\CC^N$.

\smallskip\noindent
3) The process $(\L_t)_{t\in\Rp}$ takes its values in the set of complex symmetric matrices.

\smallskip\noindent
4) We have the relation
\begin{equation}\label{E:ML}
\sum_{m=1}^N S^{ij}_m(s)\,\L^{mk}_s=\sum_{m=1}^N S^{kj}_m(s)\, \L^{mi}_s\,,
\end{equation}
almost surely, for a.a. $s\in\Rp$ and for all $i,j,k=1,\ldots, N$.

\smallskip\noindent
5) We have the relation
\begin{equation}\label{E:SbL}
\sum_{m=1}^N \ol{S^{km}_j(s)}\, \L^{im}_s=S^{ij}_k(s)\,,
\end{equation}
almost surely, for a.a. $s\in\Rp$ and for all $i,j,k=1,\ldots, N$.

\smallskip\noindent
6) The process $(\ol{X^i_t})_{t\in\Rp}$ has the predictable representation
$$
\ol{X^i_t}=\sum_{m=1}^N \int_0^t \ol{\L^{im}_s}\, dX^m_s\,.
$$

\smallskip\noindent
7) The matrix $\L$ is unitary.
\end{theorem}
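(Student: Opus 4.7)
The plan is to combine the Predictable Representation Property (PRP) applied to the conjugate martingales $\ol{X^i}$ with It\^o's formula on products, extracting all seven identities via the uniqueness Lemma \ref{L:unique}. The key algebraic device is as follows: whenever two expressions for the same semi-martingale are obtained, subtracting them and invoking Lemma \ref{L:unique} forces equality of both the drift and the stochastic integrands almost everywhere.

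\emph{Stage 1 (Parts 3, 6, 7).} Since $\ol{X^i}$ is itself a square-integrable martingale in the filtration of $X$, the PRP furnishes predictable processes $H^{im}$ with $\ol{X^i_t}=\sum_{m=1}^N\int_0^t H^{im}_s\,dX^m_s$. Computing $[\ol{X^i},X^j]$ in two ways---directly from \eqref{ESCgen2}, and by inserting the PRP representation of $\ol{X^i}$ into the bilinearity of brackets and using \eqref{ESCgen}---Lemma \ref{L:unique} yields $\sum_m H^{im}\L^{mj}=\d_{ij}$ and $T^{ij}_k=\sum_m H^{im}S^{mj}_k$. Repeating the argument on $[X^i,\ol{X^j}]=[\ol{X^j},X^i]$ gives $\sum_m H^{jm}\L^{im}=\d_{ij}$; combining both, $H\L=I=H\L^T$, whence $\L=\L^T$, i.e.\ Part 3. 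A third application to $[\ol{X^i},\ol{X^j}]=\ol{[X^i,X^j]}$ matches drifts to yield $\sum_{m,n}H^{im}H^{jn}\L^{mn}=\ol{\L^{ij}}$, which using Part 3 and $H\L=I$ collapses to $H=\ol{\L}$ (Part 6); then $\L^{-1}=\ol{\L}$ together with $\L=\L^T$ gives $\L\L^*=I$, i.e.\ Part 7.

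\emph{Stage 2 (Parts 1, 4, 5).} Expand the triple product $X^i_tX^j_tX^k_t$ via It\^o's formula in the two associative orders $(X^iX^j)X^k$ and $X^i(X^jX^k)$. Both yield semi-martingale decompositions of the same process, so the drifts must agree; after cancellation of the three symmetric terms $X^r\L^{pq}$ using Part 3, the remainder $\sum_m S^{ij}_m\L^{mk}=\sum_m S^{jk}_m\L^{im}$, combined with $S^{jk}_l=S^{kj}_l$, is exactly Part 4. For Part 1, apply the same trick to $X^i\ol{X^j}\ol{X^k}$ in the associations $(X^i\ol{X^j})\ol{X^k}$ and $X^i(\ol{X^j}\ol{X^k})$. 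Denoting the martingale integrand of $[\ol{X^j},\ol{X^k}]$ by $\Phi^{jk}_q=\sum_\alpha\ol{S^{jk}_\alpha}\ol{\L^{\alpha q}}$ (from conjugating \eqref{ESCgen} and using Part 6 to rewrite $d\ol{X^m}$), drift-matching gives
\[
T^{ji}_k=\sum_q\Phi^{jk}_q\L^{iq}=\sum_\alpha\ol{S^{jk}_\alpha}\sum_q\ol{\L^{\alpha q}}\L^{iq}=\ol{S^{jk}_i},
\]
the last step using $\sum_q\ol{\L^{\alpha q}}\L^{iq}=\d_{i\alpha}$ (from Parts 3 and 7). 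Relabeling $i\leftrightarrow j$ yields Part 1, and Part 5 then follows from Part 1 via the symmetry $S^{km}_j=S^{mk}_j$.

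\emph{Stage 3 (Part 2) and main obstacle.} The symmetry \eqref{E:sym1} is immediate from $[X^i,X^j]=[X^j,X^i]$. The deeper symmetries \eqref{E:sym2} and \eqref{E:sym3} come from the same two-associations It\^o analysis applied to the four-factor products $X^i\ol{X^j}X^kX^l$ and $X^i\ol{X^j}X^k\ol{X^l}$: each product is manifestly invariant under $(i,k)$-swap, and matching drifts between the two associations forces the corresponding $S$-contraction appearing in the drift to be $(i,k)$-symmetric. The main obstacle throughout is the bookkeeping of drift coefficients in It\^o expansions of three- and four-factor products: each conjugate factor contributes an $H=\ol{\L}$ integrand, and each bracket yields $\L$, $S$, $T$, and $\Phi$ contributions that intertwine. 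The key algebraic collapse $\L\ol{\L}=I$ (from Parts 3 and 7) reduces the surviving multi-sums to Kronecker deltas; once each pair of expansions is laid out, Lemma \ref{L:unique} mechanically extracts the target identity.
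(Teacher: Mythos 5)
Your proposal is correct and follows essentially the same route as the paper: the Predictable Representation Property applied to the conjugate martingales $\ol{X^i}$, the uniqueness Lemma~\ref{L:unique} to match drift and martingale parts, and iterated-bracket identities to extract the symmetries. The differences are cosmetic rather than structural. Where the paper deduces Part~3 ($\Lambda$ symmetric) in one line from $[X^i,X^j]=[X^j,X^i]$ and uniqueness, you take a longer route via $H\Lambda=I$ and $H\Lambda^T=I$, which works but is unnecessary. Where the paper writes down the iterated brackets $[X^i,[X^j,X^k]]_t=[[X^i,X^j],X^k]_t$ and $[[X^i,\ol{X^j}],[X^k,X^l]]_t$ directly, you reach the same identities by expanding the three- and four-factor products via It\^o's formula in two associative orders and cancelling the common $X^r\Lambda^{pq}$ terms; after cancellation this is literally the bracket associativity the paper uses, so the two are equivalent modulo bookkeeping. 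Your reordering (proving Parts~3, 6, 7 first and then deriving Part~1 from $T^{ij}_k=\sum_m \ol{\Lambda^{im}}S^{mj}_k$ together with $\ol{\Lambda}\Lambda=I$) is sound, since Parts~3, 6, 7 do not use Part~1; and your observation that Part~5 then follows from Part~1 plus the symmetry $S^{km}_j=S^{mk}_j$ checks out, whereas the paper re-derives Part~5 from a fresh bracket computation. So the approach is the same in substance, differing only in the order of the steps and in using the It\^o product expansion as an intermediary for what the paper states directly as iterated-bracket equalities.
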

\begin{proof}
The proof is a rather simple adaptation of the arguments used in Proposition \ref{P:STdiscret} and Proposition \ref{P:symmetries}. First of all, the symmetry $[X^i\,,\,X^j]_t=[X^j\,,\,X^i]_t$ gives
$$
\int_0^t \L^{ij}_s\, ds+\sum_{k=1}^N\int_0^t S^{ij}_k(s)\, dX^k_s=
\int_0^t \L^{ji}_s\, ds+\sum_{k=1}^N\int_0^t S^{ji}_k(s)\, dX^k_s
$$
for all $t$. By the uniqueness Lemma \ref{L:unique} this gives the symmetry of the matrices $\L_s$ and the first symmetry relation (\ref{E:sym1}) for the 3-tensors $S(s)$.

\smallskip
Computing $[[X^i\,,\,X^j]\,,\, \ol{X^k}]_t$ we get
\begin{align*}
[[X^i\,,\,X^j]\,,\, \ol{X^k}]_t&=\sum_{m=1}^N\int_0^t S^{ij}_m(s)\, d[X^m\,,\, \ol{X^k}]_s\\
&=\int_0^t S^{ij}_k(s)\, ds+\sum_{m,n=1}^N\int_0^t S^{ij}_m(s)\, T^{km}_n(s)\, dX^n_s\,.
\end{align*}
But this triple bracket is also equal to
\begin{align*}
[X^i\,,\,[X^j\,,\, \ol{X^k}]]_t&=\sum_{m=1}^N\int_0^t \ol{T^{jk}_m(s)}\, d[X^i\,,\,\ol{X^m}]_s\\
&=\int_0^t \ol{T^{jk}_i(s)}\, ds+\sum_{m,n=1}^N\int_0^t \ol{T^{jk}_m(s)}\, T^{mi}_n(s)\, dX^n_s\,.
\end{align*}
Again, by the uniqueness lemma, and the symmetry (\ref{E:sym1}), we get the relation (\ref{E:TS}).

\smallskip
Now, in the same way as in the proof of Proposition \ref{P:symmetries}, we compute $[[X^i\,,\, \ol{X^j}]\,,\,[X^k\,,\,X^l]]_t$ in two ways, using the symmetry in $(i,k)$ of that quadruple bracket:
\begin{align*}
[[X^i\,,\, \ol{X^j}]\,,\,[X^k\,,\,X^l]]_t&=\int_0^t \sum_{m,n=1}^N\int_0^t \ol{T^{ij}_m(s)}\,S^{kl}_n(s)\, d[\ol{X^m}\,,\,X^n]_s\\
[[X^k\,,\, \ol{X^j}]\,,\,[X^i\,,\,X^l]]_t&=\int_0^t \sum_{m,n=1}^N\int_0^t \ol{T^{kj}_m(s)}\,S^{il}_n(s)\, d[\ol{X^m}\,,\,X^n]_s\,.
\end{align*}
By uniqueness again, the time integral part gives the relation
$$
\sum_{m=1}^N \ol{T^{ij}_m(s)}\,S^{kl}_m(s)=\sum_{m=1}^N\ol{T^{kj}_m(s)}\,S^{il}_m(s)\,,
$$
that is,
$$
\sum_{m=1}^N {S^{im}_j(s)}\,S^{kl}_m(s)=\sum_{m=1}^N{S^{km}_j(s)}\,S^{il}_m(s)\,,
$$
which is the relation (\ref{E:sym2}). 

\smallskip
The relation (\ref{E:sym3}) is obtained exactly in the same way, from the symmetry of $[[X^i\,,\,\ol{X^j}]\,,\,[\ol{X^l}\,,\,X^k]]_t$ in $(i,k)$. We have proved that the 3-tensors $S(s)$ are doubly-symmetric.

\smallskip
Computing $[X^i\,,\, [X^j\,,\, X^k]]_t$ in two different ways we get
\begin{align*}
[X^i\,,\, [X^j\,,\, {X^k}]]_t&=\sum_{m=1}^N\int_0^t S^{jk}_m(s)\, d[X^i\,,\,X^m]_s\\
&=\sum_{m=1}^N\int_0^t S^{jk}_m(s)\, \L^{im}_s\, ds+\sum_{mn=1}^N\int_0^t S^{jk}_m(s)\, S^{im}_n(s)\, dX^n_s\,,
\end{align*}
on one hand, and
\begin{align*}
[[X^i\,,\, X^j]\,,\, {X^k}]_t&=\sum_{m=1}^N\int_0^t S^{ij}_m(s)\, d[X^m\,,\,X^k]_s\\
&=\sum_{m=1}^N\int_0^t S^{ij}_m(s)\, \L^{mk}_s\, ds+\sum_{mn=1}^N\int_0^t S^{ij}_m(s)\, S^{mk}_n(s)\, dX^n_s\,,
\end{align*}
on the other hand. Identifying the time integrals, we get the relation (\ref{E:ML}).

\smallskip
Finally, computing $[X^i\,,\, [X^j\,,\, \ol{X^k}]]_t$ in two different ways we get
\begin{align*}
[X^i\,,\, [X^j\,,\, \ol{X^k}]]_t&=\sum_{m=1}^N\int_0^t \ol{S^{jm}_k(s)}\, d[X^i\,,\,X^m]_s\\
&=\sum_{m=1}^N\int_0^t \ol{S^{jm}_k(s)}\, \L^{im}_s\, ds+\sum_{m,n=1}^N\int_0^t S^{jm}_k(s)\, S^{im}_n(s)\, dX^n_s\,,
\end{align*}
on one hand, and
\begin{align*}
[[X^i\,,\, X^j]\,,\, \ol{X^k}]_t&=\sum_{m=1}^N\int_0^t S^{ij}_m(s)\, d[X^m\,,\,\ol{X^k}]_s\\
&=\sum_{m=1}^N\int_0^t S^{ij}_m(s)\, \d_{mk}\, ds+\sum_{mn=1}^N\int_0^t S^{ij}_m(s)\, \ol{S^{mn}_k(s)}\, dX^n_s\,,
\end{align*}
on the other hand. Identifying the time integrals, we get the relation (\ref{E:SbL}).

\smallskip
Let us prove the result 6). The process $(\ol{X^i_t})_{t\in\Rp}$ is obviously a square integrable martingale and its expectation is 0. Hence it admits a predictable representation of the form
$$
\ol{X^i_t}=\sum_{k=1}^N\int_0^t H^{ik}_s\, dX^k_s
$$
for some predictable processes $H^{ik}$. We write
\begin{align*}
[\ol{X^i}\,,\,\ol{X^j}]_t&=\ol{[X^i\,,\, X^j]_t}\\
&=\int_0^t \ol{\L^{ij}_s}\, ds+\sum_{k=1}^N \int_0^t \ol{S^{ij}_k(s)}\, d\ol{X^k_s}\\
&=\int_0^t \ol{\L^{ij}_s}\, ds+\sum_{k,l=1}^N \int_0^t \ol{S^{ij}_k(s)}\, H^{kl}_s\,d{X^l_s}
\end{align*}
and
\begin{align*}
[\ol{X^i}\,,\,\ol{X^j}]_t&\sum_{k=1}^N\int_0^t H^{ik}_s\, d[X^k\,,\,\ol{X^j}]_s\\
&=\sum_{k=1}^N\int_0^t H^{ik}_s\, \d_{kj}\, ds+ \sum_{k,l=1}^N H^{ik}_s\, \ol{S^{jl}_k(s)}\,.
\end{align*}
The unicity lemma gives the relation $H^{ij}_s=\ol{\L^{ij}_s}$, almost surely, for a.a. $s$.

\smallskip
We now prove 7). We have
\begin{align*}
[\ol{X^i}\,,\,X^j]_t&=\sum_{k=1}^N \int_0^t H^{ik}_s\, d[x^k\,,\,X^j]_s\\
&=\sum_{k=1}^N\int_0^t H^{ik}_s\, \L^{kj}_s\,ds+\sum_{k,l=1}^N \int_0^t H^{ik}_s\, S^{kj}_l(s)\, dX^l_s\,,
\end{align*}
but also
$$
[\ol{X^i}\,,\,X^j]_t=\d_{ij}\,t+\sum_{k=1}^N\int_0^t \ol{S^{ik}_j(s)}\, dX^k_s\,.
$$
Again, by the uniqueness lemma we get
$$
\d_{ij}=\sum_{k=1}^NH^{ik}_s\, \L^{kj}_s=\sum_{k=1}^N \ol{\L^{ik}_s}\, \L^{kj}_s\,.
$$
This proves the announced unitarity.
\end{proof}

\medskip
We can now state one of our main result concerning the structure of complex normal martingales in $\CC^N$.

\begin{theorem}\label{T:CtoRcont}
Let $X$ be a normal martingale in $\CC^N$ satisfying the structure equations
\begin{align*}
[X^i\,,\,X^j]_t&=\int_0^t \L^{ij}_s\, ds+\sum_{k=1}^N \int_0^t S^{ij}_k(s)\, dX^k_s\\
[\ol{X^i}\,,\, X^j]_t&=\d_{ij}\, t+\sum_{k=1}^N \int_0^t \ol{S^{ik}_j(s)}\, dX^k_s\,.
\end{align*}
Then the matrix $\L$ admits a decomposition of the form
$$
\L=V\,V^t
$$
for some unitary $V$ of $\CC^N$. 
For any such unitary $V$, put $R=V^*\circ S$. Then $R$ is a real doubly-symmetric 3-tensor.
\end{theorem}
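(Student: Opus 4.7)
The proof I envisage is the direct continuous-time analog of Theorem \ref{T:UUt}, with $\L$ playing the role that $S_0$ played in the discrete setting and the intertwining identity (\ref{E:SbL}) replacing the identity $S^{ij}_k=\sum_m S^{im}_0\,\ol{S^{km}_j}$ used there.

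First, parts 3 and 7 of Theorem \ref{T:reduc_ESC} tell us that (for a.a.\ $(s,\o)$) the matrix $\L$ is complex symmetric and unitary. Applying the Takagi factorization (Theorem \ref{T:takagi}) we get $\L=U\,D\,U^t$ with $U$ unitary and $D$ diagonal. Unitarity of $\L$ forces $\ol{D}\,D=I$, so the diagonal entries of $D$ are of modulus $1$. Let $L$ be the diagonal matrix whose entries are square roots of those of $D$; then $L\,\ol L=I$, and setting $V=UL$ gives both $V\,V^*=I$ and $V\,V^t=U\,D\,U^t=\L$. This produces the desired factorization $\L=V\,V^t$ exactly as in the proof of Theorem \ref{T:UUt}.

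With $V$ fixed, define $R=V^*\circ S$, i.e.
$$
R^{ij}_k=\sum_{m,n,p=1}^N \ol{v_{mi}}\,\ol{v_{nj}}\,v_{pk}\,S^{mn}_p\,.
$$
Substitute the key intertwining relation (\ref{E:SbL}), rewritten as $S^{mn}_p=\sum_{\a}\ol{S^{p\a}_n}\,\L^{m\a}$, and then $\L^{m\a}=\sum_\b v_{m\b}\,v_{\a\b}$. The sum over $m$ collapses by $\sum_m \ol{v_{mi}}\,v_{m\b}=\d_{i\b}$, leaving
$$
R^{ij}_k=\sum_{n,p,\a=1}^N \ol{v_{nj}}\,v_{pk}\,v_{\a i}\,\ol{S^{p\a}_n}\,.
$$
Since the 3-tensor $S$ is doubly-symmetric (part 2 of Theorem \ref{T:reduc_ESC}), $\ol{S^{p\a}_n}$ is symmetric in $(p,\a)$, so the expression above is symmetric in $(i,k)$. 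This is exactly the reality criterion of Proposition \ref{P:criterionR} (whose proof uses only the orthonormal-basis expansion and goes through verbatim for the continuous-time tensor).

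It remains to observe that $R$ is doubly-symmetric. The three symmetry relations (\ref{E:sym1})-(\ref{E:sym3}) are preserved under the unitary composition $S\mapsto V^*\circ S$: this is an immediate algebraic verification, entirely parallel to the first step of Theorem \ref{T:diag}, because unitary transforms convert orthogonal-family diagonalizations into orthogonal-family diagonalizations. I do not expect any deep obstacle; the main technical point to monitor is that all identities of Theorem \ref{T:reduc_ESC} hold only almost surely for a.a.\ $s$, so the conclusion ``$R$ is a real doubly-symmetric 3-tensor'' is to be understood pointwise a.e. If one wishes to choose $V$ measurably in $(s,\o)$, this would require a measurable-selection argument for the Takagi factorization of the predictable symmetric-unitary process $\L$, but the theorem as stated only asks for existence of a factorization at each point, so this issue does not arise here.
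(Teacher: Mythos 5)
Your proposal is correct and follows essentially the same route as the paper: Takagi factorization of the symmetric unitary matrix $\L$ (mirroring Theorem \ref{T:UUt}), followed by substitution of the intertwining relation (\ref{E:SbL}) into $R^{ij}_k=\sum \ol{v_{mi}}\,\ol{v_{nj}}\,v_{pk}\,S^{mn}_p$ and a collapse of the $m$-sum by unitarity to expose the $(i,k)$-symmetry that the reality criterion of Proposition \ref{P:criterionR} requires. You have also correctly identified that the unspecified ``relation XX'' in the paper's proof is in fact (\ref{E:SbL}), and your closing remarks on preservation of double-symmetry under $S\mapsto V^*\circ S$ and on the pointwise (non-measurable-selection) nature of the statement are accurate and resolve minor gaps the paper leaves implicit.
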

\begin{proof}
This is exactly the same proof as for Theorem \ref{T:UUt} : the decomposition of $\L$ comes from Takagi's Theorem, the expression of $R^{ij}_k$ in terms of the coefficients of $V$ and of the $S^{ij}_k$'s is transformed with the help of the relation XX. One then see that $R$ satisfies the symmetry property which makes it real.
\end{proof}

\subsection{Complex Unitary Transforms of Real Normal Martingales}

From the result above concerning real normal martingales, we shall deduce easily the corresponding behavior of complex normal martingales, as they are obtained by unitary transforms of real normal martingales. 

In the following we shall be interested in the following objects. 
Let $Y$ be a normal martingale on $\RR^N$, satisfying a structure equation with constant 3-tensor $T$. Let $U$ be a unitary operator on $\CC^N$. Injecting canonically $\RR^N$ into $\CC^N$, we consider the complex martingale $X_t=UY_t$, $t\in\Rp$. We consider the 3-tensor
$$
S=U\circ T\,.
$$
We also put
$$
\L=U\,U^t\,.
$$
We choose the following notations. Let $\rW=\{w_1,\ldots, w_k\}\subset \RR^N$ be the orthogonal system associated to $T$, that is, the directions of non-vanishing eigenvalues of $T$. Let $\rW^\perp$ be its orthogonal space in $\RR^N$, that is the null space of $T$, and let us choose an orthonormal basis $\{\wh{w}_1,\ldots, \wh{w}_{N-k}\}$ of $\rW^\perp$ . Let $\rV=U\rW=\{Uw_1,\ldots,Uw_k\}\subset\CC^N$, this set coincides with the orthogonal system associated to $S$ that is, the directions of non-vanishing eigenvalues of $S$. We consider the set $\wh{\rV}=\{\wh{v}_1,\ldots,\wh{v}_{N-k}\}$, where $\wh{v_i}=U\wh{w_i}$, for all $i=1,\ldots, N-k$. We denote by $\rV_\RR$ and $\wh{\rV}_\RR$ the following real subspaces of $\CC^N$ seen as a $2N$-dimensional real vector space:
\begin{align*}
\rV_\RR&=\RR\, v_1\oplus\ldots\oplus\RR\, v_k\,\\
\wh{\rV}_\RR&=\RR\wh{v}_1\oplus\ldots\oplus\RR\wh{v}_{N-k}\,.
\end{align*}
In particular note that $\rV_\RR\oplus\wh{\rV}_\RR=U\,\RR^N$ is a $N$-dimensional real subspace of $\CC^N$.

Finally we denote by $\Pi_S$ the orthogonal projector from $\CC^N$ onto $\wh{\rV}_\RR$ where both spaces are seen as real vector spaces.

\begin{theorem}\label{T:normalC}
With the notations above, the complex martingale $X$ satisfies the following two ``structure equations''
\begin{align}
[X^i\,,X^j]_t&=\L_{ij}\, t+\sum_{k=1}^N S^{ij}_k\, X^k_t\,,\label{E:ESX}\\
[\ol{X^i}\,,X^j]_t&=\d_{ij}\, t+\sum_{k=1}^N \ol{S^{ik}_l}\, X^k_t\,.\label{E:ESX2}
\end{align}
The solutions to both Equation (\ref{E:ESX}) and Equation (\ref{E:ESX2}) are unique in distribution. This distribution is described as follows. The process $X$ is valued in $U\,\RR^N=\rV_\RR\oplus\wh{\rV}_\RR$. The  continuous part of $X$ is
$$
X^c_t=\int_0^t \Pi_S(dX_s)\,,
$$
which lives in $\wh{\rV}_\RR$.
The jumps of $X$ only happen at totally inaccessible times and they satisfy, almost surely
$$
\D X_t(\o)\in\rV_\RR\,.
$$ 
In other words, let $W$ be a $N-k$ dimensional Brownian motion with values in the real space $\wh{\rV}_\RR$. For every $v\in\rV$, let $N^v$ be a Poisson process with intensity $\norme{v}^{-2}$. We suppose $W$ and all the $N^v$ to be independent processes. Then the martingale
\begin{equation}\label{E:explicit}
X_t=W_t+\sum_{v\in\rV} \left(N_t^v-\frac{1}{\normca{v}}\, t\right)\, v
\end{equation}
satisfies the structure equation (\ref{E:ESX}).

The process $X$ possesses the chaotic representation property.
\end{theorem}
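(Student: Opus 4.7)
The plan is to transport everything through the unitary $U$, using Theorem~\ref{T:AE2} as a black box. Existence is by direct construction: let $Y$ be a real normal martingale on $\RR^N$ with constant doubly-symmetric $3$-tensor $T=U^*\circ S$, given explicitly by Theorem~\ref{T:AE2} as the sum of a Brownian motion on $\rW^\perp$ and independent compensated Poisson processes along $\rW$. Set $X_t=UY_t$; then formula~\eqref{E:explicit} holds by linearity of $U$, since $\rV=U\rW$ and $\wh{\rV}_\RR=U\rW^\perp$.

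To verify \eqref{E:ESX} I would use bilinearity of the bracket together with the real structure equation for $Y$:
$$
[X^i,X^j]_t=\sum_{m,n}u_{im}u_{jn}[Y^m,Y^n]_t=\sum_{m,n}u_{im}u_{jn}\Bigl(\d_{mn}\,t+\sum_p T^{mn}_p Y^p_t\Bigr).
$$
The first sum collapses to $(UU^t)_{ij}\,t=\L_{ij}\,t$; the second, after substituting $Y=U^*X$, becomes $\sum_k S^{ij}_k X^k_t$ by the very formula of Lemma~\ref{L:StoT}. The analogous derivation of \eqref{E:ESX2} uses $\ol Y=Y$: the scalar coefficient is then $(UU^*)_{ij}\,t=\d_{ij}\,t$, and the identification of the tensorial part with $\sum_k\ol{S^{ik}_j}X^k_t$ requires the symmetry $T^{mn}_p=T^{mp}_n$, which follows from \eqref{E:sym1} combined with the reality criterion $T^{mn}_p=T^{pn}_m$ of Proposition~\ref{P:criterionR}.

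Uniqueness in law is the main point. Starting from any $X$ satisfying the standing assumptions of \S 4.2 together with \eqref{E:ESX}--\eqref{E:ESX2}, point~6) of Theorem~\ref{T:reduc_ESC} applied with constant $\L$ yields $\ol X=\ol\L\,X$. Setting $Y:=U^*X$ and using $\L=UU^t$ together with the identity $U^t\ol U=I$ (which is the entry-wise complex conjugate of $U^*U=I$), one computes $\ol Y=U^t\ol\L X=U^t\ol U U^*X=U^*X=Y$, so $Y$ is real valued. A bracket computation symmetric to the one above then shows that $Y$ is a real normal martingale satisfying the real structure equation with constant $3$-tensor $T=U^*\circ S$, whereupon Theorem~\ref{T:AE2} pins down the law of $Y$, and hence the law of $X=UY$. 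The localization of $X^c$ in $\wh{\rV}_\RR$ and of $\D X$ in $\rV_\RR$ then follows by applying $U$ to the corresponding statements of Theorem~\ref{T:AE} for $Y$, the projector $\Pi_S$ being the transport $U\Pi_T U^*$ of the orthogonal projector onto $\rW^\perp$. The CRP of $X$ reduces to the CRP of $Y$ since $U$ acts at each chaos level as a unitary change of basis on the integrators, and the filtrations of $X$ and $Y$ coincide.

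The main obstacle I anticipate is the uniqueness step, where the delicate input is the real-valuedness of $Y=U^*X$: this really uses point~6) of Theorem~\ref{T:reduc_ESC}, which itself depends on the PRP assumption for $X$ that is part of the standing hypotheses of \S 4.2. Once this reduction is secured, the rest is bookkeeping of indices through $U$ and a clean appeal to the real case.
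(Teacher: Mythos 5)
Your existence construction, the verification of \eqref{E:ESX}--\eqref{E:ESX2} by transporting the real structure equation through $U$, and the descriptions of the continuous part, jumps, and CRP all match the paper's proof essentially line by line (including the observation that \eqref{E:ESX2} needs the extra index symmetry $T^{mn}_p=T^{mp}_n$ coming from Proposition~\ref{P:criterionR}).

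The one place where you genuinely diverge is the uniqueness step, and it is worth noting the difference. You deduce the real-valuedness of $Y=U^*X$ from point~6) of Theorem~\ref{T:reduc_ESC} (the predictable representation $\ol X=\ol\L X$), whose proof rests on the PRP assumption of \S 4.2; your computation $\ol Y=U^t\ol\L X=U^t\ol U\,U^*X=Y$ is correct given that input. The paper instead avoids any appeal to PRP here: it sets $A=U^*Z$ for an arbitrary solution $Z$ of \eqref{E:ESX}--\eqref{E:ESX2}, shows by pure bracket algebra (using that $R=U^*\circ S$ is real and fully symmetric) that $[A^i,A^j]_t$ and $[\ol{A^i},A^j]_t$ coincide, then splits $A=B+iC$ into real and imaginary parts and reads off from the resulting system $[C^i,C^j]_t=0$, hence $C\equiv 0$. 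This is more elementary and proves uniqueness in law without assuming that the competing solution has the predictable representation property, so it establishes the statement in a strictly larger class of processes. Your route does work under the hypotheses you state, but it front-loads PRP as an assumption on the unknown solution, whereas the paper's argument gets reality of $A$ directly from the two bracket equations alone. If you want to match the paper's level of generality, replace the appeal to Theorem~\ref{T:reduc_ESC}(6) by the real/imaginary decomposition of $A$ and the vanishing-bracket argument.
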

\begin{proof}
The martingale $Y$ satisfies the structure equation
$$
[Y^i\,,\,Y^j]_t=\delta_{ij}\, t+\sum_{k=1}^N T^{ij}_k\, Y^k_t\,.
$$
The process $X_t=U\, Y_t$, $t\in\Rp$, is a martingale with values in $\CC^N$. Clearly we have
\begin{align*}
[X^i\,,\,X^j]_t&=\sum_{m,n=1}^N u_{im}\,u_{jn}\, [Y^m\,,\,Y^n]_t\\
&=\sum_{m,n=1}^N u_{im}\,u_{jn} \,\delta_{mn} t+\sum_{m,n=1}^N\sum_{p=1}^N u_{im}\,u_{jn}\, T^{mn}_p\, Y^p_t\\
&=\sum_{m=1}^N u_{im}\,u_{jm} \,t+\sum_{m,n=1}^N\sum_{p=1}^N\sum_{k=1}^N u_{im}\,u_{jn}\, \ol{u_{kp}}\, T^{mn}_p\, X^k_t\\
&=S^{ij}_0 \,t+\sum_{k=1}^N S^{ij}_k\, X^k_t\,.
\end{align*}
This gives (\ref{E:ESX}).

With a similar computation, we get
\begin{align*}
[\ol{X^i}\,,\,X^j]_t&=\sum_{m,n=1}^N \ol{u_{im}}\,u_{jn}\, [Y^m\,,\,Y^n]_t\\
&=\sum_{m,n=1}^N \ol{u_{im}}\,u_{jn} \,\delta_{mn} t+\sum_{m,n=1}^N\sum_{p=1}^N \ol{u_{im}}\,u_{jn}\, T^{mn}_p\, Y^p_t\\
&=\sum_{m=1}^N \ol{u_{im}}\,u_{jm} \,t+\sum_{m,n=1}^N\sum_{p=1}^N\sum_{k=1}^N \ol{u_{im}}\,u_{jn}\, \ol{u_{kp}}\, T^{mn}_p\, X^k_t\\
&=\sum_{m=1}^N \delta_{ij}\,t+\sum_{m,n=1}^N\sum_{p=1}^N\sum_{k=1}^N \ol{u_{im}\,u_{kp}\, \ol{u_{jn}}}\, T^{mp}_n\, X^k_t\\
&=\delta_{ij} \,t+\sum_{k=1}^N \ol{S^{ik}_j}\, X^k_t\,.
\end{align*}
This gives (\ref{E:ESX2}).

Let us now prove uniqueness in law for the solutions of (\ref{E:ESX}) and (\ref{E:ESX2}). Let $Z$ be another solution of the two equations. Consider the unitary operator $U$ associated to the 3-tensor $S$, that is for which the 3-tensor $T=U^*\circ S$ is real and doubly-symmetric. Put $A_t=U^*_tZ_t$ for all $t\in\RR^p$. Then we get 
\begin{align*}
[A^i\,,\,A^j]_t&=\sum_{m,n=1}^N \ol{u_{mi}}\,\ol{u_{nj}}\, [Z^m\,,\,Z^n]_t\\
&=\sum_{m,n=1}^N \ol{u_{mi}}\,\ol{u_{nj}}\,S^{mn}_0 t+\sum_{m,n=1}^N\sum_{p=1}^N \ol{u_{mi}}\,\ol{u_{nj}}\, S^{mn}_p\, Z^p_t\\
&=\sum_{m,n=1}^N \sum_{p=1}^n\ol{u_{mi}}\,\ol{u_{nj}}\, u_{mp}\,u_{np} \,t+\sum_{m,n=1}^N\sum_{p=1}^N\sum_{k=1}^N \ol{u_{mi}}\,\ol{u_{nj}}\, u_{pk}\, S^{mn}_p\, A^k_t\\
&=\delta_{ij} \,t+\sum_{k=1}^N R^{ij}_k\, A^k_t
\end{align*}
and
\begin{align*}
[\ol{A^i}\,,\,A^j]_t&=\sum_{m,n=1}^N {u_{mi}}\,\ol{u_{nj}}\, [\ol{Z^m}\,,\,Z^n]_t\\
&=\sum_{m,n=1}^N {u_{mi}}\,\ol{u_{nj}}\,\delta_{mn} t+\sum_{m,n=1}^N\sum_{p=1}^N {u_{mi}}\,\ol{u_{nj}}\, \ol{S^{mp}_n}\, Z^p_t\\
&=\sum_{m=1}^N {u_{mi}}\,\ol{u_{mj}} \,t+\sum_{m,n=1}^N\sum_{p=1}^N\sum_{k=1}^N \ol{\ol{u_{mi}}\,{u_{nj}}\, \ol{u_{pk}}\, S^{mp}_n}\, A^k_t\\
&=\delta_{ij} \,t+\sum_{k=1}^N \ol{R^{ik}_j}\, A^k_t\,.
\end{align*}
But as $R$ is a real-valued 3-tensor, symmetric in $i,j,k$, the last expression gives
$$
[\ol{A^i}\,,\,A^j]_t=\delta_{ij} \,t+\sum_{k=1}^N {R^{ij}_k}\, A^k_t\,.
$$
Decomposing each $A^j_t$ as $B^j_t +i C^j_t$ (real and imaginary parts), the last two relations ought to
\begin{align*}
[B^i\,,\,B^j]_t&=\d_{ij}\,t+\sum_{k=1}^N R^{ij}_k\, B^k_t\,,\\
[C^i\,,\,C^j]_t&=0\,,\\
[B^i\,,\,C^j]_t&=\sum_{k=1}^N R^{ij}_k \, C^k_t\,.
\end{align*}
This clearly implies that the $C^j$'s vanish, the processes $A^j$ are real-valued. They satisfy the same structure equation as the real process $Y$ underlying the definition of $X$. By Theorem~\ref{T:AE2} the processes $A$ and $Y$ have same law. Thus so do the processes $Z=UA$ and $X=UY$. This proves the uniqueness in law for the processes in $\CC^N$ satisfying the two equations (\ref{E:ESX}) and (\ref{E:ESX2}).

\smallskip
The projector $\Pi_S$ onto $\wh{\rV}_\RR$ is given by $\Pi_S=U\,\Pi_T\, U^*$, as can be checked easily, even though we are here considering the vector spaces as real ones. The continuous part of $X$ and the jumps of $X$ are clearly the ones of $Y$ but mapped by $U$. Hence, altogether we get
$$
X^c_t=UY^c_t=\int_0^t U\,\Pi_T(dY_s)=\int_0^t U\,U^*\Pi_S\,U(U^* dX_s)=\int_0^t \Pi_S(dX_s)\,.
$$

The part (\ref{E:explicit}) of the theorem is obvious, again by application of the map $U$. 

Finally, let us prove the chaotic representation property. The chaotic representation property for $Y$ says that every random variable $F\in L^2(\O,\rF,\PP)$, the canonical space of $Y$, can be decomposed as
$$
F=\EE[F]+\sum_{n=1}^\infty\sum_{i_1,\ldots, i_n=1}^N\int_{0\leq t_1<\ldots<t_n} f_{i_1,\ldots,i_n}(t_1,\ldots,t_n)\, dY^{i_1}_{t_1}\,\ldots\, dY^{i_n}_{t_n}\,,
$$
for some deterministic functions $f_{i_1,\ldots,i_n}$'s. But decomposing each $Y^j_t$ as $\sum_{m=1}^N \ol{u_{mj}}\, X^m_t$ shows clearly that $F$ can also be decomposed as 
$$
F=\EE[F]+\sum_{n=1}^\infty\sum_{i_1,\ldots, i_n=1}^N\int_{0\leq t_1<\ldots<t_n} g_{i_1,\ldots,i_n}(t_1,\ldots,t_n)\, dX^{i_1}_{t_1}\,\ldots\, dX^{i_n}_{t_n}\,,
$$
where the $g_{i_1,\ldots,i_n}$'s are linear combinations of the $f_{i_1,\ldots,i_n}$'s. This proves the chaotic representation property for $X$ and the theorem is completely proved.
\end{proof}

\section{Continuous-Time Limit of Complex Obtuse Random Walks}

We are now ready to consider the convergence theorem for complexe obtuse random walks.

\subsection{Convergence of the Tensors}

We are now given a time parameter $h>0$ which is meant to tend to 0 later on. This time parameter is the time step of the obtuse random walk we want to study, but note that $h$ may also appear in the internal parameters of the walk, that is, in the probabilities $p_i$ and the values $v_i$ of $X$. 

Hence, we are given  an obtuse random variable $X(h)$ in $\CC^N$, with coordinates $X^i(h)$, $i=1,\ldots,N$ and together with the random variable $X^0=\indic$. The associated 3-tensor of $X(h)$ is given by
$$
{X^i(h)}\,X^j(h)=\sum_{k=0}^N S^{ij}_k(h)\, X^k(h)\,.
$$
Considering the random walk associated to $X(h)$, that is, consider a sequence $(X_n(h))_{n\in\NN^*}$ of i.i.d. random variables in $\CC^N$ all having the same distribution as $X(h)$, the random walk is the stochastic process with time step $h$:
$$
Z^h_{nh}=\sum_{i=1}^n \sqrt{h}\, X_i(h)\,.
$$
This calls for defining
$$
\wh{X}^0=h\,X^0\qq\mbox{and}\qq\wh{X}^j(h)=\sqrt h\,X^j(h)
$$
for all $j=1,\ldots, n$. Putting $\e_0=1$ and $\e_i=1/2$ for all $i=1,\ldots, n$, we then have, for all $i,j=0,\ldots, N$
$$
\wh{X}^i(h)\,\wh{X}^j(h)=\sum_{k=0}^N\wh{S}^{ij}_k(h)\, \wh{X}^k(h)\,,
$$
where
$$
\wh{S}^{ij}_k(h)=h^{\e_i+\e_j-\e_k}\, S^{ij}_k\,.
$$
Finally, we put
$$
M^{ij}_k=\lim_{h\rightarrow0} \wh{S}^{ij}_k(h)\,,
$$
if it exists.

\begin{lemma}\label{limits}
We then get, for all $i,j,k=1,\ldots N$
\begin{align*}
M^{00}_0&=0\,,\\
M^{00}_k&=0\,,\\
M^{0k}_0&=0\,,\\
M^{k0}_0&=0\,,\\
M^{ij}_0&=\lim_{h\to 0}S^{ij}_0(h)\ \ \mbox{(if it exists)}\,,\\
M^{i0}_k&=0\,,\\
M^{0j}_k&=0\,,\\
M^{ij}_k&=\lim_{h\to 0}h^{1/2}\, S^{ij}_k(h)\ \ \mbox{(if it exists)}\,.
\end{align*}
\end{lemma}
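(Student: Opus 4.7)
The plan is simply to substitute the weights $\e_i$ into the exponent $\e_i+\e_j-\e_k$ appearing in $\wh{S}^{ij}_k(h)=h^{\,\e_i+\e_j-\e_k}\,S^{ij}_k(h)$, case by case, and then invoke the structural identity \eqref{E:sym0}, namely $S^{i0}_k=\delta_{ik}$ (together with its $(i,j)$-symmetric counterpart $S^{0j}_k=\delta_{jk}$ coming from \eqref{E:sym1}), whenever a zero index is present. No estimates on the obtuse system are needed: everything reduces to arithmetic of the exponents and to reading off $S^{i0}_k$ and $S^{0j}_k$.

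Concretely, I would enumerate the eight cases in the order stated. For the three cases $M^{00}_0$, $M^{00}_k$ ($k\neq0$), $M^{0k}_0$ ($k\neq 0$) the exponents are, respectively, $1$, $3/2$, $1/2$, and $S^{00}_0=1$, $S^{00}_k=\delta_{0k}=0$, $S^{0k}_0=\delta_{k0}=0$; in each case $\wh{S}^{ij}_k(h)$ is either $0$ for all $h$ or a positive power of $h$, so the limit is $0$. The case $M^{k0}_0$ is identical to $M^{0k}_0$ by the $(i,j)$-symmetry \eqref{E:sym1}. For $M^{i0}_k$ with $i,k\in\{1,\dots,N\}$ the exponent is $1$ and the tensor value is the bounded quantity $\delta_{ik}$, hence $\wh{S}^{i0}_k(h)=h\,\delta_{ik}\to 0$; symmetrically for $M^{0j}_k$.

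The remaining two cases are the ones where the limit is not automatically zero: for $M^{ij}_0$ with $i,j\in\{1,\dots,N\}$ the exponent collapses to $0$ so $\wh{S}^{ij}_0(h)=S^{ij}_0(h)$, and the limit is exactly $\lim_{h\to 0} S^{ij}_0(h)$ when it exists; for $M^{ij}_k$ with $i,j,k\in\{1,\dots,N\}$ the exponent is $1/2$, giving $\wh{S}^{ij}_k(h)=h^{1/2}S^{ij}_k(h)$ and the stated expression for the limit.

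There is essentially no obstacle: the lemma is a bookkeeping statement, and the only ``input'' beyond plugging in $\e_0=1$, $\e_i=1/2$ is the identity \eqref{E:sym0} and its $(i,j)$-symmetrization, both of which have already been established. The mild subtlety worth flagging in the write-up is only that the last two formulas are conditional (``if it exists''), since for a fixed obtuse family $\{v_1(h),\dots,v_{N+1}(h)\}$ depending on $h$, neither $S^{ij}_0(h)$ nor $h^{1/2}S^{ij}_k(h)$ is controlled \emph{a priori}; the existence of these limits will be an assumption of the convergence results in the next subsection.
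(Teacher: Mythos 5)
Your proposal is correct and is exactly the computation the paper sketches: plug $\e_0=1$, $\e_i=\tfrac12$ into the exponent $\e_i+\e_j-\e_k$, use $S^{i0}_k(h)=\delta_{ik}$ (and its $(i,j)$-symmetric version) whenever a zero index appears, and read off which powers of $h$ survive. The paper's proof merely does one representative case ($M^{0j}_k$) and says ``and so on''; you have filled in the same argument for all eight cases.
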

\begin{proof}
These are direct applications of the definitions and the symmetries verified by the $S^{ij}_k(h)$'s. For example:
$$
\wh{S}^{0j}_k(h)=h\, S^{0j}_k(h)=h\, S^{j0}_k=h\,\d_{jk}\,.
$$
This gives immediately that $M^{0j}_k=0$.
And so on for all the other cases. 
\end{proof}

\begin{proposition}\label{Ltensor}
Under the hypothesis that the limits above all exist, the 3-tensor $M$, restricted to its coordinates  $i,j,k=1,\ldots, N$, is a doubly-symmetric 3-tensor of $\CC^N$. 
\end{proposition}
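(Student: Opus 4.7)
The plan is to verify the three symmetries \eqref{E:sym1}, \eqref{E:sym2} and \eqref{E:sym3} for $M$ restricted to indices in $\{1,\ldots,N\}$, by transferring the corresponding symmetries known to hold for each 3-tensor $S(h)$ (by Proposition \ref{P:symmetries}) and carefully passing to the limit $h\to 0$. The governing observation is that, by Lemma \ref{limits}, for indices in $\{1,\ldots,N\}$ one has $S^{ij}_k(h)\sim h^{-1/2}\, M^{ij}_k$, so products of the form $S^{im}_j(h)\,S^{kl}_m(h)$ with $m\geq 1$ blow up like $h^{-1}$; the correct renormalization is therefore to multiply each symmetry identity by $h$ before taking the limit.

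The first symmetry is immediate: for $i,j,k\in\{1,\ldots,N\}$, the identity $S^{ij}_k(h)=S^{ji}_k(h)$ gives, after multiplying by $h^{1/2}$ and letting $h\to 0$, that $M^{ij}_k=M^{ji}_k$.

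For \eqref{E:sym2}, I would start from
$$
\sum_{m=0}^N S^{im}_j(h)\, S^{kl}_m(h)=\sum_{m=0}^N S^{km}_j(h)\, S^{il}_m(h),
$$
isolate the $m=0$ terms using \eqref{E:sym0} (these equal $\delta_{ij}\,S^{kl}_0(h)$ and $\delta_{kj}\,S^{il}_0(h)$ respectively), multiply the whole identity by $h$, and pass to the limit. The $m=0$ contributions vanish, since $h\,S^{kl}_0(h)\to 0\cdot M^{kl}_0=0$; while for each $m\geq 1$,
$$
h\, S^{im}_j(h)\, S^{kl}_m(h)=\bigl(h^{1/2}\,S^{im}_j(h)\bigr)\bigl(h^{1/2}\,S^{kl}_m(h)\bigr)\longrightarrow M^{im}_j\,M^{kl}_m.
$$
The limit identity is precisely the symmetry of $\sum_{m=1}^N M^{im}_j\,M^{kl}_m$ in $(i,k)$. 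The third symmetry is handled analogously: the $m=0$ term on each side of the $S(h)$-identity is $\delta_{ij}\delta_{lk}$ (resp.\ $\delta_{kj}\delta_{li}$), which is annihilated by the factor $h$ in the limit, while the $m\geq 1$ part converges to $\sum_{m=1}^N M^{im}_j\,\overline{M^{lm}_k}$ on one side and the $(i,k)$-swapped version on the other.

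There is no substantial obstacle; the only point requiring a moment of care is the bookkeeping around the $m=0$ contributions, which must be separated out and shown to disappear after renormalization by $h$ — otherwise, taking the limit naively in the original identity would be meaningless because of the $h^{-1/2}$ divergence of the individual coefficients.
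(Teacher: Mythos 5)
Your proof is correct and follows essentially the same route as the paper's: separate the $m=0$ term from each $S(h)$-symmetry identity using \eqref{E:sym0}, multiply by the appropriate power of $h$ ($h^{1/2}$ for the first symmetry, $h$ for the second and third), observe that the $m=0$ contributions are killed by the extra factor of $h$ in the limit, and identify the surviving $m\geq 1$ sums with the corresponding expressions in $M$. (Incidentally, the paper's statement of the first symmetry as ``$M^{ij}_k=M^{kj}_i$'' is a typographical slip for $M^{ij}_k=M^{ji}_k$; your reading is the right one.)
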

\begin{proof}
Let us check that $(M^{ij}_k)_{i,j,k=1,\ldots,N}$ satifies the three conditions for being a doubly-symmetric 3-tensor. Recall that for these indices, we have
$$
M^{ij}_k=\lim_{h\to0}h^{1/2}\, S^{ij}_k(h)\,.
$$
The first condition $M^{ij}_k=M^{kj}_i$ is obvious from the same property of $S^{ij}_k(h)$ and passing to the limit.

We wish now to prove that 
$\sum_{m=1}^N M^{im}_j\,{M^{kl}_m}$ is symmetric in $(i,k)$. The corresponding property for $S(h)$ gives
$$
S^{i0}_j(h)\,{S^{kl}_0}(h)+\sum_{m=1}^N S^{im}_j(h)\,{S^{kl}_m}(h)=
S^{k0}_j(h)\,{S^{il}_0}(h)+\sum_{m=1}^N S^{km}_j(h)\,{S^{il}_m}(h)\,.
$$
In particular, multiplying by $h$, we get
$$
h\,\d_{ij}\,S^{kl}_0(h)+\sum_{m=1}^N \wh{S}^{im}_j(h)\,{\wh{S}^{kl}_m(h)}=h\,\d_{kj}\,S^{il}_0(h)+\sum_{m=1}^N \wh{S}^{km}_j(h)\,\wh{S}^{il}_m(h)\,.
$$
By hypothesis $\lim_{h\to0} S^{kl}_0(h)$ and $\lim_{h\to0}S^{il}_0(h)$ exist hence, passing to the limit, we get
$$
\sum_{m=1}^N M^{im}_j\,{M^{kl}_m}=\sum_{m=1}^N M^{km}_j\,M^{il}_m\,,
$$
which is the second symmetry asked to $M$ for being doubly-symmetric.

The third symmetry is obtained in a similar way. Indeed, we have
$$
S^{i0}_j(h)\,\ol{S^{l0}_k}(h)+\sum_{m=1}^N S^{im}_j(h)\,\ol{S^{lm}_k}(h)=
S^{k0}_j(h)\,\ol{S^{l0}_i}(h)+\sum_{m=1}^N S^{km}_j(h)\,\ol{S^{lm}_i}(h)\,.
$$
This gives, multiplying by $h$ again
$$
h\,\d_{ij}\,\d_{lk}+\sum_{m=1}^N \wh{S}^{im}_j(h)\,\ol{\wh{S}^{lm}_k}(h)=
h\,\d_{kj}\,\\d_{li}+\sum_{m=1}^N \wh{S}^{km}_j(h)\,\ol{\wh{S}^{lm}_i}(h)\,.
$$
Now, passing to the limit as $h$ tends to 0, we get
$$
\sum_{m=1}^N M^{im}_j\,\ol{M^{lm}_k}=
\sum_{m=1}^N M^{km}_j\,\ol{M^{lm}_i}\,.
$$
This gives the last required symmetry.
\end{proof}

\subsection{Convergence in Distribution}

We can now give our convergence in distribution theorem.

\begin{theorem}
Let $X(h)$ be an obtuse random variable on $\CC^N$, depending on a parameter $h>0$, let $S(h)$ be its associated doubly symmetric 3-tensor. Let $(X_{n}(h))_{n\in\NN^*}$ be a sequence of i.i.d. random variables with same law as $X(h)$. Consider the discrete-time random walk
$$
Z^h_{nh}=\sum_{i=1}^n \sqrt{h}\, X_i(h)\,.
$$
If the limits 
$$
M^{ij}_0=\lim_{h\to0} S^{ij}_0(h)
$$
and
$$
M^{ij}_k=\lim_{h\to0}\sqrt h\, S^{ij}_k(h)
$$ 
exist for all $i,j,k=1,\ldots, N$, then the process $Z^h$ converges in distribution to the normal martingale $Z$ in $\CC^N$ solution of the structure equations
\begin{align}
[Z^i\,,Z^j]_t&=M^{ij}_0\, t+\sum_{k=1}^N M^{ij}_k\, Z^k_t\,,\label{E:ESfin}\\
[\ol{Z^i}\,,Z^j]_t&=\d_{ij}\, t+\sum_{k=1}^N \ol{M^{ik}_j}\, Z^k_t\,.\label{E:ESfin2}
\end{align}
\end{theorem}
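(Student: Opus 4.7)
The plan is to reduce to the real case by using the unitary decomposition of Theorem~\ref{T:UUt}, apply the convergence theorem of \cite{Tav} to the resulting real obtuse random walk, and finally transport the real limiting normal martingale back through a unitary as in Theorem~\ref{T:normalC}. The uniqueness in law asserted by Theorem~\ref{T:normalC} will then upgrade subsequential limits to full convergence.

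First, for each $h>0$ the matrix $S_0(h)=(S^{ij}_0(h))_{i,j=1,\ldots,N}$ is complex symmetric and unitary by Theorem~\ref{T:UUt}, and the hypothesis gives $S_0(h)\to M_0:=(M^{ij}_0)_{i,j}$, which is then itself symmetric and unitary as a limit of unitaries. Write $S_0(h)=V(h)V(h)^t$ via Takagi's theorem, with $V(h)$ unitary on $\CC^N$, extended to $\CC^{N+1}$ by $V(h)e_0=e_0$. By compactness of $\rU(\CC^N)$ we may, up to extracting a subsequence, assume $V(h)\to V$ with $VV^t=M_0$. Set $Y(h)=V(h)^*X(h)$; by Theorem~\ref{T:UUt}, $Y(h)$ is a real obtuse random variable on $\RR^N$, with real doubly-symmetric 3-tensor $T(h)=V(h)^*\circ S(h)$ given by Lemma~\ref{L:StoT}.

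Second, applying the conversion formula of Lemma~\ref{L:StoT} and the hypothesis on $\sqrt h\,S^{ij}_k(h)$, one verifies that the rescaled tensors $\sqrt h\,T^{ij}_k(h)$, $i,j,k\geq 1$, converge to a real doubly-symmetric 3-tensor $T$ on $\RR^N$, and that the identity $M=V\circ T$ holds at the level of the limits (treating separately the indices $k=0$ and $k\geq 1$ according to their rescaling exponent $\epsilon_i+\epsilon_j-\epsilon_k$). The discrete-time real obtuse random walk $W^h_{nh}=\sum_{i=1}^n\sqrt h\,Y_i(h)$ then fits exactly the framework of \cite{Tav}, and converges in distribution in the Skorohod space to the real normal martingale $W$ solution of the real structure equation with constant tensor $T$, explicitly described by Theorem~\ref{T:AE2}. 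Since $Z^h_{nh}=V(h)\,W^h_{nh}$ and $V(h)\to V$, the continuous mapping theorem on $\rD(\Rp;\CC^N)$ yields $Z^h\Rightarrow Z:=VW$, and Theorem~\ref{T:normalC} applied to $Z=VW$ shows that $Z$ is a complex normal martingale satisfying precisely \eqref{E:ESfin} and \eqref{E:ESfin2} with tensor $M=V\circ T$ and $\Lambda=VV^t=M_0$. Finally, the uniqueness in law of Theorem~\ref{T:normalC} shows that any accumulation point of $(Z^h)$ has the same distribution as $Z$, so the whole family converges without subsequence extraction.

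The main technical obstacle is the intermediate bookkeeping that identifies $T$ as a genuine real doubly-symmetric 3-tensor and verifies the compatibility $M=V\circ T$ at the continuous-time scale: one must track how the mixed rescaling exponents interact with the block-diagonal extension $V e_0=e_0$ of $V$, and show that the ``row/column zero'' coefficients (providing the deterministic drift $\Lambda$) and the ``bulk'' coefficients (providing the martingale part) scale exactly right so that the formula of Lemma~\ref{L:StoT} passes to the limit and produces a real 3-tensor. Once this scaling compatibility is settled, the rest reduces to a direct combination of \cite{Tav} with Theorem~\ref{T:normalC}.
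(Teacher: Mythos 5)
Your proposal is correct and takes essentially the same route as the paper: both reduce to the real case via the unitary decomposition $X(h)=U(h)Y(h)$ of Theorem~\ref{T:CtoR}/\ref{T:UUt}, extract a convergent subsequence of unitaries by compactness of $\rU(\CC^N)$, invoke the real convergence result of \cite{Tav}, push the limiting real martingale forward by the limiting unitary, and upgrade to full convergence via uniqueness in law from Theorem~\ref{T:normalC}. The only (harmless) variation is that you pin down $V(h)$ explicitly via Takagi factorization of $S_0(h)$ and verify the scaling compatibility $M=V\circ T$ directly through Lemma~\ref{L:StoT}, whereas the paper invokes the existence of the decomposition more abstractly; the substance of the argument is identical.
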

\begin{proof}
For each $h>0$, the random variables $X(h)$ can be written $U(h)\,Y(h)$ for a unitary operator $U(h)$ on $\CC^N$ and a real obtuse random variable $Y(h)$ in $\RR^N$. In terms of the associated 3-tensors, recall that this means
$$
S(h)=U(h)\circ T(h)
$$
or else
$$
T(h)=U(h)^*\circ S(h)\,.
$$
Consider any sequence $(h_n)_{n\in\NN}$ which tends to 0. By hypothesis $S(h_n)$ converges to $M$. Furthermore, as the sequence $(U(h_n))_{n\in\NN}$ lives in the compact group $\rU(\CC^N)$ it admits a subsequence $(h_{n_k})_{k\in\NN}$ converging to some unitary $V$. As a consequence the sequence $(T(h_{n_k}))_{k\in\NN}$ converges to a real 3-tensor $N=V\circ M$. 

The convergence of the 3-tensors $(T(h_{n_k}))_{k\in\NN}$ to $N$ imply the convergence in distribution of the associated real martingales $Y_{h_{n_{k_i}}}$, for a subsequence $(h_{n_{k_i}})_{i\in\NN}$, by G. Taviot's Thesis (cf \cite{Tav}, Proposition 4.2.3, Proposition 4.3.2., Proposition 4.3.3).  The limit is a real normal martingale $Y$ whose associated 3-tensor is $N$. 

Applying the unitary operators $U_{h_{n_{k_i}}}$, which converge to $V$, we have the convergence in law of the process $Z^{h_{n_{k_i}}}$ to the process $Z=VY$.  By Theorem \ref{T:normalC} the process $Z$ is solution of the complex structure equations associated to the tensor $S$. 

For the moment we have proved that for every sequence $(h_n)_{n\in\NN}$ there exists a subsequence $(h_{n_j})_{j\in\NN}$ such that $Z^{h_{n_j}}$ converges in law to $Z$ solution of the complex structure equations associated to $S$. As we have proved that the solutions to these equation are unique in law, the limit in law is unique. Hence the convergence is true not only for subsequences, but more generally for $h$ tending to 0. 
The convergence in law is proved.
\end{proof}

\subsection{Convergence of the Multiplication Operators}

Let us first recall very shortly the main elements of the construction and approximation developed in \cite{Att}, which will now serve us in order to prove the convergence of the multiplication operators. This convergence of multiplication operators is not so usual in a probabilistic framework, but it is the one interesting in the framework of applications in Quantum Statistical Mechanics, for it shows the convergence of the quantum dynamics of repeated interactions towards a classical Langevin equation, when the unitary interaction is unitary (cf \cite{ADP}).

\smallskip
In Subsection \ref{SS:mult} we have seen the canonical isomorphism of the canonical space $L^2(\O,\rF,\PP_S)$ of any obtuse random variable $X$, with the space $\CC^{N+1}$. Recall the basic operators $a^i_j$ that were defined there. 

When dealing with i.i.d. sequences $(X_n)_{n\in\NN}$ of copies of $X$, the canonical space is then isomorphic to the countable tensor product
$$
T\Phi=\bigotimes_{n\in\NN} \CC^{N+1}\,.
$$
When dealing with the associated random walk with time step $h$
$$
Z^h_{nh}=\sum_{i=1}^n \sqrt{h}\, X_{ih}
$$
the canonical space is naturally isomorphic to
$$
T\Phi(h)=\bigotimes_{n\in h\NN}\CC^{N+1}\,.
$$
There, natural ampliations of the basic operators $a^i_j$ are defined: the operators $a^i_j(nh)$ is the acting as $a^i_j$ of the copy $nh$ of $\CC^{N+1}$ and as the identity of the other copies.

On the other hand, when given a normal martingale $A$ in $\RR^N$ with the chaotic representation property, its canonical space $L^2(\O',\rF',\PP')$ is well-known to be naturally isomorphic to the symmetric Fock space 
$$
\Phi=\Gamma_s(L^2(\Rp\,;\, \CC^N))\,,
$$ 
via a unitary isomorphism denoted by $U_A$.
This space is the natural space for the quantum noises $a^i_j(t)$, made of the time operator $a^0_0(t)=tI$, the creation noises $a^0_i(t)$, the annihilation noises $a^i_0(t)$ and the exchange processes $a^i_j(t)$, with $i,j=1,\ldots N$ (cf \cite{Mey}).

\smallskip
The main constructions and results developed in \cite{Att} are the following:

\smallskip\noindent
-- each of the spaces $T\Phi(h)$ can be naturally seen as concrete subspace of $\Phi$;

\smallskip\noindent
-- when $h$ tends to 0 the subspace $T\Phi(h)$ fills in the whole space $\Phi$, that is, concretely, the orthogonal projector $P_h$ onto $T\Phi(h)$ converges strongly to the identity $I$;

\smallskip\noindent
-- the basic operators $a^i_j(nh)$, now concretely acting on $\Phi$, converge to the quantum noises, that is, more concretely the operator
$$
\sum_{n\,;\ nh\leq t} h^{\e^i_j} a^i_j(nh)
$$ 
converges strongly to $a^i_j(t)$ on a certain domain $\rD$ (which we shall not make explicit here, please cf \cite{Att}), where 
$$
e^i_j=\begin{cases} 1&\mbox{if } i=j=0\,,\\
1/2&\mbox{if } i=0, j\not =0\ \mbox{or } i\not=0, j=0\,,\\
0&\mbox{if }i,j\not=0\,.
\end{cases}
$$

Finally recall the representation of the multiplication operators for real-valued normal martingales in $\RR^N$ (cf \cite{A-P2}).

\begin{theorem}\label{T:mult_cont}
If $A$ is a normal martingale in $\RR^N$ with the chaotic representation property and satisfying the structure equation
$$
[A^i\,,\,A^j]_t=\d_{ij}\,t+\sum_{k=1}^N N^{ij}_k\, A^k_t\,,
$$
then its multiplication operator acting of $\Phi=\Gamma_s(L^2(\Rp\,,\,\CC^N))$ is equal to 
$$
U_A\, \rM_{A^i_t}\, U_A^*=a^0_i(t) +a^0_i(t)+\sum_{j,k=1}^N N^{ij}_k\, a^j_k(t)
$$
or else
$$
U_A\,rM_{A^i_t}\,U_A^*=\sum_{j,k=0}^N N^{ij}_k\, a^j_k(t)
$$
if one extends the coefficients $N^{ij}_k$ to the 0 index, by putting $N^{ij}_0=\d_{ij}$.
\end{theorem}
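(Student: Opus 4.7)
The plan is to follow the continuous-time analogue of the discrete argument used for Theorem \ref{T:multop}, exploiting the chaotic representation property (CRP) together with the quantum Itô calculus on the symmetric Fock space $\Phi = \Gamma_s(L^2(\Rp\,;\,\CC^N))$. Via the CRP isomorphism $U_A$, every $F \in L^2(\O',\rF',\PP')$ is mapped to a sum of iterated stochastic integrals
$$
F = \EE[F] + \sum_{n \geq 1}\sum_{i_1,\ldots,i_n} \int_{0 \le t_1 < \cdots < t_n} f_{i_1,\ldots,i_n}(t_1,\ldots,t_n)\, dA^{i_1}_{t_1}\cdots dA^{i_n}_{t_n}\,,
$$
and the standard action of the quantum noises on such chaos elements is: $a^0_i(t)$ prolongs an integral by an extra $dA^i$ factor on $[0,t]$ (creation); $a^i_0(t)$ contracts it by integrating out the innermost $dA^i$ factor against $ds$ (annihilation); $a^i_j(t)$ exchanges a $dA^i$ factor for a $dA^j$ factor up to time $t$.

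The key step is to compute $\rM_{A^i_t} F$ for $F$ in a dense set of chaos elements. By the classical Itô integration-by-parts formula,
$$
A^i_t F_t = \int_0^t A^i_{s-}\, dF_s + \int_0^t F_{s-}\, dA^i_s + [A^i, F]_t\,.
$$
The second term produces an iterated integral of $F$ prolonged by a $dA^i$ factor and thus corresponds, in the Fock picture, to $a^0_i(t)$ acting on $U_A F$. The quadratic covariation term is then unfolded using the structure equation
$$
[A^i, A^j]_t = \delta_{ij}\, t + \sum_k N^{ij}_k\, A^k_t\,;
$$
the $\delta_{ij}\, t$ part annihilates the innermost integration variable matching $i$ and corresponds to the annihilation noise $a^i_0(t)$, while the $\sum_k N^{ij}_k\, A^k_t$ part replaces a $dA^j$ factor by $dA^k$ and hence corresponds to $\sum_{j,k} N^{ij}_k\, a^j_k(t)$. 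Putting these contributions together yields exactly the announced formula, with the convention $N^{ij}_0 = \delta_{ij}$ absorbing the creation and annihilation terms into the compact form $\sum_{j,k=0}^N N^{ij}_k\, a^j_k(t)$.

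A more streamlined route, which I would actually prefer, bypasses the explicit chaos manipulation by a uniqueness-in-law argument. Define the Fock-space operators
$$
B^i_t \ :=\ \sum_{j,k=0}^N N^{ij}_k\, a^j_k(t)
$$
on the exponential domain. Using the quantum Itô table for the $da^j_k$'s together with the three symmetries (\ref{E:sym1}), (\ref{E:sym2}), (\ref{E:sym3}) of the doubly-symmetric 3-tensor $N$, one verifies that the family $(B^i_t)_{t \geq 0}$ consists of operators that are (essentially) self-adjoint, commute for each fixed $t$, and satisfy
$$
[B^i, B^j]_t = \delta_{ij}\, t + \sum_{k=1}^N N^{ij}_k\, B^k_t\,.
$$
By the Stone-type spectral theorem these commuting self-adjoint families are unitarily equivalent to multiplication by a classical process that solves the same real structure equation as $A$; by Theorem \ref{T:AE2} this process has the same law as $A$, and uniqueness of the CRP isomorphism then identifies it with $U_A\, \rM_{A^i_t}\, U_A^*$.

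The main obstacle in either approach is analytic rather than algebraic: the $B^i_t$'s are unbounded operators, so one must carefully choose a common invariant core (the exponential domain $\rD$ used in \cite{Att} is a natural candidate), justify the term-by-term application of the quantum Itô formula on this core, and check essential self-adjointness and commutativity there. Once these domain issues are handled, the algebraic identification is a direct bookkeeping of the three types of contributions (creation, annihilation, exchange) matched against the three pieces of the structure equation.
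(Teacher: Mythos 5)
The paper does not actually prove Theorem \ref{T:mult_cont}; it is explicitly recalled from the reference \cite{A-P2} (``Finally recall the representation of the multiplication operators...''), so there is no in-paper proof to compare against. Your first route --- expanding $A^i_t F$ by the classical It\^o product formula on chaos elements, unfolding the bracket $[A^i, F]_t$ via the structure equation, and matching the resulting terms against the actions of $a^0_i(t)$, $a^i_0(t)$ and $a^j_k(t)$ on the Fock picture --- is exactly the strategy used in \cite{A-P2}, and it is the natural continuous-time analogue of Theorem \ref{T:multop}. One bookkeeping point to be careful about: in the decomposition $A^i_t F = \int_0^\infty A^i_{t\wedge u}\, dF_u + \int_0^t F_{u-}\, dA^i_u + [A^i, F]_t$, the creation part $a^0_i(t)$ does \emph{not} come from the second term alone. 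The creation operator inserts the new $dA^i$ factor at every possible position inside the iterated integrals, so one must extract the top-chaos contributions from \emph{both} of the first two integrals (the first one, once $dF_u$ is expanded by predictable representation and the process $A^i_{t\wedge u}$ is itself written as $\int_0^{t\wedge u} dA^i_v$, supplies the insertions at inner positions); meanwhile the lower-order contributions hiding in the first term conspire with $[A^i,F]_t$ to produce the annihilation and exchange parts. Attributing $a^0_i(t)$ to one term and all the rest to the bracket, as you did, is an oversimplification that would not close if carried out literally.

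Your second, operator-theoretic route is an interesting alternative but leaves more genuine gaps than you acknowledge. Verifying that the family $B^i_t = \sum_{j,k} N^{ij}_k\, a^j_k(t)$ is essentially self-adjoint and mutually commuting --- for all $i$ and all pairs of times $s,t$, not merely at a fixed $t$ --- on a common invariant core is nontrivial even with the symmetry of the real tensor $N$. More importantly, uniqueness in law of the classical solution to the structure equation does not by itself yield the operator identity $U_A \rM_{A^i_t} U_A^* = B^i_t$: you would also need to argue that the vacuum vector is cyclic for the commutative von Neumann algebra generated by the $B^i_t$'s, so that the GNS representation with the vacuum state is unitarily equivalent to $L^2(\O',\rF',\PP')$ with the CRP isomorphism, and that this intertwiner is precisely $U_A$. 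That is a real piece of work, essentially the content of the chaotic representation property for the limit process. The first route, once the bookkeeping is fixed, is both more elementary and more faithful to what the cited reference does.
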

Once this is recalled, the rest is now rather easy. We can prove the convergence theorem for the multiplication operators.

\begin{theorem}
The operators of multiplication $\rM_{Z^h_t}$, acting of $\Phi$, converge strongly on $\rD$ to the operators 
\begin{equation}\label{E:rY}
\rZ_t=\sum_{j,k=0}^n M^{ij}_k\, a^j_k(t)\,.
\end{equation}
These operators are the operators of multiplication by $Z$ the complex martingale satisfying
\begin{equation}\label{E:ESY}
[Z^i\,,Z^j]_t=M^{ij}_0\, t+\sum_{k=1}^N M^{ij}_k\, Z^k_t
\end{equation}
and
\begin{equation}\label{E:ESY2}
[\ol{Z^i}\,,Z^j]_t=\d_{ij}\, t+\sum_{k=1}^N \ol{M^{ik}_l}\, Z^k_t\,.
\end{equation}\end{theorem}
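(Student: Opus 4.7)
The strategy is to combine the discrete multiplication operator formula of Theorem~\ref{T:multop} with the strong convergence of renormalized basic operators from \cite{Att}, and then to identify the limit as multiplication by $Z$ via the real-case reduction of Theorem~\ref{T:normalC}.

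First, since $(Z^h_t)^i = \sum_{nh \leq t} \sqrt h\, X^i_n(h)$ and each $\rM_{X^i_n(h)}$ acts only on the $n$-th tensor factor of $T\Phi(h)$, applying Theorem~\ref{T:multop} copy-by-copy yields the key algebraic identity
\begin{equation*}
\rM_{(Z^h_t)^i} \;=\; \sum_{j,k=0}^N \bigl(h^{1/2 - e^j_k}\, S^{ij}_k(h)\bigr) \sum_{nh \leq t} h^{e^j_k}\, a^j_k(nh),
\end{equation*}
where the renormalization factor $h^{e^j_k}$ has been inserted on the operator side precisely so that, by \cite{Att}, the operator factor converges strongly on $\rD$ to the quantum noise $a^j_k(t)$. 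It thus remains to analyze the scalar coefficient $h^{1/2 - e^j_k}\, S^{ij}_k(h)$ as $h \to 0$.

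A case analysis, using the identity $S^{i0}_k = \delta_{ik}$ at the boundary and Lemma~\ref{limits} in the interior, shows that this coefficient equals $0$ for $(j,k) = (0,0)$; is the constant $\delta_{ik}$ for $(j,k) = (0,k)$ with $k \geq 1$, producing the creation contribution $a^0_i(t)$ after summation over $k$; tends to $M^{ij}_0$ for $(j,k) = (j,0)$ with $j \geq 1$; and tends to $M^{ij}_k$ for $j,k \geq 1$. Since in each case the scalar is uniformly bounded in $h$, term-by-term strong convergence on $\rD$ is justified and gives
\begin{equation*}
\rZ^i_t \;=\; a^0_i(t) + \sum_{j=1}^N M^{ij}_0\, a^j_0(t) + \sum_{j,k=1}^N M^{ij}_k\, a^j_k(t),
\end{equation*}
which is formula (\ref{E:rY}) under the natural boundary extension of the symbol $M^{ij}_k$.

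To identify $\rZ^i_t$ with the multiplication operator by $Z^i_t$, I would invoke Theorem~\ref{T:normalC} to write $Z = U\,Y$ for some unitary $U$ on $\CC^N$ and some real normal martingale $Y$ with doubly-symmetric 3-tensor $T$, so that $M = U \circ T$. Applying Theorem~\ref{T:mult_cont} to each $\rM_{Y^m_t}$ and using the linearity $\rM_{Z^i_t} = \sum_m u_{im}\, \rM_{Y^m_t}$ provides an explicit Fock-space expression for $\rM_{Z^i_t}$. Comparing it with $\rZ^i_t$ then reduces, via the expansion $M^{ij}_k = \sum_{m,n,p} u_{im}\, u_{jn}\, \ol{u_{kp}}\, T^{mn}_p$ and the intertwining of the quantum noises by the second quantization $\Gamma(U)$ of $U$ acting on $L^2(\Rp; \CC^N)$, to a verification that the creation, annihilation and exchange coefficients match.

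The principal obstacle lies in this final identification: the computational convergence to $\rZ^i_t$ follows cleanly from \cite{Att} once the coefficient analysis is completed, but matching the boundary coefficients (creation and annihilation parts) produced by the approximation scheme with those emerging after unitary conjugation of the real multiplication formula requires care. One must track how the second-quantized unitary $\Gamma(U)$ transports the quantum noises, and confirm that the complex symmetric matrix $\L = UU^t$, which encodes the coefficients $M^{ij}_0$ in the structure equation (\ref{E:ESY}), arises consistently on both sides of the identification.
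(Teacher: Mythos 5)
Your proof takes the same route as the paper: apply the discrete multiplication-operator formula of Theorem~\ref{T:multop} copy-by-copy, pass to the limit term-by-term via the convergence results of \cite{Att}, and then identify the limit operator as $\rM_{Z^i_t}$ by reducing to a real normal martingale $Y$ through $Z = UY$ and invoking Theorem~\ref{T:mult_cont}; the final identification you flag as the ``principal obstacle'' is precisely what the paper delegates to the argument of Proposition~\ref{P:change_opmult}, and your $\Gamma(U)$-intertwining plan is its correct continuous-time form. Your coefficient bookkeeping is if anything cleaner than the paper's (which swaps some index labels in its case analysis), and you are right that the creation term $a^0_i(t)$ comes from the constant boundary coefficients $S^{i0}_k = \delta_{ik}$ rather than from $M^{i0}_k$ (which Lemma~\ref{limits} sends to zero), so the compact formula (\ref{E:rY}) must indeed be read with the boundary convention you describe.
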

\begin{proof}
The convergence toward the operator $\rZ_t$ given by (\ref{E:rY}) is a simple application of the convergence theorems of \cite{Att}, let us detail the different cases.

If $j,k\not=0$, we know that $\sqrt h\, S^{ij}_k$ converges to $M^{ij}_k$ and by \cite{Att} we have that 
$\sum_{m=1}^{\left[t/h\right]} a^j_k(m)$ converges to $a^j_k(t)$.

If $j=0$ and $k\not=0$, we know that $S^{ij}_0$ converges to $M^{ij}_0$ and that 
$\sum_{m=1}^{\left[t/h\right]} \sqrt h\, a^j_0(m)$ converges to $a^j_0(t)$.

If $k=0$ and $j\not =0$, we know that $S^{i0}_k$ converges to $M^{i0}_k$ (actually their are all equal to $\d_{ik}$) and that 
$\sum_{m=1}^{\left[t/h\right]} \sqrt h\, a^0_k(m)$ converges to $a^0_k(t)$.

\smallskip
The fact that $\rZ_t$ is indeed the multiplication operator by the announced normal martingale comes as follows. The martingale $Z$ is the image $UA$, under a unitary operator $U$ of some real normal martingale $A$. The 3-tensor $M$ is the image $U\circ N$, under the unitary operator $U$, of some real tensor $N$. The real normal martingale $A$ associated to the real 3-tensor $N$ has its multiplication operator equal to
$$
U_A\, \rM_{A^i_t}\, U_A^*=\sum_{j,k=0}^N N^{ij}_k\, a^j_k(t)
$$
by Theorem \ref{T:mult_cont}. As $Z_t$ is equal to $UA_t$ its canonical space is the same as the one of $A$, only the canonical isomorphism is modified by a change of basis. The rest of the proof is then exactly similar to the one of Proposition \ref{P:change_opmult}.
\end{proof}

\section{Examples}

We shall detail 2 examples in dimension 2, showing up typical different behaviors.

\bigskip
 The first one is the one we have followed along this article, let us recall it. We are given an obtuse random variable $X$ in $\CC^2$ taking the values
$$
v_1=\left(\begin{matrix}i\\1\end{matrix}\right)\ ,\qq 
v_2=\left(\begin{matrix}1\\-1+i\end{matrix}\right)\ ,\qq 
v_3=-\frac15\left(\begin{matrix}3+4i\\1+3i\end{matrix}\right)
$$
with probabilities $p_1=1/3$, $p_2=1/4$ and $p_3=5/{12}$ respectively. Then the 3-tensor $S$ associated to $X$ is given by 
\begin{align*}
S^0&=\left(\begin{matrix}1&0&0\\\ecarte 0&1&0\\\ecarte0&0&1\end{matrix}\right)\,,
\qq S^1=\left(\begin{matrix}0&1&0\\\ecarte-\frac15(1-2i)&0&-\frac25(2+i)\\\ecarte
-\frac25(1-2i)&0&\frac15(2+i)\end{matrix}\right)\\
S^2&=\left(\begin{matrix}0&0&1\\\ecarte-\frac25(1-2i)&0&\frac15(2+i)\\\ecarte
\frac15(1-2i)&-i&-\frac15(1-2i)\end{matrix}\right)\,.
\end{align*}

Now, considering the random walk 
$$
Z^h_{nh}=\sum_{i=1}^n \sqrt{h}\, X_n\,,
$$
where $(X_n)_{n\in\NN}$ is a sequence of i.i.d. random variables with same law as $X$, the continuous-time limit of $Z^h$ is the normal martingale in $\CC^2$ with associated tensor given by the limits of Lemma \label{limits}. Here we obtain, for all $i,j,k=1,2$
$$
M^{ij}_k=0
$$
and 
$$
M_0=\left(\begin{matrix}-\frac15(1-2i)&-\frac25(1-2i)\\\\-\frac25(1-2i)&\frac15(1-2i)\end{matrix}\right)\,.
$$
The limit process $(Z_t)_{t\in\Rp}$ is a normal martingale in $\CC^2$, solution of the structure equations
\begin{align*}
[Z^i\,,\,Z^j]_t=M^{ij}_0\, t\\
[\ol{Z^i}\,,\,Z^j]_t=\d_{ij}\, t\,.
\end{align*}
It is then rather easy to find a unitary matrix $V$ such that $V\,V^t=M_0$, we find
$$
V=\left(\begin{matrix}\frac{2+i}{\sqrt{10}}&\frac{i}{\sqrt{2}}\\\\
\frac{-1+2i}{\sqrt{10}}&\frac{1}{\sqrt{2}}\end{matrix}\right)\,,
$$
for example. Following our results on complex normal martingales, this means that the process $Z$ has the following distribution: given a 2-dimensional real Brownian motion $W=(W^1\,,\,W^2)$ then 
$$
\begin{cases}
Z^1_t&=\frac{2+i}{\sqrt{10}}\, W^1_t+\frac{i}{\sqrt{2}}\, W^2_t\\\\
Z^2_t&=\frac{-1+2i}{\sqrt{10}}\, W^1_t+\frac{1}{\sqrt{2}}\, W^2_t\,.
\end{cases}
$$

\bigskip
For the second example, we consider a fixed parameter $h>0$. We consider the obtuse random variable $X(h)$ in $\CC^2$ whose values are
$$
v_1=\frac{1}{\sqrt 2}\left(\begin{matrix} i\\\ecarte 1\end{matrix}\right)\ ,\qq 
v_2=\frac{1}{\sqrt{2h}}\left(\begin{matrix}1-i\sqrt{h}\\\ecarte{i-\sqrt{h}}\end{matrix}\right)\ ,\qq 
v_3=-\frac{1}{\sqrt{2}}\left(\begin{matrix}{2\sqrt{h}+i}\\\ecarte{1+2i\sqrt h}\end{matrix}\right)
$$
with probabilities $p_1=1/2$, $p_2=h/(1+2h)$ and $p_3=1/(2+4h)$ respectively. Then the 3-tensor $S$ associated to $X$ is given by the following, where we have only detailed the leading orders in $h$
\begin{align*}
S^0&=\left(\begin{matrix}1&0&0\\\ecarte 0&1&0\\\ecarte0&0&1\end{matrix}\right)\,,
\qq S^1=\left(\begin{matrix}0&1&0\\\ecarte0&\frac{1}{2\sqrt{2h}}+O(1)&\frac{-i}{2\sqrt{2h}}+O(1)\\\ecarte
i&\frac{i}{2\sqrt{2h}}+O(1)&\frac{1}{2\sqrt{2h}}+O(1)\end{matrix}\right)\\
S^2&=\left(\begin{matrix}0&1&0\\\ecarte i&\frac{i}{2\sqrt{2h}}+O(1)&\frac{1}{2\sqrt{2h}}+O(1)\\\ecarte
0&\frac{-1}{2\sqrt{2h}}+O(1)&\frac{i}{2\sqrt{2h}}+O(1)\end{matrix}\right)\,.
\end{align*}

The renormalized 3-tensor converges to the 3-tensor
\begin{align*}
M^1&=\frac{1}{2\sqrt2}\,\left(\begin{matrix} 1&-i\\\ecarte i&1\end{matrix}\right)\\
M^2&=\frac{1}{2\sqrt2}\,\left(\begin{matrix} i&1\\\ecarte -1&i\end{matrix}\right)
\end{align*}
and the matrix
$$
M^0=\left(\begin{matrix} 0&i\\\ecarte i&0\end{matrix}\right)\,.
$$
In order to diagonalize the 3-tensor, we solve
$$
(M^{ij}_1x+M^{ij}_2y)_{i,j=1,2}=\left(\begin{matrix}x\\\ecarte y\end{matrix}\right)\otimes\left(\begin{matrix}x\\\ecarte y\end{matrix}\right)=\left(\begin{matrix}x^2&xy\\\ecarte xy&y^2\end{matrix}\right)\,.
$$
There is a unique solution 
$$
v=\frac{\sqrt2}2\, \left(\begin{matrix}1\\\ecarte i\end{matrix}\right)\,.
$$
This means that the continuous-time limit process $Z$ is a compensated Poisson process in the direction $v$. 

This is all for the information which is given by the 3-tensor. If we want to know the direction where the process is Brownian, we need to look at the decomposition of $M^0$ as $V\, V^t$ for a unitary $V$. We easily find
$$
V=\frac 12\,\left(\begin{matrix}1+i&1-i\\\ecarte 1+i&1-i\end{matrix}\right)\,.
$$
This unitary operator is the one from which has been rotated a real Brownian motion in order to land in the orthogonal space of $\left(\begin{matrix}i\\1\end{matrix}\right)$. That is we seek for a direction $w$ in $\RR^2$ such that $V\,w$ is proportional to $\left(\begin{matrix}1\\i\end{matrix}\right)$. We easily find $ w=\left(\begin{matrix}1\\1\end{matrix}\right)$ and the process $Z$ is a Brownian motion in the direction $\left(\begin{matrix}i\\1\end{matrix}\right)$.

The process $Z$ is finally described as follows, let $N$ and $W$ be a standard Poisson process and a Brownian motion, respectively, independant of each other. Then
$$
\begin{cases}
Z^1_t&=\frac{1}{\sqrt{2}}\, (N_t-t)+i W_t\\\\
Z^2_t&=\frac{i}{\sqrt{2}}\, (N_t-t)+\, W_t\,.
\end{cases}
$$

\bigskip
By choosing an example with two directions whose probabilities are of order $h$ and one direction's probability is of order $1-2h$, we shall end up with a 3-tensor $M$ that can be completely diagonalized and a process which is made of two compensated Poisson processes on two orthogonal directions of $\CC^2$ (cf the example at the end of \cite{A-P2} for an example in $\RR^2$).

\bigskip
\begin{multicols}{3}
\begin{minipage}{0.4\textwidth}
{\timesept St\'ephane ATTAL
\vskip -1mm
 Universit\'e de Lyon
\vskip -1mm
Universit\'e de Lyon 1, C.N.R.S.
\vskip -1mm
Institut Camille Jordan
\vskip -1mm
21 av Claude Bernard
\vskip -1mm
69622 Villeubanne cedex, France}
\end{minipage}
\columnbreak
\begin{minipage}{0.4\textwidth}
{\timesept Julien DESCHAMPS
\vskip -1mm
Dipartimento di Matematica
\vskip -1mm
Universitˆ degli Studi di Genova
\vskip -1mm
Via Dodecaneso, 35
\vskip -1mm
16146 Genova - ITALIA}
\end{minipage}
\columnbreak
\begin{minipage}{0.4\textwidth}
{\timesept Cl\'ement PELLEGRINI
\vskip -1mm
Institut de Math\'ematiques de Toulouse 
\vskip -1mm
Laboratoire de Statistique et de Probabilit\'e
\vskip -1mm
Universit\'e Paul Sabatier (Toulouse III)
\vskip -1mm
31062 Toulouse Cedex 9, France}
\end{minipage}
\end{multicols}

\end{document}